\documentclass[12pt, reqno]{amsart}
    \usepackage{amsmath, amsthm, amssymb, stmaryrd}
    \usepackage[normalem]{ulem}
    \usepackage{enumerate}
    \usepackage{url}
    \usepackage{comment}
    \usepackage[centering, margin={1in, 0.5in}, includeheadfoot]{geometry}
    \usepackage{tikz}
    \usetikzlibrary{arrows}
    \usepackage{pgfplots}
    \usepackage{color}
    \usepackage{xcolor}
    \definecolor{myred}{rgb}{0.2,0,0}
    \definecolor{myblue}{rgb}{0,0,0.6}
    \definecolor{mygreen}{rgb}{0,0.2,0}
    \usepackage[colorlinks=true,linkcolor=myred,citecolor=myblue,urlcolor=mygreen]{hyperref}
    \usepackage{enumitem}
    \usepackage{bm}
    \usepackage{bbm}

    \theoremstyle{plain}
    \newtheorem{thm}{Theorem}[section]
    \newtheorem{lemma}[thm]{Lemma}
    \newtheorem{prop}[thm]{Proposition}
    \newtheorem{cor}[thm]{Corollary}
    
    \newtheorem*{conjecture*}{Conjecture}
    
    \newtheorem{prob}{Problem}
    \newtheorem*{claim*}{Claim}
    \newtheorem{defn}[thm]{Definition}

    \theoremstyle{remark}
    
    \newtheorem*{remark*}{Remark}

    \numberwithin{equation}{section}
    \newcommand{\supp}{{\mathrm{supp}\,}}
    \newcommand{\hidden}[1]{}
    \newcommand{\floor}[1]{\left\lfloor #1 \right\rfloor}
    \newcommand{\ceil}[1]{\left\lceil #1 \right\rceil}

    \newcommand{\norm}[1]{\left\| #1 \right\|}

    \newcommand{\Id}{\mathbbm{1}}

    \def\alp{{\alpha}} 
    \def\bet{{\beta}}  
    \def\gam{{\gamma}}

    \def\lam{{\lambda}}

    \def\sig{{\sigma}}

    \def\eps{\varepsilon}
    \def\implies{\Rightarrow}
    \def\l{\ell}
    \def\le{\leqslant} \def\ge{\geqslant}
    \def\ls{\leqslant} \def\gs{\geqslant}
    \def\leq{\leqslant}
    \def\geq{\geqslant}

    \def \sig{{\sigma}}

    \def \bE {\mathbb E}
    
    \def \bG {\mathbb G}
    
    \def \bI {\mathbb I}

    \def \bN {\mathbb N}
    \def \bP {\mathbb P}
    \def \bQ {\mathbb Q}
    \def \bR {\mathbb R}
    \def \bZ {\mathbb Z}

    \def \N {\mathbb N}

    \def \cI {\mathcal I}
    
    \def \cK {\mathcal K}
    
    \def \cM {\mathcal M}

    \def \cP {\mathcal P}

    \def \cT {\mathcal T}

    \def \dim {\mathrm{dim}}

    \def \deg {\mathrm{deg}}

    \def \Bad {{\mathrm{\mathbf{Bad}}}}
    
    \def \dimH {\dim_{\mathrm{H}}}

    \def \NN {(N_n)_{n\in\bN}}

    \newcommand{\mfrac}[2]{\raisebox{0.15ex}{\scalebox{1.1}{$\frac{#1}{#2}$}}}

\begin{document}
    \title[Circle coverings driven by arithmetic sequences]{Circle coverings driven by arithmetic sequences: a percolation approach to Diophantine approximation and fractal intersections}
    \subjclass[2010]{}
    \keywords{}

\author[Hauke]{Manuel Hauke}
\address{Institute of Analysis and Number Theory, TU Graz, Austria}
\email{hauke@math.tugraz.at}

\author[Shubin]{Andrei Shubin}
\address{Institute of Analysis and Number Theory, TU Graz, Austria}
\email{shubin@math.tugraz.at}

\author[Stefanescu]{Eduard Stefanescu}
\address{Institute of Analysis and Number Theory, TU Graz, Austria}
\email{eduard.stefanescu@tugraz.at}

\author[Zafeiropoulos]{Agamemnon Zafeiropoulos}
\address{Institute of Analysis and Number Theory, TU Graz, Austria}
\email{zafeiropoulos@math.tugraz.at}
    
    % \author{Manuel Hauke \and Andrei Shubin \and Eduard Stefanescu \and  Agamemnon Zafeiropoulos}
    % \address{Institute of Analysis and Number Theory, TU Graz, Austria}
    % \email{hauke@math.tugraz.at}
    % \address{Institute of Analysis and Number Theory, TU Graz, Austria}
    % \email{shubin@math.tugraz.at}
    % \address{Institute of Analysis and Number Theory, TU Graz, Austria}
    % \email{eduard.stefanescu@tugraz.at}
    % \address{Institute of Analysis and Number Theory, TU Graz, Austria}
    % \email{zafeiropoulos@math.tugraz.at}

    \begin{abstract}  
        
    We study problems on covering $[0,1)$ by shrinking intervals centered at the points $\{q_n x\}$, where $(q_n)_{n\in \N}$ is a given real-valued sequence and $x \in [0,1)$ is random.
    
    First, for real-valued lacunary sequences $(q_n)_{n\in\bN}$, we show that the covering radius $\mfrac{1}{n}$ is sharp up to a constant: there exist $C>c>0$ such that, for Lebesgue-a.e.\ $x$, the intervals of length $\mfrac{C}{n}$ cover $[0,1)$ infinitely often, while this fails for intervals  of length $\mfrac{c}{n}$.   Moreover, the lower bound holds for certain sub-lacunary rates and the results partially extend to all probability measures with sufficiently fast Fourier decay. 
    
    As an application, we obtain a new bound for a variant of the inhomogeneous Littlewood--Cassels problem: for any $\alpha\in\Bad$ and $\gamma\in\bR$, there exists a set of $\beta\in\Bad$ of full Hausdorff dimension such that
    \[
        \norm{n\alpha-\gamma}\,\norm{n\beta-\delta}<\frac{C}{n\log n} \qquad \text{for infinitely many } n\gs 1,
    \] uniformly in $\delta\in\bR$. This improves upon previous works of Haynes--Jensen--Kristensen, Chow--Technau, and the third author, and is best possible when one restricts to best approximations of the first factor.

    Second, under certain arithmetic restrictions on $(q_n)_{n\in\N}$, we compute the almost-sure Hausdorff dimension of limsup sets generated by intervals of size $\mfrac{1}{n^{\nu}}$ for $\nu \ge 1$, centered at $\{q_n x\}$, and intersected with Ahlfors regular compact sets such as the middle-third Cantor set. In particular, our results apply to all real-valued lacunary sequences, to integer-valued polynomials, and to powers of primes. This substantially extends the work of Bugeaud and Durand, which applies only to certain super-lacunary integer-valued sequences.

    The proofs are based on a variant of a random coloring of a binary tree, combined with moments of exponential sums.

    \end{abstract}
    
    \maketitle

    %SECTION 1: Introduction
    
    %%%%%%%%%%%%%%%%%%%%%%%%%%%%%%%%%%%%%%%%%%%%%%%%%%%%%%%%%%%%
    
    \section{Introduction} \label{sec_intro}
    
    %\raggedbottom
    
    \subsection*{Random covering}
    
 In 1956, Dvoretzky~\cite{dvoretzky} raised a question about the covering of the circle $\bR/\bZ$ by shrinking arcs with random centers. Precisely, let $(\omega_n)_{n\in\bN}$ be i.i.d.~random points chosen uniformly from $\bR/\bZ$, and let $(\ell_n)_{n\in\bN}$ be a non-increasing sequence of lengths $1\ge \ell_1\ge \ell_2\ge \cdots$. Does there exist a necessary and sufficient condition on $(\ell_n)_{n\in\N}$ such that every point $\delta\in\bR/\bZ$ belongs to the shrinking arcs $I_n := (\omega_n-\mfrac{\ell_n}{2},\, \omega_n+\mfrac{\ell_n}{2})$ for infinitely\footnote{In the literature, Dvoretzky's problem is sometimes formulated as a \textit{one-time} covering problem: covering holds if, almost surely, every $\delta\in\bR/\bZ$ satisfies $\delta\in I_n$ for \textit{at least one $n$}, rather than for \textit{infinitely many~$n$}. However, when $\ell_1<1$, one implies the other: if $\bR/\bZ$ is covered almost surely, then it is covered almost surely \textit{infinitely often}~\cite[Remark~1]{shepp}. In this work, we are concerned only with infinite covering.} many $n$ almost surely?

Observe that the same question becomes easier when $\delta\in\bR/\bZ$ is \textit{fixed}: in this case, the condition
\begin{equation}\label{necessary_cond}
    \sum_{n=1}^{\infty}\ell_n=\infty
\end{equation} is both necessary and sufficient by the first and second Borel--Cantelli lemmas. Moreover, combining this with Fubini's theorem, one readily deduces that~\eqref{necessary_cond} is also necessary and sufficient for covering \textit{almost all} $\delta\in\bR/\bZ$ with probability one.
In particular, an infinite almost-covering holds for $\ell_n=\mfrac{1}{n\log n}$ immediately. But the problem of covering \textit{all}~$\delta$ (Dvoretzky's problem) is significantly harder.

Partial progress towards Dvoretzky's problem was made by L\'evy, Kahane, Erd\H{o}s, Billard, Orey, and Mandelbrot~\cite{billard, kahane68, orey, mandelbrot1, mandelbrot2, erdos_problems}. In particular, their results already showed that $\ell_n=\mfrac{1}{n}$ is a covering case, whereas $\ell_n=\mfrac{1-\eps}{n}$ is not, in stark contrast to classical Khintchine-type theorems where a constant invariance principle holds. The problem was fully solved in 1972 by Shepp~\cite{shepp}, who proved that the necessary and sufficient condition for covering is
\[
    \sum_{n=1}^{\infty}\frac{1}{n^2}\exp\big(\ell_1+\cdots+\ell_n\big)=\infty.
\] For example, Shepp's criterion implies that $\ell_n=\mfrac{1}{n}-\mfrac{1}{n\log n}$ is a covering case, while $\ell_n=\mfrac{1}{n}-\mfrac{1}{n(\log n)^{1-\eps}}$ is not. Shepp's work has since been generalized in several directions, including different limsup sets, metric spaces, and various probability distributions. For further references, see~\cite{kahane, FK, FJJS}.

Dirichlet's approximation theorem implies that for every $\delta\in[0,1)$ there exist infinitely many reduced rationals $\mfrac{a}{q}$ such that $\|\delta-\mfrac{a}{q}\|\ls \mfrac{1}{q^2}$ (here $\|x\|$ denotes the distance from $x\in\bR$ to the nearest integer). This may be viewed as a covering statement with centers given by the Farey fractions $\omega_{a,q}=\mfrac{a}{q}$ and interval lengths $\ell_{a,q}=\mfrac{2}{q^2}$. One can order the Farey fractions so that the associated sequence $(\ell_{a,q})$ is non-increasing. It is then straightforward to verify that if $(\ell_n)_{n\in\bN}$ is any such re-ordering, one has
\[
    \sum_{n=1}^{\infty} \frac{1}{n^2} \exp \Big( \sum_{k \le n} \ell_{n} \Big) < \infty.
\] Thus, this deterministic sequence (viewed as a particular realization of the random centers) does not satisfy Shepp's criterion. Nevertheless, covering holds, since the Farey fractions have a highly rigid structure compared with a generic sequence. Random covering may therefore be seen as a random analogue of Dirichlet's theorem. We also mention that a more precise random analogue was studied recently in~\cite{KLP}.

\subsection*{Metric covering} In this paper we study an analogue of Dvoretzky's covering problem for intervals centered at points $\{q_n x\}$, where $(q_n)_{n\in\N}$ is a given real-valued sequence and~$x$ is chosen randomly with respect to a given probability measure, e.g. the Lebesgue measure on $[0,1)$. Here $\{x\} := x-\floor{x}$ denotes the fractional part of $x$.

We first briefly analyze the situation when $x$ is \textit{fixed}. The \textit{Kronecker sequence} $\{nx\}$ is a well-known example of a highly rigid sequence in $[0,1)$. Interpreting the points $\{nx\}$ as centers of shrinking intervals, covering clearly fails for any $\ell_n\to 0$ if $x\in\bQ$. If $x\notin\bQ$, the theorem of Khintchine~\cite[Theorem~10.2]{Hua} gives for any $\eps > 0$
\[
    \norm{nx-\delta}\le \frac{1+\eps}{\sqrt{5} n} \qquad \text{for infinitely many } n\gs 1
\] uniformly in $\delta\in[0,1)$. Note that this covering length is also below the threshold in Shepp's criterion.
The highly rigid structure of the points $\{x\},\{2x\},\ldots,\{Nx\}$ can also be seen, for instance, via the three-gap theorem. 

It seems that the covering problem becomes harder when one considers sparser sequences of integers, such as squares or higher powers, or non-integer sequences, where a more random behavior is expected.

\begin{prob}
Under what conditions on the lengths $(\ell_n)_{n\in\N}$ and on $x\in[0,1)$ does covering hold with the interval centers $\omega_n=\{n^2 x\}$? In particular, for which $x$ does there exist $C>0$ such that covering holds with $\ell_n=\mfrac{C}{n}$? What about $\omega_n=\{n^\theta x\}$ with other values of $\theta>0$?
\end{prob}

In contrast, in the metric setting (e.g. for \textit{Lebesgue-generic $x$}) the covering problem with centers $\{q_n x\}$ is expected to become easier the faster the sequence $(q_n)_{n\in\bN}$ grows. While the centers are no longer independent as in Dvoretzky's problem --- for instance, the location of $\{q_n x\}$ influences the location of $\{q_{n+1} x\}$ --- this dependence weakens as $q_n$ grows faster, and the question typically gets easier.

Recall that a real-valued sequence $(q_n)_{n\in\N}$ is called \textit{lacunary} if it satisfies the Hadamard gap condition 
\[
    \frac{q_{n+1}}{q_n}\ge r \qquad \text{ for all }  n\gs 1,
\] for some fixed $r>1$. Lacunary dilates $\{q_n x\}$ with Lebesgue-uniform $x$ are known to behave similarly to i.i.d.~points in many regards.  For a comprehensive overview of their random statistics, see~\cite{ABT}. In fact, the resemblance of the points $\{q_nx\}$ with i.i.d. random variables on $[0,1)$ has been observed when $x$ is chosen randomly with respect to some measure $\mu$ such that its Fourier transform \[\widehat{\mu}(t):=\int_0^1 e(tu)\,d\mu(u), \qquad t\in \bR \] exhibits sufficiently fast decay, see e.g. \cite{M0,tz}. Our first result shows that the covering property holds for any lacunary sequence at the correct order of interval lengths for all measures with polynomial Fourier decay.

    \begin{thm} \label{thm1}
    
    Let $(q_n)_{n\in\N}$ be a real-valued lacunary sequence. Suppose that $\mu$ is a probability measure on $[0,1)$ satisfying
    \begin{equation} \label{rajchman_cond}
        \widehat{\mu}(t) \ll \frac{1}{(1+|t|)^{\eta}},
        \qquad |t|\to\infty
    \end{equation} for some $\eta>0$. Then there exists a constant $C>0$, depending on $(q_n)_{n\in\N}$, such that for $\mu$-almost all $x\in[0,1)$ one has
    \[
        \norm{q_n x-\delta}<\frac{C}{n} \qquad \text{for infinitely many } n\gs 1,
    \] uniformly in $\delta\in[0,1)$.
    \end{thm}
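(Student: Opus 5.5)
We work at dyadic scales and reduce the covering statement to a discrete problem on a binary tree of dyadic intervals. Fix a large $k$ and consider the block of indices $n\in(N_{k-1},N_k]$ with $N_k=2^k$. It suffices to show that, for $\mu$-a.e.\ $x$ and all large $k$, every $\delta\in[0,1)$ is within $C2^{-k}$ of some $\{q_nx\}$ with $n\in(2^{k-1},2^k]$. Then $\norm{q_nx-\delta}<C2^{-k}\le C/n$ for some $n$ in every block, which gives infinitely many $n$. Partition $[0,1)$ into $M=2^{k}/C'$ intervals $J$ of length $C'2^{-k}$. Covering holds once every $J$ contains a point $\{q_nx\}$ from the block. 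The property fails only if some $J$ is empty, so we need
\[
\sum_k \mu\bigl(\{x:\ \exists J,\ \#\{n\in(2^{k-1},2^k]: \{q_nx\}\in J\}=0\}\bigr)<\infty,
\]
and Borel--Cantelli then finishes the argument.

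To bound the probability that a given $J$ is empty, we use lacunarity to extract from the block a sub-block of indices that behave almost independently. Inside $(2^{k-1},2^k]$ we take a sparse subsequence $n_1<n_2<\dots<n_L$, with $L\asymp 2^k/\log(2^k)$, such that $q_{n_{i+1}}/q_{n_i}\ge 2^{Ak}$. This is possible because $q_{n+m}/q_n\ge r^m$, so gaps of size $m\asymp k$ suffice. For such a sparse lacunary family we compute
\[
\mu\Bigl(\bigcap_{i} \{\{q_{n_i}x\}\notin J\}\Bigr).
\]
Smooth $\mathbf 1_{J^c}$ by a trigonometric polynomial of degree about $2^{k}$ and expand the product. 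Every nonconstant frequency $\sum_i h_iq_{n_i}$ with $|h_i|\ll 2^k$ has absolute value at least $\gg q_{n_1}2^{(A-1)k}$, by the huge gap ratio. This is large because real lacunary $q_n$ grow exponentially. The Fourier decay \eqref{rajchman_cond} makes each such term $\ll (q_{n_1}2^{Ak})^{-\eta}$. Summing over all frequency tuples costs a factor of roughly $(2^k)^{L}$, so we cannot expand the whole product at once. Instead we iterate one factor at a time, in a martingale fashion: condition on the scale $q_{n_i}^{-1}$, where $x$ is localized to an interval $I$ of length $q_{n_i}^{-1}$. Since $q_{n_{i+1}}/q_{n_i}\ge 2^{Ak}$ is huge, the distribution of $\{q_{n_{i+1}}x\}$ under $\mu|_I$ is close to uniform. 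We only need this on average over $I$, which follows from a second-moment (exponential sum) estimate using the Fourier decay. The outcome should be
\[
\mu(J\ \text{empty})\le (1-C'2^{-k}/2)^{L}+\text{error}\le \exp(-c\,C'/\log 2^k\cdot\ldots).
\]

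This exposes the main obstacle. With the crude sparsification, $L\cdot|J|\asymp C'/k$ is too small, so independence alone over $L$ terms gives no decay. We therefore cannot discard most indices, and we must use almost all of the $2^{k-1}$ indices, where consecutive ratios are only $\ge r$. This is where the binary-tree coloring enters. Build a dyadic tree of depth $\asymp k$ over $[0,1)$, and at level $j$ use indices $n$ with $q_n\approx 2^{j}\cdot(\text{scale})$. The event that a dyadic subinterval receives a point is then a nearly independent ``coloring'' across levels, with correlations controlled by high moments of the exponential sums $\sum_n e(hq_nx)$. Lacunary sums of this kind have Gaussian-type moment bounds under $\mu$, by \eqref{rajchman_cond} and counting solutions of $\sum \pm h_iq_{n_i}\approx 0$ (of which lacunarity gives few). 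A percolation-style argument then shows that an uncovered branch survives with probability $\le e^{-cC}$ per vertex, and a first-moment count over the $2^k$ leaves gives a summable bound once $C$ is large. I expect the hardest step to be the moment bound that controls dependence at adjacent scales, where consecutive $q_n$ differ only by the factor $r$. I would handle it by grouping indices into blocks of $\asymp\log(1/(r-1))$ consecutive terms, treated as a single level of the tree.
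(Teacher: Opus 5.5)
Your first reduction cannot work. The claim that for all large $k$ every interval $J$ of length $C'2^{-k}$ receives a point $\{q_nx\}$ with $n\in(2^{k-1},2^k]$ is a dispersion bound of order $1/N$. That bound already fails for i.i.d.\ points, whose maximal gap is $\asymp \log N/N$, and for $q_n=2^n$ with Lebesgue measure, where it becomes a coupon-collector problem for binary words. Each $J$ is empty with probability about $e^{-C'/2}$, and there are $2^k/C'$ of them. So the obstacle is not your sparsification: keeping all $2^{k-1}$ indices still leaves a divergent union bound.

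The missing idea is that infinite covering only needs $\delta$ to be hit at \emph{one} of many consecutive scales. In the paper, level $j$ of the dyadic tree receives the next $\asymp C2^j$ consecutive indices, not indices chosen by the size of $q_n$ as in your third paragraph. One then shows that no uncolored path of length $R+1$ starts at an \emph{arbitrary} level $n$. This needs a union bound over $2^{n+R+1}$ paths, a per-path bound $c^{R}$ with $2c<1$, a window length $R\asymp n$ to absorb the $2^n$ starting vertices, and Borel--Cantelli along sparse disjoint windows $[n_i,n_i+R_i]$. If, as your sketch suggests, you count only root-to-leaf paths of a single tree of depth $k$, you get covering once, not infinitely often.

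Second, the per-path bound is the heart of the proof, and you only assert it (``nearly independent coloring across levels''). Your proposed remedy, grouping $\asymp\log(1/(r-1))$ consecutive terms into a level, creates no independence between levels. The within-level issue you single out as hardest is actually cheap: keeping every $(\lfloor \log 10/\log r\rfloor+1)$-th term gives $q_{N+1}/q_N>10$ at the cost of a constant in $C$. Independence across levels comes from deleting buffers of $O(\log N)$ indices between consecutive levels, so that $q_N/q_M\ge N^{100}$ whenever $M,N$ lie on different levels. These deletions are negligible against $\asymp C2^j$ points per level.

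For a fixed path one applies Markov's inequality to $\prod_j|\Sigma_j|^2$, where $\Sigma_j$ is the nonzero-frequency part of the smoothed count of level-$j$ points in the level-$j$ vertex. The Fourier decay of $\mu$ discards all frequency vectors with $\bigl|\sum_j (h_jq_{\ell_j}-k_jq_{m_j})\bigr|\ge q_{N_n}$. The rest are counted level by level: the cross-level gaps force $|k_jq_{m_j}-h_jq_{\ell_j}-B|<q_{N_{j-1}}/4$, with $B$ the already fixed contribution of the later levels. A one-level counting lemma bounds the weighted number of such near-solutions by $20N_j2^j$, uniformly in $B$. This yields a factor $O(2^j/N_j)=O(1/C)$ per level. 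A second moment suffices because one only has to beat the branching factor $2$. Your claimed $e^{-cC}$ per vertex via ``Gaussian-type'' moments is unproved, since the count in a vertex is Poisson-like with mean $\asymp C$, and it is also unnecessary.
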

    
    \begin{remark*}
    The constant $C$ in Theorem \ref{thm1} can be made explicit. As can be seen from the proof, when $\mfrac{q_{n+1}}{q_n} \ge r$ one may take any 
    \[
        C \ge 4000\cdot\Big(\Bigl\lfloor\frac{\ln 10}{\ln r}\Bigr\rfloor+1\Big).
    \] In particular, this lower bound increases as $r\to 1$.
    \end{remark*}

    We should further note that the metric consideration in Theorem~\ref{thm1} is, in a sense, necessary, since the analogous statement does not hold \textit{for all} $x\in[0,1)$. Indeed, for any lacunary sequence $(q_n)_{n\in\bN}$, the set of $x$ for which $\{q_n x\}$ is not dense in $[0,1)$ is winning in the sense of Schmidt, and therefore has full Hausdorff dimension~\cite{schmidt_game, mathan, poll_lac}.

    To the best of our knowledge, most existing results in the Diophantine approximation and dynamics literature either allow a small exceptional set of points $\delta$ that are not covered (see e.g. \cite{chow25,hr_twist,kp25}), or, conversely, study the size of the covered set when it is thin (for instance, determine the Hausdorff dimension in both regimes). Apart from the Kronecker sequence $\{nx\}$, where the question is easy, another noteworthy example in which full covering was established is the work of Fan, Schmeling, and Troubetzkoy~\cite{FST}, who considered the sequence $q_n=2^n$ and various Gibbs measures. However, even in the Lebesgue case their result yields covering only for interval lengths of the form $\ell_n=\mfrac{1}{n^{1-\eps}}$ for arbitrary $\eps>0$, and their methods do not seem to extend to general lacunary sequences.

    Our next result establishes a matching lower bound for covering. We show that for a sufficiently small $0<c<1$ there exists an exceptional $\delta\in[0,1)$ that is covered at most finitely often, even for slower-growing sequences:
    
    \begin{thm}\label{thm2}
        Let $\eps>0$ be arbitrarily small, and let $(q_n)_{n\in\N}$ be a real-valued sequence satisfying, for all sufficiently large $n\in\N$, the gap condition
        \begin{equation} \label{gap_condition}
            \frac{q_{n+1}}{q_n} > 1+\frac{1}{n^{1-\eps}}.
        \end{equation} Then there exists $c_{\eps}>0$ such that for every $0<c\le c_{\eps}$, for Lebesgue-a.e.\ $x\in[0,1)$ there exists $\delta_0 \in [0, 1)$ such that
        \[
            \norm{q_n x-\delta_0}<\frac{c}{n}\qquad \text{for at most finitely many } n\gs 1.
        \]
    \end{thm}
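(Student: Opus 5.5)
We will construct $\delta_0$ by a nested-interval (tree) argument of Cantor type, using a second-moment estimate to show that, for Lebesgue-a.e.\ $x$, some branch survives. Fix a rapidly growing scale sequence, say blocks $B_k=[N_k,N_{k+1})$ with $N_k=2^k$. We aim to build nested closed intervals $J_0\supset J_1\supset\cdots$ such that every point of $J_k$ avoids $(q_nx-\frac{c}{n},q_nx+\frac{c}{n})$ modulo $1$ for all $n\in B_{k'}$ with $k'\le k$. Then $\delta_0=\bigcap_k J_k$ is covered only finitely often.

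Within a block $B_k$ we choose a resolution $\rho_k\approx 2^{-k}/K$, for a large constant $K$, and partition $[0,1)$ into dyadic-type cells of length $\rho_k$. A cell $I$ is \emph{hit} by $n\in B_k$ if $\{q_nx\}$ lies within $c/n+\rho_k$ of $I$. Since $n\ge 2^k$, the total length removed by block $B_k$ is at most $\sum_{n\in B_k}2(c/n+\rho_k)\lesssim 2c+2^k\rho_k\lesssim c+1/K$, which is small. The expected proportion of hit subcells in any fixed parent cell is therefore small, provided the points $\{q_nx\}$, $n\in B_k$, equidistribute at scale $\rho_k$ inside each parent cell of length $\rho_{k-1}$. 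We view cells at level $k$ as vertices of a tree with branching factor $\rho_{k-1}/\rho_k$; a child is deleted if it is hit. This is a percolation/random colouring of a tree, and a surviving infinite path yields $\delta_0$.

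The key probabilistic input is that, for a.e.\ $x$, the number of $n\in B_k$ with $\{q_nx\}\in I$ is at most $C(|I|\,|B_k|+\text{small})$ simultaneously for all cells $I$ of length $\rho_{k-1}$, for all large $k$. This is a discrepancy bound, which we would prove through moments of exponential sums. By Erd\H{o}s--Tur\'an, it reduces to bounding $\sum_{n\in B_k}e(hq_nx)$ for $|h|\le \rho_k^{-1}\approx 2^k$. The gap condition \eqref{gap_condition} ensures that the differences $hq_n-h'q_m$ are rarely small. A fourth-moment computation $\int|\sum_{n\in B_k}e(hq_nx)|^4dx\ll |B_k|^{2+o(1)}$ then follows from counting near-solutions of $q_{n_1}+q_{n_2}-q_{n_3}-q_{n_4}\approx0$, and with Borel--Cantelli over $k$ this gives discrepancy $\ll |B_k|^{1-\eta}$ for some $\eta=\eta(\eps)>0$. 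The main obstacle is this step. Under \eqref{gap_condition} we have $q_{n+j}/q_n\ge \exp(cj n^{\eps-1})$, which is only sub-lacunary, so the additive-energy count must exploit that $q_n$ grows like $\exp(n^{\eps})$. The saving $n^{-\eps}$ is what gives the power gain at scale $2^k$, and $c_\eps$ must be small enough to absorb the losses. If the power gain is insufficient, we would instead use larger blocks (e.g.\ $N_{k+1}=N_k^{1+\eps/2}$) so that each block consists of more widely separated terms.

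Given the discrepancy bound, the combinatorics are deterministic. In each surviving parent cell of level $k-1$, the hit children number at most $(C c+C/K)\cdot\rho_{k-1}/\rho_k+o(\rho_{k-1}/\rho_k)$, which is less than the total number of children once $c\le c_\eps$ and $K$ is large. Hence every surviving cell has a surviving child, and by induction an infinite nested path exists from some level $k_0$ on. Indices $n<N_{k_0}$ are finitely many, so they do not matter, and $\delta_0$ is covered at most finitely often.
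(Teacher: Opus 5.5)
\textbf{There is a genuine gap.} It lies in the step you call deterministic, the claim that every surviving cell has a surviving child. With $N_k=2^k$ and $\rho_k\approx 2^{-k}/K$, the branching factor $\rho_{k-1}/\rho_k$ equals $2$, so the $o(\rho_{k-1}/\rho_k)$ bookkeeping is meaningless. A parent cell of length $\rho_{k-1}$ receives on average only $\rho_{k-1}|B_k|\approx 2/K<1$ points of $B_k$.

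By your own definition of being hit (distance $<c/n+\rho_k$), a point hits the child cell containing it together with both adjacent cells of length $\rho_k$. One of these adjacent cells is its sibling, so every point of $B_k$ kills both children of the parent it falls into.
\begin{itemize}
\item At a level where all cells are still alive, this contradicts your claim for every $x$.
\item At later levels, equidistribution of $B_k$ relative to the surviving parents is exactly what your discrepancy input is meant to provide. It implies that a proportion $\gg 1/K$ of the surviving parents dies at every level.
\end{itemize}

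No discrepancy estimate can repair this. At the scale of one parent cell the main term $|I|\,|B_k|$ is $O(1)$, while the error $|B_k|^{1-\eta}$ is enormous, so no per-cell statement follows. The obstruction is also intrinsic rather than a matter of choosing parameters. A cell can be unhit only if its length is $\lesssim 1/|B_k|$, so per-cell counts are always $O(1)$. Your fallback of longer blocks such as $N_{k+1}=N_k^{1+\eps/2}$ makes things worse: then $\sum_{n\in B_k}2c/n\asymp c\,\eps\log N_k\to\infty$, so a single block removes more than the whole circle's worth of length.

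\textbf{The missing idea is a supercritical branching argument.} You must allow a proportion of the surviving cells to die and show that their \emph{number} still grows. The paper does this as follows.
\begin{itemize}
\item It tracks the set $\mathcal{I}_n$ of surviving thick vertices at level $n$ and aims for $\#\mathcal{I}_n\ge 2^{(1-\eps/2)n}$.
\item At each level it controls only the total (smoothed) count $C_n$ of new points in the union $I(\mathcal{I}_n)$. The expectation of $C_n$ is $\asymp L\,\#\mathcal{I}_n$, which is large, so Chebyshev's inequality applies.
\item On the event $C_n\le 2\,\bE[C_n\mid\text{past}]$ one gets $\#\mathcal{I}_{n+1}\ge 2(1-O(L))\,\#\mathcal{I}_n$.
\end{itemize}

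Because $\mathcal{I}_n$ is itself determined by the earlier levels, these moments must be computed \emph{conditionally on the past}.
\begin{itemize}
\item Buffer blocks $\Delta_n'$ of size $\approx L2^{(1-\eps/2)n}$ are inserted, and their points are allocated in the worst possible way. Via the gap condition, they guarantee $q_N/q_M\gg N^{100}$ across levels.
\item Hence conditioning on the $\sigma$-algebra of fine dyadic $x$-intervals generated by earlier levels leaves $\{q_Nx\}$ nearly uniform.
\item Within a level, pairs with $|M-N|\le 2^{(1-3\eps/4)n}$ are bounded trivially. The remaining pairs are nearly orthogonal, again by the gap condition.
\end{itemize}

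This gives a conditional failure probability $\ll 2^{-\eps n/4}$. Hence a surviving path exists with probability at least $\prod_{n\ge n_0}(1-2^{-\eps n/4})$, which tends to $1$ as $n_0\to\infty$. No discrepancy or fourth-moment bound is needed, and in particular no additive-energy count for sub-lacunary sequences.
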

    
    \begin{remark*}
     We note that the approach used to prove the lower bound is substantially different from that for the upper bound. Here the result applies only to the Lebesgue measure, but the growth-rate requirement can be relaxed. In fact, we note that the condition~\eqref{gap_condition} is less classical than the Hadamard gap condition, but still appeared in the literature many times (see, for example, \cite{Erdos_gap_2, BPT}). From the proof one can deduce the same statement with Lebesgue measure replaced by any probability measure $\mu$ satisfying~\eqref{rajchman_cond} additionally assuming the following technical condition: for every sufficiently large $L>0$, and all sufficiently large $n\in\N$, one has $q_{n+\Delta_n}^{\eta} > q_n, \
        \Delta_n= \lfloor n^{\,1-\frac{1}{L}} \rfloor$. This assumption is rather restrictive and, in general, does not hold even for lacunary sequences. 
    \end{remark*}

    \subsection*{Littlewood--Cassels problem}
    
       \emph{Littlewood's conjecture} is a famous open problem, dating back to the 1920s, which asserts that for any $\alp,\bet\in\mathbb{R}$ one has
    \begin{equation}\label{lc}
        \liminf_{n\to\infty} n\,\|n\alp\|\,\|n\bet\|=0.
    \end{equation} This conjecture has deep connections to homogeneous dynamics and, in particular, to measure rigidity~\cite{EKL}. From a metric point of view, \eqref{lc} even holds with a speed of convergence: Gallagher~\cite{Gallagher} showed that for Lebesgue-a.e.\ $(\alp,\bet)\in[0,1]^2$,
\[
    \liminf_{n\to\infty} n(\log n)^2\,\|n\alp\|\,\|n\bet\|=0.
\]
Note that any potential counterexample to~\eqref{lc} must have both $\alp$ and $\bet$ in
\[
    \Bad:=\big\{x\in[0,1): \inf_{n\ge 1} n\|nx\|>0\big\},
\]
the set of \emph{badly approximable} numbers, which is known to have Lebesgue measure zero. Thus, Gallagher's result provides no information when $\alp,\bet\in\Bad$, and it is therefore natural to study~\eqref{lc} on subsets of $\Bad$.

The first such result was established in the seminal work of Pollington and Velani~\cite{PV}, who proved that, given $\alpha\in\Bad$, there exists a set $\bG=\bG(\alpha)\subseteq\Bad$ with $\dimH \bG(\alpha)=1$ such that for every $\beta\in\bG(\alpha)$ one has
\begin{equation}\label{pv}
    n\,\|n\alp\|\,\|n\bet\| \ls \frac{1}{\log n}
    \qquad \text{for infinitely many } n\ge 1.
\end{equation}
The size of the exceptional set was improved in the groundbreaking work of Einsiedler, Katok, and Lindenstrauss~\cite{EKL}, who showed that~\eqref{lc} holds except for a set of pairs $(\alpha,\beta)$ of Hausdorff dimension zero.

Another active direction of research concerns an inhomogeneous generalization of Littlewood's conjecture, which asserts that for any $\alpha,\beta,\gamma,\delta\in\mathbb{R}$ one has
\begin{equation}\label{inhom_LC}
    \liminf_{n \to \infty} n \lVert n\alpha - \gamma\rVert \lVert n\beta - \delta\rVert = 0,
\end{equation}
unless there is a trivial obstruction (such as $\alpha,\beta\in\mathbb{Q}$ while $\gamma,\delta\notin\mathbb{Q}$, or more generally, $\alpha,\beta,1$ being linearly dependent, see the work of Moshchevitin \cite{mosh_little} on this and related questions). For example, when $\gamma$ and $\delta$ are \textit{fixed}, an analogue of Gallagher's result was obtained in~\cite{CT_memo}.

The situation is significantly different when one asks for~\eqref{inhom_LC} to hold \textit{uniformly in} $\gamma\in\bR$ or $\delta\in\bR$, or uniformly in both $(\gamma,\delta)\in\bR^2$. In the breakthrough work~\cite{shap_ann}, Shapira proved that for Lebesgue almost all $(\alpha,\beta)\in[0,1]^2$ the relation~\eqref{inhom_LC} holds uniformly in $(\gamma,\delta)\in\bR^2$, thereby confirming a conjecture of Cassels~\cite{cass_geom}. A quantitative improvement, with an additional factor of $(\log^{(5)}n)^{\varepsilon}$, was given by Gorodnik and Vishe~\cite{GV}.

    In this part of the paper, we are interested in a hybrid version of the problems of Littlewood and Cassels, where in~\eqref{inhom_LC} $\gamma$ is \textit{fixed} and $\delta$ is \textit{uniform} in $\bR$, which was first studied by Haynes, Jensen, and Kristensen~\cite{haynes} and can be stated as follows:

    \begin{prob}\label{problem1}
    For fixed $\alpha$ satisfying some Diophantine condition and fixed $\gamma\in\bR$, determine the fastest rate $\psi(n)\to 0$ for which there exists a set $\mathbb{G}(\alpha,\gamma) \subseteq \Bad$ of Hausdorff dimension~1 such that, for every $\beta\in\mathbb{G}(\alpha,\gamma)$ and every $\delta\in\bR$, one has
    \begin{equation}\label{our_LC}
        n\|n\alp - \gamma\|\,\|n\bet - \delta\| \leq \psi(n) \text{ for infinitely many } n \geq 1.
    \end{equation}
    \end{prob}

We note that the above question can be considered for any set $\bG(\alpha, \gamma)$ (not necessarily being a subset of $\Bad$) with $\mu(\bG)=1$ for any measure $\mu$ satisfying~\eqref{rajchman_cond} and all proofs in this article, as well as preceding works on Problem \ref{problem1}, generalize immediately. In particular, Pollington and Velani used Kaufman's measures~\cite{Kaufman} supported within $\Bad$ that allows to go immediately from $\mu(\bG)=1$ with~\eqref{rajchman_cond} to $\mathbb{G}(\alpha,\gamma) \subseteq \Bad$ with full Hausdorff dimension. While obviously the Lebesgue measure satisfies~\eqref{rajchman_cond}, the methods are explicitly aiming to say something about the generic behaviour within Lebesgue null sets.

     Motivated by the work of Pollington and Velani, Haynes, Jensen, and Kristensen~\cite{haynes} considered the above question in the case where $\alpha\in\Bad$. They proved that $\psi(n)=\mfrac{1}{(\log n)^{1/2-\eps}}$ is admissible when $\gamma=0$.
     Refinements of this result were obtained in~\cite{tz, cz}. For instance, it was extended to an arbitrary fixed $\gamma\in\bR$; the restriction $\alpha\in\Bad$ was relaxed to the Lebesgue full set
\begin{equation}\label{Kdef}
    \cK := \Big\{\alpha \in[0,1]: \sup_{k\ge 1}\frac{\log q_k(\alpha)}{k}<\infty\Big\},
\end{equation}
where $q_k(\alpha)$ denotes the denominator of the $k$-th convergent of $\alpha$ (note that $\Bad\subset \cK$); and the factor $(\log n)^{\eps}$ from~\cite{haynes} was replaced by $(\log\log\log n)^{1/2+\eps}$. All results in~\cite{cz,haynes,tz} rely on metric discrepancy estimates for lacunary sequences $\{q_k x\}$.

   A significant improvement was obtained recently by Chow and Technau~\cite{ct}, who showed that~\eqref{our_LC} holds with $\psi(n)=\mfrac{(\log\log n)^{3+\varepsilon}}{\log n}$, using dispersion instead of discrepancy. Subsequently, the exponent $3$ was reduced to $2$ by the third author~\cite{eduard}. \\

    As an application of our infinite covering result, we obtain a further improvement. The bound in Theorem~\ref{thm1} will allow us to remove the $\log\log n$-factors entirely:

    \begin{cor} \label{cor1}
        Let $\alp \in \cK$ with $\cK$ as in \eqref{Kdef} and $\gamma \in \bR$. Then there exist $C > 0$ and a set $\bG = \bG(\alp,\gam) \subseteq \Bad$ with $\dimH\bG=1$ such that for every $\beta \in \bG$ one has for all $\delta \in \bR$
    \begin{equation} \label{improved}
        n \|n\alp -\gamma\| \|n\beta-\delta\| \ls \frac{C}{\log n} \qquad \text{ for infinitely many } n\gs 1.
    \end{equation}
    \end{cor}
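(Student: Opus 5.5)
We deduce Corollary~\ref{cor1} from Theorem~\ref{thm1}, applied along a lacunary subsequence of $n$'s adapted to $\alpha$ and $\gamma$, with $\mu$ a Kaufman measure supported on $\Bad$.

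\textbf{Step 1: a lacunary sequence of good $n$.} Since $\alpha\in\cK$ is irrational, the inhomogeneous theory of continued fractions (Ostrowski expansion of $\gamma$, or Minkowski/Cassels-type inhomogeneous approximation) yields a sequence of integers $(m_k)_{k\in\bN}$ with
\[
    \|m_k\alpha-\gamma\|\ll \frac{1}{m_k},
\]
and with $m_k$ growing at most like the convergent denominators. More precisely, we take one $m_k$ in each range $[q_{k}(\alpha),q_{k+1}(\alpha))$, or a sparse subfamily of these. The standard inhomogeneous bound gives $\|m\alpha-\gamma\|\ll 1/q_k$ for some $m\le q_{k+1}$. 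We then pass to a subsequence with $m_{k+1}/m_k\ge 2$, which is possible since $q_{k+2}\ge 2q_k$. The defining condition of $\cK$, namely $\log q_k(\alpha)\ll k$, ensures that after this thinning we still have $\log m_k\asymp k$.

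\textbf{Step 2: apply Theorem~\ref{thm1}.} Take $\mu$ to be Kaufman's measure on a subset of $\Bad$, which has full dimension and polynomial Fourier decay (for each dimension below $1$ there is such a measure). Theorem~\ref{thm1} with $q_k=m_k$ gives, for $\mu$-a.e.\ $\beta$ and uniformly in $\delta$,
\[
    \|m_k\beta-\delta\|<\frac{C}{k}
\]
for infinitely many $k$. Let $n=m_k$. Then
\[
    n\|n\alpha-\gamma\|\|n\beta-\delta\|\ll \frac{1}{k}\asymp\frac{1}{\log n}.
\]
The set of such $\beta$ in the support of $\mu$ intersected with $\Bad$ has dimension arbitrarily close to $1$. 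Taking the union over a sequence of Kaufman measures with dimensions tending to $1$ gives $\dimH\bG=1$, with a uniform constant since $C$ depends only on the lacunarity ratio $2$.

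\textbf{Main obstacle.} The delicate point is Step 1. We need $\|m_k\alpha-\gamma\|\le c/m_k$ for the actual $m_k$, not merely $\ll 1/q_k$ with $m_k$ possibly much larger than $q_k$. At the same time we need the $m_k$ to be lacunary and to satisfy $\log m_k\ll k$.

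The first remedy to try is to use best inhomogeneous approximations: the successive minima of $\|m\alpha-\gamma\|$ over $m\le N$. By Minkowski's inhomogeneous theorem, for $N=q_{k+1}$ these satisfy $m\|m\alpha-\gamma\|\ll q_{k+1}/q_k$. This quantity is not bounded for general $\alpha\in\cK$, which is a problem.

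The fix I would try is a counting argument. Since $\sum_k\log(q_{k+1}/q_k)=\log q_K\ll K$, the ratio $q_{k+1}/q_k$ is bounded by a constant $A$ for a positive proportion of $k\le K$. We restrict to those $k$. The resulting subsequence still satisfies $\log m_k\asymp k$ (index count up to constant factors), and Theorem~\ref{thm1} is applied to it. A lacunary subsequence of positive lower density in the index keeps the $C/k$ rate up to constants.

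If $\gamma$ lies in $\bZ\alpha+\bZ$, a degenerate case, we simply shift and use the homogeneous convergents directly.
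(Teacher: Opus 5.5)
Your overall route is the paper's. You build a lacunary sequence $(m_k)$ with $m_k\|m_k\alpha-\gamma\|\ll 1$ and $\log m_k\ll k$, apply Theorem~\ref{thm1} to it under a Kaufman measure, and multiply the two bounds. You then get $\dimH\bG=1$ from the Frostman bound $\mu(J)\ll|J|^{\rho}$ via the mass distribution principle. Your Step~2 is fine, including the remark that the constant in Theorem~\ref{thm1} does not depend on $\mu$. The paper does not construct the sequence itself; it quotes \cite[Lemma~2.1]{cz}, which gives $\|q_n\alpha-\gamma\|\le 8/q_n$ with $8^n<q_n\le 4^{6\Lambda(\alpha)n}$. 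Your own Step~1, which you rightly flag as the delicate point, has a gap as written.

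\textbf{The gap.} You assert that a good $m_k$ can be chosen in $[q_k,q_{k+1})$. The bound you invoke only gives \emph{some} $m\le q_{k+1}$ with $\|m\alpha-\gamma\|\ll 1/q_k$, with no lower bound on $m$. Your lacunarity thinning and the index count $\log m_k\ll k$ both depend on where $m_k$ lies.
\begin{itemize}
\item If $\gamma$ is extremely close to $\{m^*\alpha\}$ for some small $m^*$, that single $m^*$ is a valid witness for a long stretch of $k$. So the selected values need not be distinct or grow. This is a quantitative issue, not confined to the degenerate case $\gamma\in\bZ\alpha+\bZ$.
\item The window $[q_k,q_{k+1})$ is not always long enough. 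If $a_{k+1}=1$ it has only $q_{k-1}$ elements. Their fractional parts are a rotation of $\{j\alpha\}_{0\le j<q_{k-1}}$, which leaves a gap of length at least $1/q_{k-1}$. When $a_k$ is large this is much larger than $1/q_k$, and such $k$ still pass your filter $q_{k+1}/q_k\le A$.
\end{itemize}
So your counting fix controls the size of $m\|m\alpha-\gamma\|$, but not where $m$ lies.

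\textbf{The repair.} It is simpler than your fix and makes both the density argument and the separate degenerate case unnecessary.
\begin{itemize}
\item By the three-distance theorem (equivalently $q_k\|q_{k-1}\alpha\|+q_{k-1}\|q_k\alpha\|=1$), the points $\{j\alpha\}$, $0\le j<q_k$, have maximal gap $\|q_{k-1}\alpha\|+\|q_k\alpha\|<2/q_k$.
\item Apply this to $\gamma-q_k\alpha$. You get $m=q_k+j\in[q_k,2q_k)$ with $\|m\alpha-\gamma\|<1/q_k$, hence $m\|m\alpha-\gamma\|<2$, for every $k$ and every $\gamma$.
\item Keep every fourth $k$. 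Then $m_{k+4}\ge q_{k+4}\ge 4q_k>2m_k$, and $\alpha\in\cK$ gives $\log m_k\le\log(2q_k)\ll k$.
\end{itemize}
A smaller point: even with $m_k\in[q_k,q_{k+1})$, your thinning step ``since $q_{k+2}\ge 2q_k$'' needs a stride of three, not two, because $m_{k+2}/m_k$ is only bounded below by $q_{k+2}/q_{k+1}$. With the sequence above, the rest of your argument goes through exactly as in the paper.
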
 
     Here we shortly remark where the improvement upon the preceding works comes from. Note that the discrepancy in~\cite{cz,haynes, tz} and the dispersion in~\cite{ct, eduard} play essentially the same role. One first chooses a sequence $(q_k)_{k\in\bN}$ minimizing the first factor, so that $\norm{q_k\alpha-\gamma}\le C q_k^{-1}$ for all $k\in\bN$. Under~\eqref{Kdef}, this sequence can be chosen lacunary. One then seeks the best rate $\psi$ such that, for $\mu$-almost all $\beta\in[0,1)$, the second factor satisfies, uniformly in $\delta\in\bR$,
    \begin{equation}\label{one_factor}
        \norm{q_k\beta-\delta}\le \psi(q_k) \qquad \text{for infinitely many } k,
     \end{equation}
     where $\mu$ is a Kaufman measure~\cite{Kaufman} supported on $\Bad$. The results in~\cite{haynes,tz,cz} provide discrepancy bounds for lacunary sequences $\{q_k x\}$ of order $\lessapprox k^{-1/2}$, as in the i.i.d.~case. This leads to the rate $\psi(n)\approx (\log n)^{-1/2}$.

    The main idea of Chow and Technau~\cite{ct} was to estimate directly the maximal gap between consecutive points of the sequence $\{q_k x\}$ for $\mu$-almost every $x$. They obtained an upper bound of the form $\mfrac{(\log k)^{3+\eps}}{k}$, which was improved more recently to $\mfrac{(\log k)^{2+\eps}}{k}$ by the third author~\cite{eduard} (see also~\cite{eduard2}, where $\eps$ was removed and a higher-dimensional analogue was established). On the other hand, for the maximal gap one expects the almost sure bound of the order $\approx \mfrac{\log k}{k}$ from the result of Devroye~\cite{Devroye} for the i.i.d.~case. Therefore, the rate $\psi(n)=\mfrac{\log\log n}{\log n}$ is the best one can hope for using the dispersion approach.

We note that both discrepancy and dispersion approaches yield something \textit{stronger} than what is required in this problem. Bounds on the discrepancy and on the maximal gap imply that any interval in $[0,1)$ of length $\ge (\psi(q_k))^{-1}$ contains at least one of the points $\{q_1\beta\},\ldots,\{q_k\beta\}$, and hence every $\delta\in[0,1)$ lies at distance at most $(\psi(q_k))^{-1}$ from one of them. Consequently,~\eqref{one_factor} holds \textit{for all sufficiently large $k$}, whereas it is only required to hold for \textit{infinitely many} $k$. Therefore, one can seek for the smallest size of intervals around each $\delta$ containing a point $\{ q_k \beta\}$ only infinitely often (whereas intervals of the maximal gap size would contain $\{q_k \beta\}$ eventually always). This is exactly what is captured by the interval length in the infinite covering problem.

As in many questions in metric diophantine approximation, the difference between \textit{non-uniform} (i.e. infinitely often) and \textit{uniform} (i.e. for all but finitely many) approximation is a factor of $\log$ -- which in Problem \ref{problem1} arises via \eqref{one_factor} as $\log \log n$.\\

While our result is sharp in the sense of a one-factor optimization problem \eqref{one_factor}, one may hope to improve the rate in Problem~\ref{problem1} further by switching to a two-factor optimization problem. This leads to studying the distribution of $\{q_k x\}$ for several sequences $(q_k)$ simultaneously, which seems to be more challenging. In order to get some heuristics what the best possible rate could be, we consider here a randomized analogue of Problem~\ref{problem1}, in which the second factor $\norm{n\beta-\delta}$ is replaced by $\norm{X_n-\delta}$ for i.i.d.\ uniformly distributed points $X_n$.

    \begin{thm} \label{thm3}
        Let $(X_n)_{n \in \bN}$ be a sequence of i.i.d.~Lebesgue-uniform points from $[0, 1)$ and let $\bP$ be the corresponding probability measure.
        There exists an effective constant $0<C<\infty$ such that, for every $\alpha\in\Bad$, every $\gamma\in\bR$, and every monotonically decreasing function $\psi:\N\to[0,\infty)$ satisfying
            \begin{equation}\label{fake_khintchine}
                \limsup_{N \to \infty} \sum_{N < n \leq 2^N} \psi(n) > C,
            \end{equation}
        one has
        \begin{equation}\label{inhomo_meas1}
            \bP \big[\forall \delta \in \bR: \|n\alp -\gamma\| \|X_n-\delta\| \ls \psi(n) \text{ for inf. many } n\gs 1 \big]  = 1.
        \end{equation}

        \vspace{1ex}

        Conversely, there exists $\varepsilon>0$ such that, for every $\alpha\in\Bad$ and every monotonically decreasing $\psi$ satisfying
        \begin{equation}\label{fake_convergence}
            \psi(n) \leq \frac{1}{n \log n}, \qquad \limsup_{N \to \infty} \sum_{N < n \le 2^N} \psi(n) < \varepsilon,
        \end{equation}
        one has
        \begin{equation} \label{homo_meas0}
            \bP \big[ \forall \delta \in [0, 1]: \|n\alp\| \|X_n-\delta\| \ls \psi(n) \text{ for inf. many } n\gs 1 \big]  = 0.
        \end{equation}
\end{thm}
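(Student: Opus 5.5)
We reduce both parts to Dvoretzky-type covering problems with explicit arc lengths and then apply Shepp's criterion, or rather the block-wise form of the estimates behind it. For $\alpha\in\Bad$ and fixed $\gamma$, the values $\|n\alpha-\gamma\|$ for $n$ in a dyadic block $(M,2M]$ are well spaced: by the three-gap theorem and bounded partial quotients, each interval of length $\asymp 1/M$ contains $\asymp 1$ of them. Hence the number of $n\in(M,2M]$ with $\|n\alpha-\gamma\|\le t$ is $\asymp tM$ for every $t\gg 1/M$, with constants depending only on the partial quotient bound of $\alpha$. Conditionally on $\alpha,\gamma$, the event in~\eqref{inhomo_meas1} says the random arcs centred at $X_n$ with half-length $\ell_n/2:=\psi(n)/\|n\alpha-\gamma\|$ cover every $\delta$ infinitely often. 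So it is exactly Dvoretzky's problem with a deterministic, non-monotone length sequence $(\ell_n)$, capped at $1$.

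\textbf{Sufficiency.} We estimate the total length $\sum_{n\in(N,2^N]}\min(1,\ell_n)$. Split into dyadic blocks $(M,2M]$ and use the counting above with $t$ in dyadic scales $2^{-j}$. A block contributes $\asymp \psi(M)\sum_{j\le\log M}2^{j}\cdot 2^{-j}M\asymp M\psi(M)\log M$. Summing over blocks from $N$ to $2^N$ gives $\asymp\sum_{N<n\le 2^N}\psi(n)\log n$.

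This shows that the total is not what matters. Shepp's criterion involves, after sorting, $\sum n^{-2}\exp(\ell_1+\dots+\ell_n)$, and long arcs are counted once. Instead we argue directly with a \emph{single-scale covering}. Fix a scale $N$ and restrict to the indices $n\in(N,2^N]$ with $\|n\alpha-\gamma\|\le \psi(n)\cdot 2^{N}$; then $\ell_n\gtrsim 2^{-N}$.

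We discretise $[0,1)$ into a grid of mesh $\asymp 2^{-N}$. The probability that a fixed grid cell is missed is $\prod(1-\ell_n')\le\exp(-\sum\ell_n')$, where $\ell_n'$ are the lengths truncated to the scale of the cell. By the counting, $\sum_{N<n\le 2^N}\min(\ell_n,1)\ge c'\sum_{N<n\le 2^N}\psi(n)\cdot\log(2^N\psi)\dots$. The point of the range $(N,2^N]$ is that for $n\le 2^N$ the quantity $\log(1/\text{mesh})\asymp N$ is balanced against the count. We therefore aim for the bound
\[\sum_{N<n\le 2^N}\min(\ell_n,1)\ \gtrsim\ N\sum_{N<n\le 2^N}\psi(n),\]
using only $n$ with $\|n\alpha-\gamma\|\asymp 1/n$, of which there are $\gg$ a positive proportion. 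When $\sum\psi>C$, a union bound over $2^N\cdot N$ cells gives failure probability $2^{N}e^{-cCN}$, which is summable along a sparse sequence of $N$ where~\eqref{fake_khintchine} holds. The events for disjoint index ranges are independent, so we use a sequence $N_k$ with $2^{N_k}<N_{k+1}$. Borel--Cantelli then yields covering infinitely often.

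\textbf{Necessity.} With $\gamma=0$, take the $\delta$-grid at scale $2^{-N}$ in a dyadic super-block $(N,2^N]$ and use the second-moment / Shepp-type lower bound for the probability that some point is uncovered. Arcs with $n\le N$ contribute nothing new infinitely often. For $n>2^N$ we have $\psi(n)\le 1/(n\log n)$, so the expected number of hits is summable. It remains to show that with probability bounded below, along infinitely many (independent) super-blocks, some $\delta$ avoids all arcs from that block. This requires that the block's covered measure is at most $\varepsilon N\cdot 2^{-N}$-scale, i.e.\ an upper bound matching the lower one. We then pick an uncovered cell via a branching/tree argument: each of $2^{N}$ cells survives with probability $\ge e^{-O(\varepsilon N)}$, with sufficient decorrelation, so by the Paley--Zygmund inequality survivors exist with positive probability. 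A nested-interval compactness argument across blocks then produces a single $\delta$ that is uncovered in all blocks, and the Kolmogorov $0$--$1$ law upgrades the conclusion.

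The main obstacle is this nesting, since covering must fail for the same $\delta$ in all large blocks. It is handled by choosing the survivor inside the previous block's surviving cell, which costs only the factor accounted for by $\varepsilon$ small.
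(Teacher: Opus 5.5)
Your reduction to a Dvoretzky problem with lengths $\ell_n\asymp\psi(n)/\|n\alpha-\gamma\|$, together with the Bohr-set counting, is the same starting point as the paper. Both halves then contain steps that fail.

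\textbf{Covering.} A single-scale union bound is a legitimate substitute for Shepp's criterion. However, you pair the window $(N,2^N]$ with the wrong mesh, and the estimate you need, $\sum_{N<n\le 2^N}\min(\ell_n,1)\gtrsim N\sum_{N<n\le 2^N}\psi(n)$, is false.

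\begin{itemize}
\item Your own block computation gives total length $\asymp\sum_{N<n\le 2^N}\psi(n)\log n$, and restricting to $\ell_n\gtrsim 2^{-N}$ only lowers it.
\item For $\psi(n)=1/(n\log n\log\log n)$, which the theorem covers, this is $\asymp\sum_{N<n\le 2^N}1/(n\log\log n)\asymp N/\log N=o(N)$. So the union bound over $2^N$ cells fails for every choice of $N$.
\item Your justification is also wrong. The $n\in(M,2M]$ with $\|n\alpha-\gamma\|\asymp 1/M$ are $O(1)$ in number, not a positive proportion, and over the window they contribute only $\asymp\sum\psi(n)$.
\end{itemize}

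The correct bookkeeping is by length scale, which is what the paper's sets $S_\ell$ do. A block $n\asymp M$ contributes total length $\asymp M\psi(M)$ to every scale $2^{-k}$ with $\psi(M)\lesssim 2^{-k}\lesssim M\psi(M)$. Hence, for $\psi(n)\approx 1/(n\log n)$, the window $(N,2^N]$ corresponds to mesh $\approx 1/N$, not $2^{-N}$. The paper shows that the $T_1\ll b^{2L}$ retained arcs of length $\ge b^{-L}$ have total length $T_2\gg L\sum_{b^{2L}<n<b^{b^{L/4}}}\psi(n)$. It uses Lemma~\ref{about_exp_gap} to guarantee that these narrower windows still carry mass $\gg C$. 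With this accounting, Shepp's criterion (or your union bound at mesh $b^{-L}$) finishes the proof.

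\textbf{Non-covering.} Two steps fail.

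\begin{itemize}
\item The claim that arcs with $n>2^N$ have summable expected number of hits is false. The expected number of arcs containing a fixed $\delta$ is $\sum_n\ell_n\asymp\sum_n\psi(n)\log n$. This diverges for admissible $\psi$, e.g.\ those in \eqref{no_khintchine}. Then almost every $\delta$ is covered infinitely often, so non-covering is purely an exceptional-set phenomenon and nothing can be discarded by first moments.
\item The nesting step fails. In the next super-block, the arcs longer than $2^{-N}$ can have total length $\asymp\varepsilon N$. So a fixed surviving cell of size $2^{-N}$ can be swallowed by a single arc with probability $1-e^{-c\varepsilon N}$; fixing one survivor therefore does not work. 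Moreover, a Paley--Zygmund bound giving only positive probability per block does not multiply over infinitely many blocks.
\end{itemize}

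Repairing this needs a multi-scale conditional second-moment argument over the whole surviving population, which is exactly the hard (convergent) half of Shepp's theorem. The paper simply invokes Shepp. Using $\|n\alpha\|\gg 1/n$ and $\psi(n)\le 1/(n\log n)$, it confines arcs of length $\asymp b^{-\ell}$ to $n\le b^{c_1 b^\ell}$. It then shows that the $T_1\gg b^L/L^{100}$ longest arcs have total length $T_2\ll\varepsilon L\ll\varepsilon\log T_1$, whence $\sum_m m^{-2}\exp(\ell_{(1)}+\dots+\ell_{(m)})<\infty$.
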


    \vspace{2ex}
    
While the proof of this theorem is deferred to the Appendix, we give several remarks below:

    \begin{itemize}
        \vspace{1ex}
        \item[(i)] Note that when $\psi(n)$ is of the form
        \[
            \psi(n) = \frac{1}{n \cdot \log n \cdot \log \log n \cdot \ldots \cdot \log^{(k)} n}, \qquad k \geq 1,
        \]
        then one has~\eqref{fake_khintchine}, which suggests that one can potentially improve upon Corollary \ref{cor1} by roughly a factor of $\log \log n$. For
        \[
            \psi(n) = \frac{1}{n \cdot \log n \cdot \log \log n \cdot \ldots \cdot (\log^{(k)} n)^{1+ \varepsilon}}, \qquad k \geq 1
        \]
        however, one has~\eqref{homo_meas0}. This may resemble, at first sight, a classical Khintchine-type dichotomy, but \eqref{fake_convergence} shows that this is not the case: there exists monotonically decreasing $\psi$ with
        \begin{equation}\label{no_khintchine}
            \sum_{n \in \bN} \psi(n) = \infty, \qquad \limsup_{N \to \infty} \sum_{N < n \le 2^N} \psi(n) < \varepsilon,
        \end{equation}
        such that for any $\alpha \in \Bad$, \eqref{homo_meas0} holds. \\

        \item[(ii)]  In contrast to the constant invariance principle that appears in various metric settings (e.g. Cassels's invariance principle~\cite[Lemma 9]{cassels}, \cite[Lemma 1]{Ber_Vel}), there is no such principle in this problem. This property can be seen to be inherited from the classical Dvoretzky covering problem, where covering with the approximation function $\psi(n)=\mfrac{c}{n}$ depends on the value of~ $c$. \\

        \item[(iii)]  Note that the second part of the theorem cannot hold with $\norm{n\alpha-\gamma}$ in place of $\norm{n\alpha}$ \textit{uniformly} in $\gamma$: suppose that $\alpha$ is (homogeneously) badly approximable, but we pick some $\gamma$ that is inhomogeneously well approximable, e.g.
        \[
            \lVert n\alpha - \gamma \rVert \leq \frac{1}{n^2} \quad \text{ for infinitely many } n \in \bN.
        \] Then for the function $\psi(n):=\mfrac{1}{n^2}$, even the trivial bound $\lVert X_n-\delta\rVert \leq \mfrac{1}{2}$ suffices to obtain infinitely many solutions to $\norm{n\alpha-\gamma}\,\norm{X_n-\delta}\le \psi(n)$, while $\sum_n \psi(n)<\infty$. In fact, Kurzweil's theorem~\cite{kurzweil} shows that if $\sum_{n\in\bN}\psi(n)=\infty$ and $\alpha\in\Bad$, then for almost every $\gamma$ one has $\lVert n\alpha-\gamma\rVert \le \psi(n)$ infinitely often. In particular, this shows that \eqref{no_khintchine} provides an example that does not generalize to inhomogeneous approximation. \\

        \item[(iv)] Finally, we note that the restriction $\psi(n)\le \mfrac{1}{n\log n}$ in~\eqref{fake_convergence} cannot be dropped completely: to see this, fix $\alpha\in\Bad$ and a sparse subsequence $(q_{n_k})_{k \in \mathbb{N}}$ of the convergent denominators of $\alpha$, and define
\[
    \psi(n):=\frac{\varepsilon/2}{q_{n_k}}, \quad q_{n_{k-1}} < n \le q_{n_k}.
\]
If $(q_{n_k})_k$ is chosen sufficiently sparse, then
\[
    \limsup_{N \to \infty} \sum_{N < n \le 2^N} \psi(n) < \varepsilon,
\]
while $\mfrac{\psi(q_{n_k})}{\lVert q_{n_k}\alpha \rVert} > \mfrac{\varepsilon}{2}$ shows that \eqref{inhomo_meas1} holds as soon as
\[
    \bP \Big[\forall \delta \in \bR: \|X_{q_{n_k}}-\delta\| \ls \frac{\eps}{2} \text{ for inf. many } k\gs 1 \Big] = 1,
\]
which follows immediately from Shepp's theorem.
\end{itemize}

    \subsection*{Intersection with fractal sets}
    
     The middle-third Cantor set $K$ can be defined as the set of all real numbers whose base-$3$ expansion contains no digit $1$. It is well known that $K$ has Lebesgue measure zero and Hausdorff dimension $\mfrac{\log 2}{\log 3}$. Mahler's 1984 questions~\cite{Mahler} asked about approximating the elements in~$K$ by
    \begin{itemize}
    \item[(i)] rationals in Cantor’s set (intrinsic approximation)
    \item[(ii)] by rationals not (necessarily) in Cantor’s set (extrinsic approximation).
    \end{itemize}

Both questions initiated a flurry of work on approximating points in the middle-third Cantor set~$K$ and generalizations to other fractal sets. In this article, we focus on the extrinsic question (ii). In this setup, recent breakthroughs on Khintchine-type statements for fractal sets~\cite{BHZ25,DJ24,KL23} have been obtained with various conditions on the fractal set, with the solution of the emblematic case of the middle-third Cantor set being established in~\cite{BHZ25}.

While this shows, in particular, that $\mu$-almost every (with respect to the natural measure on $K$) $x\in K$ has minimal irrationality exponent $2$, it does not provide any information on how many points $x\in K$ are well approximable with rationals in $\mathbb{R}$. Defining, as usually,
\[
    W_{\nu} := \Big\{\delta \in [0,1):  \Big| \delta - \frac{a}{n} \Big| \leq \frac{1}{n^{\nu}} \text{ for inf. many } \frac{a}{n}\in\mathbb{Q} \Big\},\quad  \nu \geq 2,
\] natural questions concern the properties of $W_{\nu}\cap K$ for $\nu>2$. The existence of irrational numbers in $K$ with any prescribed irrationality exponent (and hence the fact that $W_{\nu}\cap K\neq\emptyset$ for all $\nu\ge 2$) was shown earlier in~\cite{Bugeaud08, LSV, Weiss}. A recent work \cite{he2026} provides conjecturally sharp upper bounds, as well as non-trivial lower bounds for the Hausdorff dimension of these intersections.
Furthermore, it was shown in~\cite{chen2025} that $\bigcup_{\nu > 2}W_{\nu}\cap K$ has full Hausdorff dimension $\dimH K$. 
However, the problem of determining for fixed $\nu >2 $ the size of $W_{\nu}\cap K$ (or other fractal sets in place of $K$) remains widely open; see, for instance, the recent survey of Beresnevich--Velani~\cite{BV26} and the references therein.\\

In order to provide a conjectural framework, Bugeaud and Durand~\cite{BD} introduced several random analogues of this problem. In particular, in one of their models the rationals are replaced by random points. More precisely, let $(X_n)_{n\in\N}$ be a sequence of i.i.d.\ random points, uniformly distributed on $[0,1)$, and for $\nu \ge 1$ define the random set
    \begin{equation} \label{random_intersection}
        E\big((X_n),\nu\big) :=\Big\{\delta\in[0,1): \norm{\delta-X_n}\le \frac{1}{n^{\nu}}
        \ \text{for infinitely many } n\in\N\Big\}.
    \end{equation} 

    \begin{remark*}
        If we replaced $(X_n)_{n \in \mathbb{N}}$ by the sequence of Farey fractions, then the above coincides (up to a constant) with $W_{2\nu}$ since there are $\asymp N^2$ Farey fractions with denominator $\leq N$, and the Jarnik--Besicovitch Theorem implies that
        $\dimH W_{2\nu} = \mfrac{1}{\nu}$. The almost sure size of $E((X_n),\nu)\cap K$ stands therefore as a model for the size of $W_{2\nu}\cap K$, unless the intrinsic contribution, i.e., approximating with rationals from $K$,  gives a bigger contribution. For more context on the matter, we refer to \cite{BD} and \cite{he2026}.
    \end{remark*}

    Bugeaud and Durand~\cite{BD} computed the almost sure Hausdorff dimension of the intersection of~\eqref{random_intersection} with the Cantor set.
    More generally, their result applies to any \textit{Ahlfors regular} compact set $G$ of dimension $s\in(0,1]$. We postpone the exact definition to Section~\ref{sec_fractal}, and only mention here that all standard sets with missing digits are Ahlfors regular. Precisely, it was proven in~\cite{BD} that almost surely
    \begin{equation}\label{inter_dim}
        \dimH\big(E((X_n),\nu)\cap G\big)
        = \frac{1}{\nu}+\dimH(G)-1,
    \end{equation} when this value is non-negative; otherwise, this set is almost surely empty.

    The method of~\cite{BD} also allowed them to obtain a similar result when the random points are replaced by points of the form $\{q_n x\}$, where $(q_n)_{n\in\N}$ is a rapidly growing integer sequence and $x$ is Lebesgue-generic. More precisely, \eqref{inter_dim} continues to hold with $E((X_n),\alpha)$ replaced by the set
    \begin{equation}\label{def_E_set}
        E\big((q_n),x,\nu\big)
        :=\Big\{\delta\in[0,1]: \norm{\delta-q_n x}\le \frac{1}{n^{\nu}} \ \text{for infinitely many } n\in\N\Big\},
    \end{equation} subject to the assumption\footnote{We note that it is apparent from their proof and the discussion in \cite[p.1259]{BD}, that this assumption could be replaced by the (slightly) weaker condition of $\liminf_{n \to \infty}\frac{\log(q_{n+1}/q_n)}{\log n} > 1 + \frac{1}{1-s}$, with the right-hand side being interpreted as $\infty$ for $s = 1$.}
    \begin{equation}
    \label{superlac_BD}
    \lim_{n\to\infty}\frac{\log(q_{n+1}/q_n)}{\log n}=\infty.
    \end{equation}
    Note that the growth assumption \eqref{superlac_BD} is very restrictive: it forces $(q_n)_{n\in \bN}$ to grow at least on the scale $q_n=\lfloor n^{b_n n}\rfloor$ for some sequence $b_n\to\infty$. 
    This growth-rate condition ensures that one satisfies for all Borel sets $(B_i)_{i \in \mathbb{N}}$,
    \begin{equation}\label{bd_uncorr_cond}
        \frac{\mathbb{P}[q_1x \in B_1, \ldots ,q_N x \in B_N]}{\mathbb{P}[q_1 x \in B_1]\cdots \mathbb{P}[q_N x \in B_N]} < C < \infty.
    \end{equation}
    In our approach, we circumvent restrictive $N$-correlation assumptions such as \eqref{bd_uncorr_cond}, and replace this with bounds on certain conditional second moments.
    This allows us to reduce the growth-rate requirement to a certain sub-lacunary growth, extend it to real-valued sequences, and, additionally, extend the result to \textit{all} integer-valued sequences satisfying certain restrictions on their prime factorization. In order to keep the statement as general as possible, we formulate our next theorem for integer sequences in terms of a gcd-sum bound:

    \begin{thm}\label{thm4}
    Let $G$ be an Ahlfors regular set of dimension $s\in(0,1]$, and fix $\nu\ge 1$. When $1 \le \nu \le \frac{1}{1-s}$ (and for any $\nu \ge 1$ if $s=1$), one has: \\
    
    (1) Let $(q_n)_{n \in \bN}$ be an increasing sequence of integers satisfying the following: there exists a non-decreasing positive function $f(x) \ll x^{o(1)}$ satisfying $f(2x) \le c f(x) \ \forall x \in \bR$ for some fixed $c > 0$ and an index set $\bI:=\{n_k:k\ge 1\}\subseteq\N$ such that $\#(\bI\cap [N]) \asymp \frac{N}{f(N)}$, and an arbitrarily slowly growing function $\psi(n) \to \infty$ such that for all sufficiently large $N$ one has
    \[
        \sum_{N<k \le m\le 2N}
        \min\bigg[\frac{\gcd(q_{n_m},q_{n_k})}{q_{n_m}} \min\Big( \log \frac{q_{n_m}}{q_{n_k}}, \log \log N \Big) ,\, N^{1-\nu-\eps\nu}\bigg]
        \le \frac{N^{2-\nu}}{\psi(N) f^{\nu} (N)}.
    \]
   
    Then for Lebesgue-a.e.\ $x\in[0,1]$ one has
    \begin{equation}\label{dim_intersec}
    \dimH\bigl(E((q_n),x,\nu)\cap G\bigr)
    = \frac{1}{\nu}+\dimH(G)-1,
    \end{equation} when this value is non-negative; otherwise, this set is empty. Here $E((q_n),x,\nu)$ is defined in~\eqref{def_E_set}. \\

    (2) Let $(q_n)_{n\in\bN}$ be an increasing real-valued sequence satisfying the gap condition
    \[
    \frac{q_{n+1}}{q_n} > 1 + \frac{1}{\Phi(n)},
    \] where $\Phi(n)$ is any increasing function such that $\Phi(n) \ll n^{o(1)}$. Then for Lebesgue-a.e. $x \in [0, 1)$ the same implication holds. \\ 
    
    Otherwise, when $\nu > \frac{1}{1-s}$, for any real-valued sequence $(q_n)_{n \in \bN}$ the set in~\eqref{dim_intersec} is empty for Lebesgue-a.e.\ $x \in [0,1]$.
    \end{thm}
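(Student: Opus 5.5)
The proof splits into an upper bound, valid for every real sequence and every $x$, and a lower bound, valid for Lebesgue-a.e.\ $x$. The lower bound carries the substance and reuses the percolation/random-colouring machinery behind Theorem~\ref{thm2}, now run on the $M$-adic tree of $G$ instead of the binary tree of $[0,1)$.

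\textbf{Upper bound and emptiness.} Let $M\ge2$ be the contraction base of $G$. Ahlfors regularity gives that $G$ meets at most $\ll (M^{j}r)^{s}$ intervals of the level-$j$ $M$-adic grid inside any interval of length $r$; in particular $G$ meets $\asymp M^{js}$ grid intervals in total. Cover $E((q_n),x,\nu)\cap G$, for every $N$, by
\[
\bigcup_{n\ge N}\big(G\cap B(q_nx,n^{-\nu})\big).
\]
Each set $G\cap B(q_nx,n^{-\nu})$ is covered by $\ll 1$ balls of radius $n^{-\nu}$. This gives the covering cost
\[
\sum_n n^{-\nu t}.
\]
That bound is too weak, so I would refine it by grouping $n\in[2^j,2^{j+1})$ and using the scale $n^{-\nu}$ grid intersected with $G$. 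The points $q_nx$ number $2^j$, but only those within distance $2^{-j\nu}$ of $G$ matter. That count is at most $2^j\cdot \mathrm{Leb}\big(G_{2^{-j\nu}}\big)\cdot O(1)$ on average, and this is the one place where a.e.\ $x$ is used. Equidistribution is not required: Fubini suffices, since $\int \#\{n\sim 2^j: \mathrm{dist}(q_nx,G)\le 2^{-j\nu}\}\,dx\ll 2^j 2^{-j\nu(1-s)}$ for any sequence with $q_n\ge1$. Summing $2^{j}2^{-j\nu(1-s)}2^{-j\nu t}$ over $j$ converges when $t>\frac1\nu+s-1$, and Borel--Cantelli upgrades this to a.e.\ $x$. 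When $\nu>\frac1{1-s}$ the expected count itself is summable, so a.e.\ $x$ has only finitely many such $n$ and the limsup set is empty.

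\textbf{Lower bound via a tree/mass-transference argument.} Fix the target $t<\frac1\nu+s-1$. I would build a Cantor subset of $E\cap G$ by the standard nested construction. Work along dyadic blocks $n\in[N,2N]$ restricted to the index set $\bI$, or to all $n$ in case (2). Inside each $G$-cylinder $J$ of the current level, I need, for a.e.\ $x$, that the number of $n$ in the block with $q_nx$ landing in the $n^{-\nu}$-neighbourhood of a point of $G\cap J$ is close to its expectation $\asymp |J|^{s}\,N^{1-\nu(1-s)}/f(N)^{\dots}$. I also need these hits to be well spread at scale $N^{-\nu}$, so that a mass distribution supported on the balls can be defined. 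The first moment is computed directly.

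\textbf{Main obstacle: second moments, uniform over cylinders.} The second moment is controlled by the pair correlations
\[
\int \mathbf 1_{A}(q_mx)\mathbf 1_{A}(q_kx)\,dx.
\]
Expanding $\mathbf 1_A$ in a Fourier series truncated at frequency $N^{\nu(1+\eps)}$ reduces these to counting solutions of $aq_m=bq_k$ with $|a|,|b|\le N^{\nu(1+\eps)}$. For integers that count is governed by $\gcd(q_m,q_k)/q_m$. The extra $\min(\log(q_m/q_k),\log\log N)$ factor comes from summing $1/|a|$ over the resonant frequencies, and the truncation term produces the $N^{1-\nu-\eps\nu}$ cap. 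The gcd-sum hypothesis is therefore exactly the statement that the variance is $o(\text{mean}^2)$, with a saving $\psi(N)$. For case (2), the real gap condition with $\Phi(n)=n^{o(1)}$ gives the analogous bound on $\int e((aq_m-bq_k)x)\,dx\ll \min\big(1,1/|aq_m-bq_k|\big)$ directly. Chebyshev combined with a union bound over the $\ll N^{O(1)}$ cylinders is too lossy to give a.e.\ statements, so I would use the conditional second moments: condition on the coarse position of $x$ determined by earlier blocks, which is the ``percolation'' step, and apply Borel--Cantelli along $N=2^j$.

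With this in hand, the mass distribution principle together with Ahlfors regularity yields local dimension $\ge t$. This is the step I expect to be most delicate, since the saving $\psi(N)$ is only arbitrarily slow and the losses must be kept within $N^{o(1)}$, absorbed through the $f(N)$ and $\eps$ slack.
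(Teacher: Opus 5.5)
Your upper bound and emptiness argument is correct. It is the Fubini/first-moment argument behind Bugeaud--Durand's Theorem~3.1, which the paper simply cites. Two small corrections: it holds for a.e.\ $x$, not for every $x$ as your opening sentence says, and it needs no ``contraction base'' of $G$, only $\lambda(G_r)\ll r^{1-s}$ from Ahlfors regularity.

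The gap is in the lower bound, which is the real content of the theorem. A direct nested Cantor construction with the mass distribution principle needs, for a.e.\ $x$, two things. First, hit counts close to their means \emph{simultaneously} in (almost) all cylinders over many scales. Second, well-spreadness of the hits. Your sketch names this requirement but gives no mechanism for it.

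Your proposed device, conditioning on the coarse position of $x$ from earlier blocks as in Theorem~\ref{thm2}, handles dependence \emph{between} blocks. It does nothing for simultaneity over cylinders \emph{within} a block. For integer sequences it also destroys exactly what the gcd hypothesis controls. After conditioning on $x\in D$, with $D$ a union of intervals of length $\ell$, the second moment picks up all near-resonances $0<|h_1q_M-h_2q_N|\lesssim \ell^{-1}$. The hypothesis only counts exact solutions of $h_1q_M=h_2q_N$, which come from orthogonality over all of $[0,1]$. In Theorem~\ref{thm2} this was harmless only because the buffer blocks force $q_N/q_M\ge N^{100}$ across levels.

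Finally, Borel--Cantelli along all $N=2^j$ fails. For an arbitrarily slowly growing $\psi$ the series $\sum_j\psi(2^j)^{-1}$ may diverge, so you must pass to a sparse sequence of levels.

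The missing idea is the Khoshnevisan--Peres--Xiao lemma (Lemma~\ref{34}), which removes the need for any mass distribution. It suffices to show that for each \emph{fixed} compact $F$ with $\dimH F>1-\frac1\nu$ one has $E\cap F\neq\emptyset$ for a.e.\ $x$. The paper's argument for this runs as follows.
\begin{enumerate}
\item Take a Frostman measure on $F$, so that below a fixed vertex $I_0$ there are $\#K_n(F)\gg 2^{(1-1/\nu+\eps)n}$ level-$n$ $F$-intervals. Place $\asymp L2^{n/\nu}$ points on level $n$.
\item Bound the variance of the single global weighted count $C_n$ of points in these intervals. For each pair $(M,N)$, take the minimum of the orthogonality bound, which gives the gcd term, and the trivial bound $\frac{2^n}{\#K_n(F)\tilde N_n^2}$ coming from $\sum_k g^-(\cdot)\le 1$.
\item The trivial bound is where the cap $N^{1-\nu-\eps\nu}$ comes from, with $\eps$ the dimension excess of $F$. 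It is not a Fourier truncation artefact as you suggest.
\item Chebyshev gives $\lambda(C_n=0)\ll 1/\psi$. Borel--Cantelli along sparse levels, plus countability of the vertices $I_0$, gives a path with infinitely many $EF$-vertices, hence a point of $E\cap F$.
\end{enumerate}

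If you want to keep a direct construction, you would have to do three things, none of which is in the proposal:
\begin{enumerate}
\item Prove per-cylinder second moments. The same hypothesis gives these, but only for cylinders whose expected count is at least a small power of $N$.
\item Absorb union bounds over parent cylinders by choosing stages so sparse that $\psi(N_i)$ dominates $N_{i-1}^{O(1)}$. This is what makes the union bound you called too lossy affordable.
\item Enforce well-spreadness by pruning at boundedly many intermediate scales.
\end{enumerate}
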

    
    \begin{remark*}
    \item
    \begin{itemize}
        \vspace{1ex}
        \item[(i)] We note that the result in the regime $\nu > \frac{1}{1-s}$ (where the set is empty) follows directly from the argument of Bugeaud and Durand~\cite{BD}. Moreover, when $\nu \le \frac{1}{1-s}$, their argument gives the correct upper bound on the dimension of the intersection for any real-valued sequence $(q_n)_{n \in \bN}$
    --- see Lemma \ref{packing_dichotomy} for details. Therefore, our main work is to establish the matching lower bound in the regime $1 \le \nu \le \frac{1}{1-s}$ (and for all $\nu \ge 1$ when $s=1$), which is where the technical conditions of Theorem~\ref{thm4} come into play. \\
    
    \item[(ii)] We note that most parts of Theorem \ref{thm4} could also be established for more general gauge functions than simply the dimension functions $x \mapsto x^h$, comparably to how it was done in the work of Bugeaud and Durand \cite{BD}. While the same argument applies, the necessary modification in the gcd-sum bound would make the general statement even more technical, so we omit it for the sake of readability. \\

    \item[(iii)] It follows from standard procedures (monotonicity of 
    $\dimH\big(E((X_n),\nu)\cap G\big)$, countability and density of $\mathbb{Q}$, and countable intersections of full measure sets being full that \eqref{dim_intersec} holds \textit{uniformly} in $\nu$ (as long as the requirements are met) on a set of full measure. 
    \end{itemize}
    \end{remark*}

    Part (2) of Theorem \ref{thm4} essentially requires the real-valued sequence to be almost lacunary; for example, it applies to any sequence of the form $q_n = \exp(n^{1-o(1)})$. Part (1) allows one to treat much slower growing integer-valued sequences, but its restrictions may in general be difficult to verify. Nevertheless, we demonstrate how the theorem applies in several important special cases:
    \begin{cor}\label{cor2}
        Let $G,s$ and $\nu$ be as in Theorem~\ref{thm3}, with $1 \le \nu \le \frac{1}{1-s}$ (and any $\nu \ge 1$ if $s=1$). Then \eqref{dim_intersec} holds for the following choices of $(q_n)_{n \in \mathbb{N}}$:
        \vspace{1ex}
        \begin{enumerate}
        \item[(i)] \emph{Prime powers:} $q_n=p_n^d$ for any $d \ge \nu$, $d \in \bN$;
        \item[(ii)] \emph{Monomials:} $q_n = n^d$ for any $d > \nu, d \in \bN$;
        \item[(iii)] \emph{Polynomials:} $q_n=P(n)$ for $P\in\bZ[x]$ with $\deg P > \nu+1$.
    \end{enumerate}
    \end{cor}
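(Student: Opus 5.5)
The plan is to deduce Corollary~\ref{cor2} from part (1) of Theorem~\ref{thm4} in each of the three cases. Case (iii) needs a small reduction first. Since $P$ has positive leading coefficient, $(P(n))$ is eventually increasing. Discarding finitely many terms does not change $E((q_n),x,\nu)$, up to the index shift $n\mapsto n+O(1)$, which only changes the radius by a constant factor and therefore does not affect the dimension. In cases (ii) and (iii) I will take $\bI=\N$ and $f\equiv 1$. In case (i) I will take $\bI=\N$ and $f(N)=\log N$ if that turns out to help, but more likely $f\equiv 1$ as well.

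The whole task is then to verify the gcd-sum bound
\[
S(N):=\sum_{N<k\le m\le 2N}\min\Big[\frac{\gcd(q_m,q_k)}{q_m}\min\Big(\log\frac{q_m}{q_k},\log\log N\Big),\,N^{1-\nu-\eps\nu}\Big]\le \frac{N^{2-\nu}}{\psi(N)}.
\]
I will split $S(N)$ into the diagonal terms $k=m$ and the off-diagonal terms $k<m$. For $k=m$ the inner factor $\log(q_m/q_k)$ vanishes, so the diagonal contributes nothing. Note, however, that the cap $N^{1-\nu-\eps\nu}$ summed over $N$ diagonal terms would already give $N^{2-\nu-\eps\nu}$, which is acceptable with $\psi(N)=N^{\eps\nu}$. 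For $k<m$ I bound $\min(\cdot)\le\log\log N$ and use $q_m\asymp N^{d}$, respectively $\asymp (N\log N)^d$ for primes. It then remains to estimate $\sum_{k<m}\gcd(q_m,q_k)$.

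\emph{Prime powers.} For $k\neq m$ we have $\gcd(p_m^d,p_k^d)=1$. The off-diagonal sum is therefore at most $N^2\log\log N/(N\log N)^d$, which is $\le N^{2-\nu}/\psi(N)$ because $d\ge\nu$ and the factor $(\log N)^{-d}$ beats $\log\log N$.

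\emph{Monomials.} Here $\gcd(m^d,k^d)=\gcd(m,k)^d$. Writing $g=\gcd(m,k)$ and $h=m-k$, we have $g\mid h$, and the number of pairs with a given $h$ is $\le N$. Hence
\[
\sum_{k<m}\gcd(m,k)^d\le N\sum_{h\le N}\sum_{g\mid h}g^{d}\ll N\cdot N^{d+1}\cdot O(1),
\]
using $\sum_{h\le N}\sigma_d(h)\ll N^{d+1}$. This is too crude: it yields $S(N)\ll N^{2}\log\log N$ after dividing by $N^d$, which is useless. I therefore bring in the cap. I split the pairs according to whether $g^d/m^d\ge N^{1-\nu-\eps\nu}$, i.e. $g\ge N^{1+(1-\nu-\eps\nu)/d}=:N^{\theta}$ with $\theta<1$; such pairs get the cap. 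The number of pairs with $g\ge G$ is $\ll\sum_{g\ge G}(N/g)^2\ll N^2/G$. These pairs contribute $\ll N^{2-\theta}\cdot N^{1-\nu-\eps\nu}=N^{2-\nu-\eps\nu+(\nu-1+\eps\nu)/d}$. The pairs with $g<N^{\theta}$ contribute $\ll N^{-d}\log\log N\sum_{g<N^\theta}(N/g)^2g^d\ll N^{2-d+\theta(d-1)}\log\log N$. Both bounds are $\le N^{2-\nu-\eta}$ for some $\eta>0$ exactly when $d>\nu$, after choosing $\eps$ small. This is where the strict inequality $d>\nu$ enters.

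\emph{Polynomials.} Replacing $\gcd(m,k)$ by $\gcd(P(m),P(k))$ is the main obstacle, since this gcd is no longer controlled by $\gcd(m,k)$. My approach is to use $\gcd(P(m),P(k))\le\gcd(P(m),P(k)-P(m))$ together with $(m-k)\mid P(m)-P(k)$. Writing $P(m)-P(k)=(m-k)Q(m,k)$, we get $\gcd\le (m-k)\cdot\gcd(P(m),Q(m,k))$. I will bound the second factor on average via a resultant argument: the resultant of $P$ and $Q$ in one variable is a nonzero polynomial of bounded degree, apart from a finite set of exceptional configurations handled separately. This loses one power of $N$ compared with the monomial case, which is consistent with the hypothesis $\deg P>\nu+1$ instead of $>\nu$. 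Concretely, I will show
\[
\#\{(k,m):\gcd(P(m),P(k))\ge G\}\ll N^{2+o(1)}\cdot N/G,
\]
and then run the same two-range split as for monomials with the exponent shifted by one. I expect most of the work to lie in this divisor-counting estimate.
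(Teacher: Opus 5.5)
Your part (i) is correct and matches the paper. Parts (ii) and (iii) have genuine gaps with a common root: with $\bI=\N$ the gcd sum is genuinely too large once $\nu>1$, so you must pass to a sparser index set.

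\textbf{Part (ii).} Your monomial computation actually shows this, but its conclusion is wrong. The capped pairs contribute $N^{2-\nu-\eps\nu+(\nu-1+\eps\nu)/d}$, which is below $N^{2-\nu}$ if and only if $\eps\nu(d-1)>\nu-1$. The uncapped range $g<N^\theta$ gives exactly the same condition. The split at $N^\theta$ is the balancing point, so this reflects the true size of the sum and is not an artefact of the split.

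Small $\eps$ therefore hurts rather than helps, and you cannot take $\eps$ large. It is the excess $\dimH F-(1-\frac1\nu)$ in the Khoshnevisan--Peres--Xiao argument, so the hypothesis of Theorem~\ref{thm4} must hold for every small $\eps>0$.

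Concretely, take $\nu=2$, $d=3$. There are $\asymp N^{4/3}$ pairs with $\gcd(m,k)\ge N^{2/3}$. Those with $m/k\ge 1.1$, a positive proportion, each contribute the full cap $N^{-1-2\eps}$. This gives at least $N^{1/3-2\eps}$, far above $N^{2-\nu}=1$. So your argument proves (ii) only for $\nu=1$.

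The paper instead takes $\bI$ to be the primes with $f(N)=\log N$. Then $\gcd(p^d,p'^d)=1$ off the diagonal, and (ii) reduces to (i). When $d=\nu$, the loss $f^\nu=(\log N)^\nu$ exactly cancels the gain $(\log N)^{-d}$ and leaves the factor $\psi(N)\log\log N$ unabsorbed. That is why the strict inequality $d>\nu$ is required.

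\textbf{Part (iii).} Your key estimate $\#\{(k,m):\gcd(P(m),P(k))\ge G\}\ll N^{3+o(1)}/G$ is false already for $P(x)=x^d$, which is a legitimate case of (iii). The pairs $(k,m)=(2j,3j)$ with $j\asymp N$ give $\asymp N$ pairs with $\gcd\asymp N^d$, whereas your bound predicts $N^{3-d+o(1)}$ with $d\ge 3$.

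The resultant does not help. $\gcd(P(m),Q(m,k))$ only divides $\mathrm{Res}_x(P(x),Q(x,k))$, a polynomial in $k$ of size $N^{O(1)}$; for $P=x^d$ it is $\pm k^{d(d-1)}$. More fundamentally, the computation above shows that no argument with $\bI=\N$ can handle $P(x)=x^d$ when $\nu>1$.

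The missing idea is the paper's sieve step:
\begin{enumerate}
\item[(a)] Construct $\cM\subseteq P([\frac N2,N])$ with $\#\cM\ge N/(\log N)^{O(1)}$, such that $\tau(m)\ll_P 1$ for $m\in\cM$ and $\rho_P(e)\ll_P 1$ whenever $e\mid m\in\cM$.
\item[(b)] Then $\#S_e\ll\rho_P(e)\ll 1$ for $e>N$, and $\sum_{e>N}\#S_e\le\sum_{m\in\cM}\tau(m)\ll N$. Hence only $O_P(N)$ pairs in $\cM^2$ have $\gcd>N$.
\item[(c)] These pairs, together with the diagonal, are absorbed by the cap $N^{1-\nu-\eps\nu}$.
\item[(d)] The remaining pairs contribute $\ll(\#\cM)^2N^{1-d}\log\log N$, which is admissible exactly when $d>\nu+1$.
\end{enumerate}
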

    
    To mention some important special cases, the above shows that when $\nu=1$, we obtain the precise Hausdorff dimension for primes and for any polynomial sequence of degree at least~$3$, for every Ahlfors regular set~$G$. When $G=K$ is the middle-third Cantor set, we obtain the precise dimension~\eqref{dim_intersec} for all $\nu$ and for any integer-valued polynomial of degree at least~$4$. Finally, when $G=[0,1)$, we obtain, for example,
    \[
        \dimH \Big\{ \delta \in [0,1): \norm{q_n x-\delta} \le \frac{1}{n^{\nu}} \text{ for inf. many } n \in \bN \Big\} = \frac{1}{\nu},
    \] for $q_n=n^2$, for any polynomial of degree~$> 2$, and for any real-valued lacunary sequence (including $q_n=2^n$). This gives a vast generalization of the result of Fan, Schmeling, and Troubetzkoy~\cite{FST} in the Lebesgue measure case.

    The proof of (i) is a straightforward application of the Prime Number Theorem: For $q_n=p_n^d$ we have for every $\varepsilon > 0$ and $N$ sufficiently large
    \[
        \sum_{N<m=k\le 2N} N^{1-\alpha-\varepsilon\alpha} \le N^{2-\alpha-\varepsilon\alpha},
    \] and
    \[
        \sum_{N< k \le m \le 2N} \frac{\gcd(p_{n_m}^d,p_{n_k}^d)}{p_{n_m}^d}
        \ll \sum_{N<m\neq k\le 2N} \frac{1}{(N\log N)^d}
        \ll \frac{N^{2-d}}{(\log N)^d}.
    \] 
    The proof of (ii) directly follows from (i): we take $\bI=\{p_n:n\ge 1\}$ as the index set; by the prime number theorem it satisfies $\#(\bI\cap [1,N]) \asymp \frac{N}{\log N}$, and we apply the previous estimate.
    The proof of~(iii) requires some input from sieve theory, so we postpone it until Section~\ref{sec_fractal}.\\
    
    We further note that any integer sequence without any special arithmetic structure, in the sense that it contains a sufficiently dense subsequence of primes (as would follow from an appropriate prime number theorem for the sequence), fits the hypotheses of Theorem~\ref{thm3}. For example, the direct application of the strategy for the proof of~(ii) would give the same result for~(iii) with $\deg~P > \nu$ for irreducible polynomials without fixed prime divisors under the assumption of the widely open Bateman-Horn Conjecture \cite{BH62}. In the actual proof for~(iii) we circumvent this assumption by showing the weaker statement that for an absolute $C > 0$ there are sufficiently many $n \in \bN$ with $\tau(P(n)) < C$, where $\tau$ is the divisor function.

    Another natural example would be to consider the Piatetski-Shapiro sequences $q_n=\lfloor n^c\rfloor$, $c>1,\ c\notin\bN$. Heuristically, one expects that the number of primes of this form up to~$N$ should satisfy
    \begin{equation} \label{PS_asymp}
        \sum_{\substack{p \le N \\ p = \floor{n^c}}} 1 \sim \frac{N^{\frac{1}{c}}}{\log N}.
    \end{equation} Thus, for example, when $G=K$ is the middle-third Cantor set, one expects that any sequence $q_n=\lfloor n^c\rfloor$ with $c \ge (1-\frac{\log 2}{\log 3})^{-1}=2.709511\ldots$ would satisfy the assumptions of Theorem~\ref{thm4}. Unfortunately, even the infinitude of primes in these sequences is currently known only for $c\le 1.185365\ldots$~\cite{Rivat_Wu}. Note that the hypothesis required in Theorem~\ref{thm4} is much weaker than~\eqref{PS_asymp}. Nevertheless, we leave this question open.

    \subsection*{Structure of the paper}

    All main statements in this work rely on a newly developed colored binary tree approach that might be of independent interest. The argument admits a natural percolation interpretation on the dyadic tree of sub-intervals of $[0,1)$. Namely, after assigning to each level $n$ a dyadic block of times $m \asymp 2^n$, one colors a vertex corresponding to the dyadic interval $I_{n,k} = [\mfrac{k}{2^n}, \frac{k+1}{2^n})$ whenever some center $\{q_mx\}$ from $m \asymp 2^n$ falls into $I_{n,k}$; in this language, covering (the upper bound) corresponds to extinction of infinite uncolored rays, whereas non-covering (the lower bound) is driven by the survival of a thick uncolored ray. We use ``path'' throughout, rather than ``ray'', for infinite downward paths in the tree. 
    
    In Section~\ref{sec_binary} we introduce this colored tree framework in details and sketch how this leads to a short proof of a weaker form of Shepp's result in the i.i.d.\ case, providing a blueprint for the proofs of Theorems~\ref{thm1} and~\ref{thm2}. In Section~\ref{sec_upper} we prove Theorem~\ref{thm1} by combining the colored-tree setup with a suitably formulated Borel--Cantelli lemma and Fourier analysis, and then deduce Corollary~\ref{cor1} using the (by now standard) mass distribution principle as in Pollington--Velani's work. In Section~\ref{sec_lower} we prove Theorem~\ref{thm2} using a variant of a branching random walk together with conditional second moment estimates. In Section~\ref{sec_fractal} we prove Theorem~\ref{thm4} by combining a Baire category argument with second moment estimates, and deduce Corollary~\ref{cor2}~(3) using an input from sieve theory. We conclude the paper with an appendix proving Theorem~\ref{thm3} for the randomized Littlewood--Cassels problem.

    \subsection*{Notation}
    
    Throughout, we write $e(x)=e^{2\pi i x}$. The relations $f(x)\ll g(x)$ and $f(x)=O(g(x))$ mean that there exists a constant $c>0$ such that $|f(x)|\le c\,g(x)$ for all sufficiently large $x$. The relation $f(x)=o(g(x))$, with $g(x)\neq 0$, means that $\frac{f(x)}{g(x)}\to 0$ as $x\to\infty$. The relation $f(x)\asymp g(x)$ means that $f(x)=O(g(x))$ and $g(x)=O(f(x))$. Next, $\lfloor x \rfloor$ denotes the greatest integer not exceeding $x$, and $\{x\}=x-\lfloor x \rfloor$ is the fractional part of $x$. Further, $\bP$ denotes the probability governing the i.i.d.\ points on $[0,1)$, and $\lambda$ denotes Lebesgue measure on $[0, 1)$. Given a positive integer $N\gs 1,$ we write $[N]$ for the set of integers $\{ 1, \ldots, N\}$. Finally, $\log x$ denotes $\log_2 x$ unless a different base is clear from the context, and $\log^{(k)} x$ denotes the $k$-iterated logarithm in base $2$.

    \subsection*{Acknowledgements} 
    
    This work was funded in whole, or in part, by the Austrian Science Fund (FWF). MH was supported by FWF project 10.55776/ESP5134624, AS was supported by FWF project 10.55776/ESP531, ES was supported by FWF projects 10.55776/P35322 and 10.55776/PAT5120424, AZ was supported by FWF-ANR project Arithrand (I 4945-N and ANR-20-CE91-0006) and by FWF Project 10.55776/PAT3862225. We are grateful to Christoph Aistleitner for his comments on the first version of this paper. \\

    %SECTION 2: Tree setup + random models
    
    %%%%%%%%%%%%%%%%%%%%%%%%%%%%%%%%%%%%%%%%%%%%%%%%%
    
    \section{Colored tree framework} \label{sec_binary}

      \subsection*{Setup}
    
The proofs of Theorems~\ref{thm1}, \ref{thm2}, and~\ref{thm4} fit into a general framework of random branching on a tree, which is natural for studying covering problems and fractal sets. In this paper it will be more convenient to work with the notion of \textit{random coloring} of the tree rather than \textit{random branching}, although the two are, in a sense, equivalent.

In this section, we give a detailed description of the tree-coloring process and explain how it can be used to detect whether a sequence of centers and lengths covers intervals. For random uniform i.i.d.\ centers, we sketch the proofs of the following statements:

(i) infinite covering holds almost surely for interval lengths $\ell_N=\frac{2026}{N}$;

(ii) infinite covering does not hold almost surely for interval lengths $\ell_N=\frac{1}{2026N}$.

These statements are, of course, weaker than Shepp's result, or even earlier results of Billard and Kahane, but they suffice as an illustration before the main proofs.

    Partition the interval $[0,1)$ into dyadic subintervals
    \[
        I_{n,k} := \Big[ \frac{k}{2^n}, \frac{k+1}{2^n} \Big), \qquad  0 \le k < 2^n,
    \] for $n=0,1,2,\dots$. They naturally give rise to an infinite binary tree, where each $I_{n,k}$ represents a vertex. The index $n$ corresponds to a \textit{level} of the tree, and $k$ is the index of a vertex within the level, as illustrated in Figure~\ref{figure1}.

    \begin{figure}[ht!]
    \centering
    \begin{tikzpicture}[scale=0.7,transform shape,
        level distance=2.0cm,
        level 1/.style={sibling distance=5.5cm},
        level 2/.style={sibling distance=3cm},
        every node/.style={circle, draw=black, fill=white, minimum size=5mm, inner sep=0pt},
        label distance=2mm
    ]
    
        \node[label=right:{$I_{0,0}$}] {}
            child {node[label=right:{$I_{1,0}$}] {}
                child {node[label=right:{$I_{2,0}$}] {}}
                child {node[label=right:{$I_{2,1}$}] {}}
            }
            child {node[label=right:{$I_{1,1}$}] {}
                child {node[label=right:{$I_{2,2}$}] {}}
                child {node[label=right:{$I_{2,3}$}] {}}
            };
    
    \end{tikzpicture}
    \caption{Binary tree representing the partition of $[0,1)$ into nested dyadic intervals}
    \label{figure1}
    \end{figure}
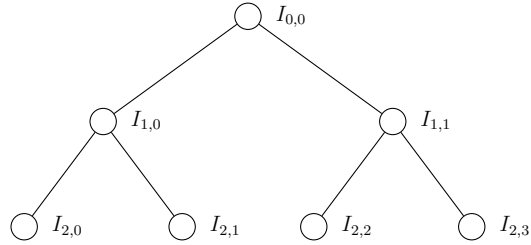

Each real number $\delta \in [0,1)$ corresponds to an infinite vertical path down the tree. This correspondence is unique, except when $\delta$ is rational: in that case, $\delta$ has two binary expansions,
$\delta = 0.a_1\ldots a_k 1000\ldots$ and $\delta = 0.a_1\ldots a_k 0111\ldots$, and hence can be represented by two distinct paths. For definiteness, we always choose the first one.

We now describe how a sequence of points $(X_N)_{N \in \bN}$ on $[0,1)$ gives rise to a tree coloring. Given a sequence of points $(X_N)_{N \in \bN}$ and a sequence of non-negative integers $(N_n)_{n\in\bN}$, we define a \textit{colored binary tree}
\[
    \cT_2 = \cT_2\big( (X_N)_{N \in \bN}, (N_n)_{n\in\bN}\big)
\] as follows: at each level $n = 0,1,2,\ldots$ we place  the points $X_N$ with indices $N$ in the range
    \begin{equation} \label{index_range}
      N_{n-1} < N \ls N_n
    \end{equation} among their corresponding intervals $I_{n,k}$. An interval (vertex) is colored if it contains at least one point, and remains uncolored otherwise (see Figure~\ref{figure2}). In this way, the colored tree is uniquely determined by the sequences $(X_N)_{N \in \bN}$ and $(N_n)_{n\in\bN}$. We note that the choice $N_n=N_{n-1}$ for some $n$ is admissible in this construction, meaning that no points will be placed on the $n$-th level of the tree.

    \vspace{2ex}
    
    \newcommand{\pararrow}[6]{
    \pgfmathanglebetweenpoints{\pgfpointanchor{#1}{center}}{\pgfpointanchor{#2}{center}}
    \let\EdgeAngle\pgfmathresult
    \draw[->, line width=0.45pt]
    ($ (#1)!#3!(#2) + ({\EdgeAngle + #6}:#5) $) --
    ($ (#1)!#4!(#2) + ({\EdgeAngle + #6}:#5) $);
    }
    
    \usetikzlibrary{calc}
    
    \setlength{\abovecaptionskip}{-50pt}
    
    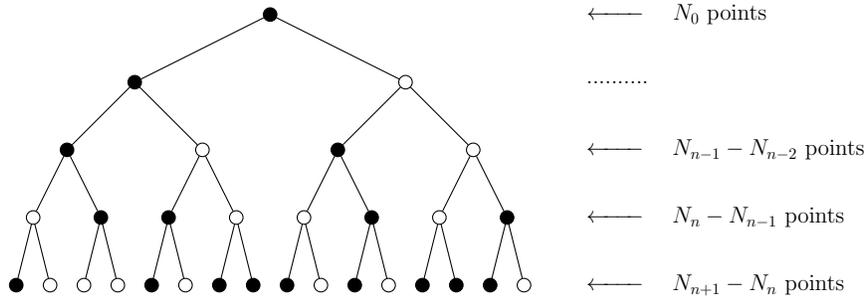
\begin{figure}[ht!]
    \centering\vspace{-8ex}
    \begin{tikzpicture}[scale=0.6,transform shape,
    level distance=1.5cm,
    level 1/.style={sibling distance=6cm},
    level 2/.style={sibling distance=3cm},
    level 3/.style={sibling distance=1.5cm},
    level 4/.style={sibling distance=0.75cm},
    every node/.style={circle, draw=black, minimum size=3mm, inner sep=0pt},
    blacknode/.style={fill=black},
    whitenode/.style={fill=white},
    levelLabel/.style={draw=none, anchor=west, font=\large}
    ]
    
    \node[blacknode] (n1) {}
    child {node[blacknode] (n2) {}
        child {node[blacknode] (n3) {}
            child {node[whitenode] (n4) {}
                child {node[blacknode] (n5) {}}
                child {node[whitenode] (n6) {}}
            }
            child {node[blacknode] (n7) {}
                child {node[whitenode] (n8) {}}
                child {node[whitenode] (n9) {}}
            }
        }
        child {node[whitenode] (n10) {}
            child {node[blacknode] (n11) {}
                child {node[blacknode] (n12) {}}
                child {node[whitenode] (n13) {}}
            }
            child {node[whitenode] (n14) {}
                child {node[blacknode] (n15) {}}
                child {node[blacknode] (n16) {}}
            }
        }
    }
    child {node[whitenode] (n17) {}
        child {node[blacknode] (n18) {}
            child {node[whitenode] (n19) {}
                child {node[blacknode] (n20) {}}
                child {node[whitenode] (n21) {}}
            }
            child {node[blacknode] (n22) {}
                child {node[blacknode] (n23) {}}
                child {node[whitenode] (n24) {}}
            }
        }
        child {node[whitenode] (n25) {}
            child {node[whitenode] (n26) {}
                child {node[blacknode] (n27) {}}
                child {node[blacknode] (n28) {}}
            }
            child {node[blacknode] (n29) {}
                child {node[blacknode] (n30) {}}
                child {node[whitenode] (n31) {}}
            }
        }
    };

    \coordinate (labelx) at (7,0);

    \node[levelLabel] at (labelx |- n1) {$\xleftarrow{\hspace{1cm}} \quad N_0$ points};
    \node[levelLabel] at (labelx |- n2) {..........};
    \node[levelLabel] at (labelx |- n3) {$\xleftarrow{\hspace{1cm}} \quad N_{n-1}-N_{n-2}$ points};
    \node[levelLabel] at (labelx |- n4) {$\xleftarrow{\hspace{1cm}} \quad N_n - N_{n-1}$ points};
    \node[levelLabel] at (labelx |- n5) {$\xleftarrow{\hspace{1cm}} \quad N_{n+1} - N_n$ points};
    
    \end{tikzpicture}
    \vspace{4ex}
    \caption{Coloring the binary tree}
    \label{figure2}
    \end{figure}
    
    \setlength{\abovecaptionskip}{10pt plus 2pt minus 2pt}
    
    \medskip
  
   The infinite covering property now follows from a  property of the colored tree $\cT_2= \cT_2((X_N)_{N=1}^{\infty}, \NN)$ for the specific choice \begin{equation} \label{NN_def} N_n = \lfloor L2^0\rfloor + \cdots + \lfloor L2^{n}\rfloor \qquad (n\gs 1) \end{equation}  where $L>0$ is some parameter. One verifies directly that if there is no \textit{infinite uncolored path} down the tree $\cT_2\big((X_N)_{N\in\N},\NN\big)$, then infinite covering holds with the intervals $(X_N-\mfrac{\ell_N}{2},\, X_N+\mfrac{\ell_N}{2})$ whenever $\ell_N \ge \frac{2L}{N}$. Indeed, assume there are infinitely many colored vertices along any vertical path. Then since any $\delta$ corresponds to such a  path, $\delta$ is infinitely often at distance at most $\frac{1}{2^n}$ from some point $X_N$ with $N$ in the range~\eqref{index_range}. But by \eqref{index_range} we have $N < L2^{n+1}$. Thus, for all sufficiently large $n$ and the corresponding $N$, one has
    \[
        \norm{\delta - X_N} \le \frac{1}{2^n} < \frac{2L}{N}.
    \]  
   The converse statement is not true in general: the existence of an infinite uncolored path does not imply non-covering. Indeed, it is possible to have such an uncolored path for which, infinitely often, the left or the right neighbouring vertex is colored. In that case, the number $\delta$ corresponding to the path can still be infinitely often very close to $X_N$, for instance satisfying $\norm{\delta - X_N} \le \mfrac{1}{N^{100}}$. This is why for non-covering, we require a stronger property: namely, the existence of a \textit{thick} infinite uncolored path in $\cT_2\big((X_N)_{N\in\N}, \NN\big)$ --- a path for which the left and right neighbours of each vertex are also uncolored (see Figure~\ref{figure3}). 
    
    One can then check directly that the existence of such a thick path implies the existence of a point $\delta \in [0,1)$ that is at least $\frac{L-\eps}{N}$ away from every $X_N$, for arbitrarily small $\eps>0$ and all sufficiently large $N = N(\eps)$. Indeed, in this case, from~\eqref{index_range} we have
    \[
        L 2^n - (n+1) < N \ \ \Longrightarrow \ \ \frac{L-\eps}{N} < \frac{1}{2^n} \ \ \Longrightarrow \ \ \norm{X_n - \delta} \ge \frac{1}{2^n} > \frac{L-\eps}{N}.
    \]

\begin{figure}[ht!]
\centering
\begin{tikzpicture}[
  scale=0.75,
  x=6mm, y=10mm,
  blk/.style={circle,draw,fill=black,minimum size=2.0mm,inner sep=0pt},
  wht/.style={circle,draw,fill=white,minimum size=2.0mm,inner sep=0pt},
  edge/.style={line width=0.6pt}
]

\newcommand{\brsc}{0.6}

\newcommand{\Lsqbracket}[1]{%
  \draw[line width=0.6pt]
    ($(#1)+(0,\brsc*0.35)$) -- ($(#1)+(0,-\brsc*0.35)$)
    ($(#1)+(0,\brsc*0.35)$) -- ($(#1)+(\brsc*0.18,\brsc*0.35)$)
    ($(#1)+(0,-\brsc*0.35)$) -- ($(#1)+(\brsc*0.18,-\brsc*0.35)$);
}
\newcommand{\Rsqbracket}[1]{%
  \draw[line width=0.6pt]
    ($(#1)+(0,\brsc*0.35)$) -- ($(#1)+(0,-\brsc*0.35)$)
    ($(#1)+(0,\brsc*0.35)$) -- ($(#1)+(-\brsc*0.18,\brsc*0.35)$)
    ($(#1)+(0,-\brsc*0.35)$) -- ($(#1)+(-\brsc*0.18,-\brsc*0.35)$);
}

\foreach \i in {0,...,3} {
  \pgfmathsetmacro{\x}{4*\i + 1.5}
  \ifnum\i=3
    \node[blk] (a\i) at (\x,0) {};
  \else
    \node[wht] (a\i) at (\x,0) {};
  \fi
}

\coordinate (brL) at ($(a0)+(-\brsc*0.9,0)$);
\coordinate (brR) at ($(a2)+(\brsc*0.9,0)$);
\Lsqbracket{brL}
\Rsqbracket{brR}

\foreach \j in {0,...,7} {
  \pgfmathsetmacro{\x}{2*\j + 0.5}
  \ifnum\j=0
    \node[blk] (b\j) at (\x,-1) {};
  \else\ifnum\j=4
    \node[blk] (b\j) at (\x,-1) {};
  \else\ifnum\j=5
    \node[blk] (b\j) at (\x,-1) {};
  \else\ifnum\j=6
    \node[blk] (b\j) at (\x,-1) {};
  \else
    \node[wht] (b\j) at (\x,-1) {};
  \fi\fi\fi\fi
}

\coordinate (brL2) at ($(b1)+(-\brsc*0.9,0)$);
\coordinate (brR2) at ($(b3)+(\brsc*0.9,0)$);
\Lsqbracket{brL2}
\Rsqbracket{brR2}

\foreach \k in {0,...,15} {
  \ifnum\k<4
    \node[blk] (c\k) at (\k,-2) {};
  \else\ifnum\k>7
    \node[blk] (c\k) at (\k,-2) {};
  \else
    \node[wht] (c\k) at (\k,-2) {};
  \fi\fi
}

\coordinate (brL3) at ($(c4)+(-\brsc*0.8,0)$);
\coordinate (brR3) at ($(c6)+(\brsc*0.8,0)$);
\Lsqbracket{brL3}
\Rsqbracket{brR3}

\foreach \i in {0,...,3} {
  \pgfmathtruncatemacro{\jone}{2*\i}
  \pgfmathtruncatemacro{\jtwo}{2*\i+1}
  \draw[edge] (a\i) -- (b\jone);
  \draw[edge] (a\i) -- (b\jtwo);
}

\foreach \j in {0,...,7} {
  \pgfmathtruncatemacro{\kone}{2*\j}
  \pgfmathtruncatemacro{\ktwo}{2*\j+1}
  \draw[edge] (b\j) -- (c\kone);
  \draw[edge] (b\j) -- (c\ktwo);
}

\end{tikzpicture}
\caption{Uncolored thick path in the binary tree}
\label{figure3}
\end{figure}

    \subsection*{The covering case}
    
    We now present an easy proof for the upper bound in the above framework for i.i.d.\ points, which serves as a blueprint for proving the same for lacunary dilates later. In view of the previous discussions, we need to prove that almost surely, the colored tree does not contain any infinite uncolored path. In the i.i.d. regime, we require only an application of the Borel--Cantelli lemma with suitably chosen events. We proceed as follows.

    Let $(n_i)_{i\in\N}$ be a strictly increasing sequence of positive integers, $n_1<n_2<n_3<\ldots$, representing the starting levels of the finite uncolored paths, and let $(R_i)_{i\in\N}$ be a strictly increasing sequence of positive integers,
    $R_1<R_2<R_3<\ldots$, representing their respective lengths (heights). To avoid vertical overlap, we also require that $n_{i+1}>n_i+R_i$. Note that the sequence $(n_i)_{i\in\N}$ can be chosen arbitrarily sparse.

    Now define a sequence of ``bad events'' by
    \begin{equation}\label{event}
    \begin{split} A_i &:= A_i\bigl(n_i, R_i, \mathcal{T}_2\bigl((X_N)_{N\in\N},\NN\bigr)\bigr) \\
    &= \big\{ \text{there exists an uncolored path of length $R_i+1$ starting at level $n_i$} \big\}.
\end{split}  \end{equation} See also Figure~\ref{figure4}. In the definition of $\NN$ in \eqref{NN_def} we take $L = 1013$. Clearly, the existence of an infinite uncolored path implies that $A_i$ occurs for infinitely many $i\in\N$ (in fact, for all $i\ge i_0$, although this is not needed).

    \begin{figure}[ht!]
    \centering
    \begin{tikzpicture}[scale=0.6]
    
    \coordinate (A) at (-3,0);   
    \coordinate (B) at ( 3,0);   
    \coordinate (C) at ( 2,2);  
    \coordinate (D) at (-2,2);   
    \draw[thick] (A)--(B)--(C)--(D)--cycle;
    
    \coordinate (E) at (-4,-2);
    \coordinate (F) at (4,-2);
    \draw[thick] (A)--(B)--(F)--(E)--cycle;
    
    \coordinate (G) at (-5,-4);
    \coordinate (H) at (5,-4);
    \draw[thick] (E)--(F)--(H)--(G)--cycle;

    \coordinate (S) at ($(D)!0.3!(C)$);
    \coordinate (N1) at ($(S)+(0.5,-0.5)$);
    \coordinate (N2) at ($(N1)+(-0.5,-0.5)$);
    \coordinate (N3) at ($(N2)+(0.5,-0.5)$);
    \path let \p1 = (N3) in coordinate (I) at (\x1-\y1,0);
    \draw[thick] (S)--(N1)--(N2)--(N3)--(I);
    \foreach \p in {S,N1,N2,N3,I}
    \filldraw[thick,fill=white] (\p) circle (5pt);

    \coordinate (T) at ($(E)!0.6!(F)$);
    \coordinate (M1) at ($(T)+(0.5,-0.5)$);
    \coordinate (M2) at ($(M1)+(-0.5,-0.5)$);
    \coordinate (M3) at ($(M2)+(0.5,-0.5)$);
    \coordinate (M4) at ($(M3)+(0.5,-0.5)$);
    \draw[thick] (T)--(M1)--(M2)--(M3)--(M4);
    \foreach \p in {T,M1,M2,M3,M4}
    \filldraw[thick,fill=white] (\p) circle (5pt);

    \draw[<->,thick] (6.5,0.1) -- (6.5,1.9)
    node[midway,right] {$R_i+1$};
    \draw[<->,thick] (6.5,-3.9) -- (6.5,-2.1)
    node[midway,right] {$R_{i+1}+1$};
    
    \node[right] at (2.2,2) {$n_i$};     
    \node[right] at (4.2,-2) {$n_{i+1}$}; 
    
    \end{tikzpicture}
    \caption{Tree with uncolored finite paths}
    \label{figure4}
    \end{figure}
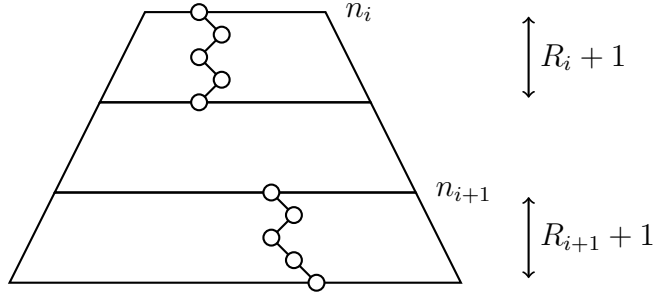

    We now estimate the probability of $A_i$. Note that, at level $n_i$, there are $2^{n_i}$ choices for the starting vertex of a path. From a given vertex there are $2^{R_i+1}$ choices for a path of length $R_i+1$. By the union bound and using the independence assumption, we have
    \begin{equation} \label{prob_product}
        \bP(A_i) \le 2^{n_i+R_i+1} \prod_{j=0}^{R_i} \Big( 1 - \frac{1}{2^{n_i+j}} \Big)^{\lfloor L \cdot 2^{n_i+j}\rfloor} \le 2^{n_i+R_i+1} \frac{1}{e^{1012\cdot (R_i + 1)}}.
\end{equation} We may choose $R_i=n_i$ for all $i\in\N$. By the first Borel--Cantelli lemma it follows that $\bP(\limsup_{i\to \infty}A_i)=0,$ and the existence of an infinite uncolored path also happens with probability zero. 
\\

    We now briefly describe how the previous arguments need to be modified in the case $X_N=\{q_N x\}$, as in the proof of Theorem~\ref{thm1}. Note that the events $\{q_M x\} \notin I_{n_1,k_1}$ and $\{q_N x\} \notin I_{n_2,k_2}$ are not independent, and one cannot estimate the probability by a product as in~\eqref{prob_product}.

To make these events almost independent for $M$ and $N$ coming from different levels, we split the sequence $\{q_N x\}$ into consecutive blocks $\Delta_n, \Delta_n', \Delta_{n+1}, \Delta_{n+1}', \ldots$, where the $\Delta_n$ are the main blocks containing $\floor{L2^n}$ points each, and the $\Delta_n'$ are buffer blocks containing $o(L2^n)$ points. Within a fixed level $n$, the distribution of the points $\{q_N x\}$ can be analyzed by standard Fourier-analytic methods together with the second moment method. The buffer blocks $\Delta_n'$ can be deleted from the sequence because they do not essentially affect the number of uncolored intervals, and the covering property of the original sequence clearly follows from the covering of a sparser subsequence.

In the resulting sequence $(a_N)_{N\in \bN}$, the points $\{a_M x\}$ and $\{a_N x\}$ from two different blocks $\Delta_n$ and $\Delta_{n+j}$ are sufficiently close to being independent because the ratio $\mfrac{a_N}{a_M}$ is large. We then bound the tail probability by taking a product of $R+1$ second moments and applying the Chebyshev inequality.

We should mention that a similar trick of deleting points was crucial in~\cite{eduard} for the bound on the maximal gap. In that case, however, almost all points were deleted in order to create sufficient independence, resulting in the loss of a power of $\log N$ in the final bound --- this would correspond in the above to taking each mass block $\Delta_n$ to contain only a single element. In contrast, in our approach we may delete only a small proportion of points, since the variables in the underlying counting problem have different scales and the counting problem can be solved inductively.

    \subsection*{The non-covering case} 

    For the non-covering case, the argument proceeds by induction using a variant of a branching random walk. We fix an initial uncolored interval $I_{n_0,k_0}$ such that $I_{n_0,k_0-1}$ and $I_{n_0,k_0+1}$ are also uncolored. We then count the number of ``potential'' infinite thick uncolored paths emanating from $I_{n_0,k_0}$. This yields a random walk in the following sense: whenever a descendant of $I_{n_0,k_0}$ is not colored (together with its left and right neighbours), the number of potential paths increases by $1$, since the path may continue through either of its children. Conversely, whenever a descendant of $I_{n_0,k_0}$ (or one of its left or right neighbours) gets colored, the number of potential paths decreases by $1$.

We aim to show that almost surely the colored tree has an infinite thick uncolored path. We fix some $n_0 \gs 1$ and consider the paths starting at level $n_0.$ For $n \gs n_0,$ let $\cI_n$ be the set of vertices at level $n$ such that all of their ancestors at levels $n-1, \ldots, n_0$   are uncolored and have uncolored left and right neighbors.  

Let 
    \[
        I(\cI_n) := \bigcup_{k: I_{n,k} \in \cI_n} I_{n,k}
    \] be the union of these vertices. Note that we clearly have $I(\cI_n) \subseteq I(\cI_{n-1}) \subseteq \ldots \subseteq I(\cI_{n_0}) = [0, 1)$. Let  $N_n := \floor{L 2^0} + \ldots + \floor{L 2^n}$ be the number of points of $(X_N)_{N\in\bN}$ that have been placed up to the $n$-th level of the tree, and define the event
    \[
        B_n := \big\{ \# \cI_n \ge (1.2)^n \big\}.
    \] Clearly, an infinite thick uncolored path starting at level $n_0$ exists if the event
    \[
        \bigcap_{n \ge n_0} B_n \quad \text{takes place.}
    \] It will be sufficient to show that
    \[
        \bP\Big( \bigcap_{n \ge n_0} B_n \Big) \ge \prod_{n \ge n_0} \Big( 1 - \frac{1}{L \cdot (1.1)^n} \Big);
    \] since the RHS of the last inequality can be made arbitrarily close to $1$ by choosing sufficiently large $n_0$, this will yield the existence of an infinite thick uncolored path almost surely. 
    
    First, note that
    \[
        \bP\big( B_n \cap B_{n-1} \cap \ldots \cap B_{n_0} \big) = \bP \big( B_n \big| B_{n-1} \cap \ldots \cap B_{n_0} \big) \cdot \bP(B_{n-1} | B_{n-2} \cap \ldots \cap B_{n_0}) \cdot \ldots \cdot \bP(B_{n_0}).
    \] We have $\bP(B_{n_0})=1$ trivially since $2^{n_0} > (1.2)^{n_0}$. Next, assume that for all $n_0 \le r \le n-1$ the inequality
    \[
        \bP \Big( B_r \Big| \bigcap_{n_0 \le i \le r-1} B_i \Big) \ge 1 - \frac{1}{L \cdot (1.1)^r}
    \] has been proven. It thus remains to show that
    \[
        \bP \Big( B_n \Big| \bigcap_{n_0 \le r \le n-1} B_r \Big) \ge 1 - \frac{1}{L \cdot (1.1)^n}.
    \]

    To show this, we introduce the counting function for the number of points falling into $I(\cI_n)$:
    \[
        C_n := C_n(\cI_n) = \#\big\{ N_{n-1} < N \le N_n : X_N \in I(\cI_n) \big\}.
    \]

    Since every new point $X_N$ with $N_{n-1} < N \le N_n$ can remove at most most three consecutive intervals from consideration, we have for the number of survived intervals on the next level the inequality
    \begin{equation} \label{rw_relation}
        \# \cI_{n+1} \ge 2 \big( \# \cI_n - 3 C_n \big).
    \end{equation} Further, let $\cP_1, \ldots, \cP_T$ be all possible collections of thick (finite) paths from $n_0$ to $n$ such that for each collection  
    \[
        \cP_i = \big( \cI_{n_0}^{(i)}, \ldots, \cI_n^{(i)} \big)
    \] one has $\# \cI_r^{(i)} \ge (1.2)^r$ for all $n_0 \le r \le n$. Then clearly we can partition $B_{n_0}\cap \ldots \cap B_{n-1}$ into a disjoint union 
    \[
        \bigcap_{n_0 \le r \le n-1} B_r = V_1  \cup \ldots  \cup V_T,
    \] where $V_i$ is the event that all thick paths from $\cP_i$ (and only them) remained uncolored. Thus,
    \[
         \bP \big( B_n  |  B_{n_0} \cap \ldots \cap B_{n-1} \big) =\bP \big( B_n  | V_1\cup \ldots \cup V_T \big) \gs  \min_{1\le i \le T} \bP \big( B_n \big| V_i \big),
    \] and therefore, it is enough to show that
    \[
        \bP \big( B_n \big| V \big) \ge 1 - \frac{1}{L \cdot (1.1)^n}
    \] for a fixed event $V$ corresponding to some collection of paths $\cP$ satisfying the conditions above.
    
    Note that conditioned on $V$, the random variable 
    $\cI_n$ is deterministic. We compute the (conditional on $V$) expectation of $C_n$. By the independence between the levels we have $\bE[C_n | V] = \bE[C_n]$ and so
    \begin{equation} \label{cond_exp_rand}
        \bE[C_n | V] = (N_n - N_{n-1}) \frac{\# \cI_n}{2^n} \ge L \cdot (1.1)^n.
    \end{equation}
    Similarly, for the variance we get
    \begin{multline*}
        \text{Var}[C_n | V] = \bE[C_n^2 |V] - \bE[C_n | V]^2 =
        \sum_{N_{n-1} < M,N \le N_n} \bP \Big( X_M \in I(\cI_n) \land X_N \in I(\cI_n) \big| V \Big) - \\
        \bigg( \sum_{N_{n-1} < N \le N_n} \bP \big( X_N \in I(\cI_n)  | V \big) \bigg)^2 \le \bE[C_n | V].
    \end{multline*} Thus, by Chebyshev's inequality,
    \begin{equation} \label{Chebyshev_bound}
        \bP \Big( \big| C_n - \bE[C_n | V] \big| > \bE[C_n | V]  \Big| V  \Big) \le \frac{1}{\bE[C_n | V]}. 
    \end{equation} 
    Assume that $L \le \mfrac{1}{100}$ and that $C_n \le 2\bE[C_n | V]$. Then, from~\eqref{rw_relation} and~\eqref{cond_exp_rand}, we get
    \[
        \# \cI_{n+1} \ge 2 \big(  \# \cI_n - 6 \bE[C_n | V] \big) \ge 2 \Big( \# \cI_n - \frac{6\floor{L 2^n}}{2^n} \# \cI_n \Big) \ge (1.2)^{n+1}.
    \] Thus,
    \begin{align*}
        \bP \big( B_n \big| V \big) & \ge \bP \big( C_n > 2\bE[C_n | V] \ \big| \ V \big)   \\
        & \gs 1 - \bP \Big( \big| C_n - \bE[C_n | V] \big| > \bE[C_n | V] \ \Big| \ V \Big) \ge 1 - \frac{1}{\bE[C_n | V]} \ge 1 - \frac{1}{L (1.1)^n}
    \end{align*} as desired. Choosing $L = \mfrac{1}{2026} + \eps$ concludes the proof. \\

    \medskip

    We finish this section by  discussing the main difficulty arising when one tries to adapt the above argument to the sequence $X_N=\{q_N x\}$ for the proof of Theorem \ref{thm2}. In that case,  the  points $X_N$ both on different levels as well as within one level are not independent. To reduce the dependence between points placed in different levels, we create a sufficiently large separation between the points $q_M$ and $q_N$ coming from different levels. The sequence is again split into main and buffer blocks $\Delta_n, \Delta_n', \Delta_{n+1}, \Delta_{n+1}', \ldots$, and, in order for the gap condition in Theorem~\ref{thm2} to be satisfied, we choose $\Delta_n'$ as large as possible.

    However, in contrast to the covering case, one cannot simply delete the buffer blocks $\Delta_n'$. Instead, we allocate the points from $\Delta_n'$ in the worst possible way within the surviving set $\cI_n$. This forces us to keep $\cI_n$ relatively large for the above argument to work, i.e. $\# \cI_n \ge 2^{(1-\eps)n}$. This constraint can be enforced by taking the parameter $L = L(\eps)$ sufficiently small.

    %SECTION 3: Upper bound
    
    %%%%%%%%%%%%%%%%%%%%%%%%%%%%%%%%%%%%%%%%%%%%%%%%%%%%%%%

    \section{The covering case} \label{sec_upper}
    
    In this section, we prove Theorem~\ref{thm1} using the framework developed in Section~\ref{sec_binary}, combined with estimates for higher moments of suitable exponential sums arising from $C_c^{\infty} (\bR)$ functions supported on the corresponding intervals (vertices of the tree). We conclude the section with a short proof of Corollary~\ref{cor1}.

    Throughout the proofs, we will make use of a standard smoothing argument. The indicator functions of relevant intervals will be replaced by suitable functions in $C_c^\infty(\bR)$ with sufficiently rapid Fourier decay. We will then compute moments using the corresponding Fourier expansions. The following lemma introduces the cutoffs that we will use throughout the next three sections.
    
    \begin{lemma}\label{smoothing}
    There exist real-valued, even, non-negative functions $g^{+}, g^{-} \in C_c^\infty(\bR)$ such that
    \begin{enumerate}
    \item $g^{\pm}(x) \le 1 \ \forall x \in \bR$;
    \item $\supp(g^-)\subset[-1,1]$ and $\widehat{g}^{-}(0)>0$;
    \item $\supp(g^+)\subset\big[-\frac{3}{2},\frac{3}{2}\big]$ and $g^+(x) = 1$ for $|x|\le \frac{10}{9}$; \ $2 < |\widehat{g}^{+}(0)| < \mfrac{5}{2}$;
    \item $\widehat g^\pm$ are real-valued and even on $\bR$;
    \item there exist constants $C,c>0$ such that for all $|\xi|\ge 1$,
    \begin{equation}\label{Fourier_bound}
    |\widehat g^\pm(\xi)| \le
    \min\!\Big(\frac{1}{|\xi|},\, C\exp\!\big(-c |\xi|^{1/2} \big)\Big).
    \end{equation}
    \end{enumerate}
    \end{lemma}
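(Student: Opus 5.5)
We construct both functions from one compactly supported bump whose derivatives grow like $(k!)^2$ (Gevrey class $2$). Such derivative growth is exactly what produces Fourier decay of the form $\exp(-c|\xi|^{1/2})$.

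\emph{The base bump.} Let $\phi(x)=\exp\bigl(-\frac{1}{1-x^2}\bigr)$ for $|x|<1$ and $\phi(x)=0$ otherwise. Then $\phi\in C_c^\infty(\bR)$, $\phi$ is even and non-negative, $0\le\phi\le 1$, and $\supp\phi\subset[-1,1]$. The key analytic input is the Gevrey estimate
\[
 \|\phi^{(k)}\|_{L^1}\le A B^k (k!)^2 \qquad (k\ge 0)
\]
for some constants $A,B>0$. This is classical (see e.g.\ H\"ormander's treatment of Gevrey classes). A direct proof goes via Cauchy estimates: $\phi$ extends holomorphically off $\pm1$. For $x$ at distance $t$ from $\pm1$, one uses a disk of radius $\asymp t^2$ around $x$, on which $|\phi|\ll e^{-c/t}$. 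This gives $|\phi^{(k)}(x)|\ll k!\,t^{-2k}e^{-c/t}$, and maximising over $t$ yields $\ll C^k k!\,(2k)!\ll C'^k (k!)^2$.

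Integrating by parts $k$ times gives $|\widehat\phi(\xi)|\le A B^k (k!)^2/(2\pi|\xi|)^k$. Choosing $k\asymp |\xi|^{1/2}$ and using Stirling's formula then gives $|\widehat\phi(\xi)|\le C\exp(-c|\xi|^{1/2})$. I expect this derivative bound to be the main obstacle; everything else is bookkeeping.

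\emph{Construction of $g^-$.} Set $g^-=\kappa\phi$ with a small constant $\kappa\in(0,1]$. Then $g^-$ is even and non-negative, $g^-\le 1$, $\supp g^-\subset[-1,1]$, and $\widehat g^-(0)=\kappa\int\phi>0$. Since $g^-$ is real and even, $\widehat g^-$ is real and even. One integration by parts gives
\[
|\widehat g^-(\xi)|\le \frac{\kappa\|\phi'\|_{L^1}}{2\pi|\xi|}.
\]
Choosing $\kappa\le 2\pi/\|\phi'\|_{L^1}$ makes this at most $1/|\xi|$. The bound $|\widehat g^-(\xi)|\le C\exp(-c|\xi|^{1/2})$ is inherited from $\widehat\phi$. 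Since $g^-$ satisfies both bounds, it satisfies their minimum, which is \eqref{Fourier_bound}.

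\emph{Construction of $g^+$.} Let $h(x)=\delta^{-1}\phi(x/\delta)/\int\phi$ with $\delta=\frac1{20}$. This is an even probability density supported in $[-\delta,\delta]$. Define
\[
g^+=\Id_{[-a,a]}*h \qquad\text{with } a=\tfrac{6}{5}.
\]
Then $g^+\in C_c^\infty(\bR)$, and $g^+$ is even with values in $[0,1]$. Its support lies in $[-a-\delta,a+\delta]=[-1.25,1.25]\subset[-\frac32,\frac32]$. Moreover $g^+=1$ on $|x|\le a-\delta=1.15$, which contains $|x|\le\frac{10}{9}$. Finally $\widehat g^+(0)=2a=2.4\in(2,\frac52)$.

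For the Fourier transform we have
\[
\widehat g^+(\xi)=\frac{\sin(2\pi a\xi)}{\pi\xi}\,\widehat h(\xi), \qquad \widehat h(\xi)=\frac{\widehat\phi(\delta\xi)}{\int\phi}.
\]
This is real and even. Since $|\widehat h|\le 1$, we get $|\widehat g^+(\xi)|\le\frac{1}{\pi|\xi|}\le\frac1{|\xi|}$. The decay of $\widehat h$ gives $|\widehat g^+(\xi)|\le C\exp(-c\,\delta^{1/2}|\xi|^{1/2})$. After shrinking $c$ and enlarging $C$, both functions satisfy \eqref{Fourier_bound} with common constants.
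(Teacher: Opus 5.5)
The paper does not prove this lemma; it only remarks that such cutoffs are standard and cites the literature. Your explicit construction is therefore a self-contained alternative, and most of it checks out. The functions $g^-=\kappa\phi$ and $g^+=\Id_{[-6/5,6/5]}*h$ have the right supports and values, and $\widehat g^+(0)=12/5$. Both transforms are real and even, the $1/|\xi|$ bounds are correct, and the stretched-exponential decay transfers from $\widehat\phi$ to both functions as you say.

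The gap is in the step you yourself flagged as the main obstacle: your Cauchy-estimate sketch does not give $\|\phi^{(k)}\|_{L^1}\le AB^k(k!)^2$.

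\textbf{Why the sketch fails.} With a disk of radius $\asymp t^2$ you get $|\phi^{(k)}(x)|\ll k!\,t^{-2k}e^{-c/t}$. But $\sup_{t>0}t^{-2k}e^{-c/t}=(2k/(ce))^{2k}$, which is $(2k)!$ up to a factor at most exponential in $k$. So the bound you actually obtain is of order $C^k\,k!\,(2k)!\ge C^k(k!)^3$. The asserted inequality $k!\,(2k)!\ll C'^k(k!)^2$ is false; it is off by a full factor $k!$. Fed into the integration by parts with $k\asymp|\xi|^{1/3}$, this gives only Gevrey order $3$ and decay $\exp(-c|\xi|^{1/3})$, which falls short of \eqref{Fourier_bound}.

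\textbf{The repair.} Use a disk of radius $t/2$ around $x$, where $t=1-|x|$, and write $\frac{1}{1-z^2}=\frac12\bigl(\frac{1}{1-z}+\frac{1}{1+z}\bigr)$. On that disk both summands have positive real part. The summand belonging to the nearer endpoint has real part at least $\frac{t/2}{(3t/2)^2}=\frac{2}{9t}$, so $|\phi(z)|\le e^{-1/(9t)}$ there. Cauchy's estimate then gives $|\phi^{(k)}(x)|\le k!\,(2/t)^k e^{-1/(9t)}\le k!\,(18k/e)^k\le 18^k(k!)^2$, uniformly in $x\in(-1,1)$. Alternatively, simply cite the classical fact that $e^{-1/x}$ is of Gevrey order $2$. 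With this Gevrey-$2$ bound, the rest of your argument goes through unchanged.
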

    
    The construction of such functions is standard; see, for instance, \cite{TLAS22} or~\cite[Chapter~1]{Karatsuba}. As an immediate consequence of~\eqref{Fourier_bound}, we obtain the tail estimate
    \begin{equation} \label{tail_decay}
    \sum_{k \ge N(\log N)^{100}} \widehat{g}^{\pm}\!\left(\frac{k}{N}\right) \ll N^{-100}
    \qquad (N\to\infty).
    \end{equation}

    \subsection*{Borel-Cantelli setup}
    
    We use the same approximation of a ``bad event'' as in the covering case argument in Section~\ref{sec_binary}. Suppose that, for some sufficiently large $L>1$, there exists a point $\delta_0\in[0,1]$ such that
    \[
    \|q_N x-\delta_0\|>\frac{L}{N}\qquad \text{for all } N>N_0.
    \]
    Then, setting $N_n = \lfloor \frac{L}{2}2^0 \rfloor + \ldots + \lfloor \frac{L}{2}2^n\rfloor,$ the tree $\mathcal{T}_2\big((\{q_N x\})_{N\in\bN}, \NN\big)$ contains an infinite uncolored path (the one corresponding to~$\delta_0$). We next estimate the $\mu$-measure of the set of $x\in[0,1]$ for which this can occur, where $\mu$ is any fixed measure as in the statement of Theorem~\ref{thm1}.

    As in the previous section, let $(n_i)_{i\in\N}$ be a sequence of positive integers satisfying $100\le n_1<n_2<n_3<\cdots$, representing the starting levels of the finite uncolored path segments, and let $(R_i)_{i\in\N}$ be a sequence of positive integers with $100\le R_1<R_2<R_3<\cdots$, representing their respective lengths. We also require that $n_{i+1}>n_i+R_i$ and that $100n_i\le R_i\le 200n_i$. Define
    \begin{multline}\label{event}
    A_i := A\big(n_i,R_i,\mathcal{T}_2\big((\{q_N x\})_{N\in\bN},\NN\big)\big) \\
    = \big\{ x\in[0,1] : \text{there exists an uncolored path of length $R_i+1$ starting at level $n_i$} \big\}.
    \end{multline}

    Then our goal is to show that
    \[
    \mu\Bigl(\limsup_{i\to\infty} A_i\Bigr)=0.
    \]
    We first show that Theorem~\ref{thm1} can be reduced to the following proposition:
    
    \begin{prop}\label{main_prop_2}
    Let $n,R\ge 100$ be integers such that $100n\le R\le 200n$, let $(q_N)_{N\in\N}$ be a real-valued lacunary sequence satisfying $\frac{q_{N+1}}{q_N}> 10$ for all $N\in\N$, and let $\mu$ be any measure on $[0,1]$ satisfying~\eqref{rajchman_cond}. \par Further, set $N_n= 1000\cdot 2^0 + \dots + 1000\cdot 2^n$ and assume that the tree $\mathcal{T}_2\bigl((\{q_N x\})_{N\in\bN}, \NN\bigr)$ satisfies the following property: for all positive integers $M<N$ corresponding to different levels of $\mathcal{T}_2$ (i.e., for some $n$ one has $M \le N_n < N)$,
    we have
    \[
    \frac{q_N}{q_M} \ge N^{100}.
    \]
    
    Let $A(n,R,\mathcal{T}_2( \{q_N x\},\NN))$ be as in~\eqref{event}. Then, for all sufficiently large $n$ and $R$ satisfying the above restrictions, we have
    \[
    \mu\big(A(n,R)\big) \le \frac{1}{1.01^R}.
    \]
    \end{prop}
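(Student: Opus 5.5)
We plan to mimic the i.i.d.\ argument of Section~\ref{sec_binary}: a union bound over the $2^{n+R+1}$ candidate paths, and for each path an estimate showing the probability that all its vertices stay empty decays like $e^{-c(R+1)}$ with $c>\ln 2$ large enough to absorb $2^{n+R+1}\le 2^{1.01R+1}$ (since $n\le R/100$). Fix a path $P=(I_{n,k_0},I_{n+1,k_1},\dots,I_{n+R,k_R})$, one vertex per level $j\in\{n,\dots,n+R\}$. For level $j$ let $\Delta_j=\{N_{j-1}<N\le N_j\}$, so $\#\Delta_j=1000\cdot 2^j$, and define the smooth counting variable
\[
S_j(x):=\sum_{N\in\Delta_j} g^{-}\big(2^{j+1}(\{q_Nx\}-c_j)\big),
\]
with $c_j$ the center of $I_{j,k_j}$ and $g^-$ from Lemma~\ref{smoothing}, periodized. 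Then $S_j(x)>0$ forces $I_{j,k_j}$ to be colored, so the event that $P$ is uncolored is contained in $\bigcap_j\{S_j=0\}\subseteq\bigcap_j\{|S_j-\bE S_j|\ge \bE S_j\}$, where $\bE S_j\approx 1000\cdot 2^j\cdot 2^{-j-1}\widehat g^-(0)=500\,\widehat g^-(0)$, a large absolute constant (after rescaling $g^-$ if needed).

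The key step is a product bound: we aim to show
\[
\mu\Big(\bigcap_{j=n}^{n+R}\{|S_j-\bE S_j|\ge \bE S_j\}\Big)\le \int \prod_{j}\frac{(S_j-\bE S_j)^2}{(\bE S_j)^2}\,d\mu \le \Big(\frac{C_0}{\bE S_j}\Big)^{R+1}(1+o(1)),
\]
the first inequality being the pointwise product Chebyshev trick. To evaluate the integral, we expand each $S_j-\bE S_j$ in Fourier series: $S_j-\bE S_j=\sum_{N\in\Delta_j}\sum_{h\ne0}2^{-j-1}\widehat g^-(h/2^{j+1})e(-hc_j)e(hq_Nx)$, with $|h|\le 2^{j}(\log 2^j)^{100}$ by \eqref{tail_decay}, the tail being negligible. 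Multiplying out the $R+1$ squared factors, we integrate against $\mu$ and obtain $\widehat\mu$ evaluated at $\sum_j(h_jq_{N_j}-h'_jq_{N'_j})$. The separation hypothesis $q_N/q_M\ge N^{100}$ across levels, together with $|h|\ll N(\log N)^{100}$, shows that unless every level's contribution $h_jq_{N_j}-h_j'q_{N_j'}$ is tiny, the full frequency is dominated by the top nonvanishing level, of size $\ge q_{N}^{1/2}$ say, so \eqref{rajchman_cond} makes those terms negligible even after summing over the (at most $\exp(O(R\cdot n))$, polynomial in $q$) number of tuples. The surviving "diagonal-per-level" terms factorize, giving $\prod_j \mathrm{Var}$-type quantities. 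Within one level, lacunarity $q_{N+1}/q_N>10$ means $hq_N-h'q_{N'}$ is small only when $N=N'$ and $h=h'$ (or rather the near-diagonal is controlled), producing a per-level bound $\ll \#\Delta_j\,2^{-2j-2}\sum_h|\widehat g^-(h/2^{j+1})|^2\ll \bE S_j$, hence the factor $C_0/\bE S_j$ per level.

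With $\bE S_j\ge 500\widehat g^-(0)$ we get a per-level factor below, e.g., $e^{-1}$, so $\mu(A(n,R))\le 2^{n+R+1}e^{-(R+1)}\le 1.01^{-R}$ for large $R$. The main obstacle is the second step: rigorously handling the non-diagonal terms in the $2(R+1)$-fold product, since the number of frequency tuples grows like $\prod_j 2^{2j}(\log)^{200}$, which is $2^{O(R^2)}$; I would first try to bound the number of tuples against $|\widehat\mu|\le q^{-\eta/2}$ with $q\ge N^{100\cdot(\text{level gap})}$, showing that each off-diagonal level tuple gains $N^{-50\eta}$ relative to its count, and handle within-level near-coincidences $hq_N\approx h'q_{N'}$ via the ratio $>10$ and boundedness of $|h|/|h'|$ relative scales.
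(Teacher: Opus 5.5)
Your skeleton is the paper's: the union bound over the $2^{n+R+1}$ paths, smoothing by $g^-$, the product Chebyshev inequality, Fourier expansion, and discarding all tuples with large total frequency (valid since $q_{N_n}^{-\eta}\le e^{-\eta 2^n}$ beats the $e^{O(n^2)}$ tuples). The gap is in the remaining tuples. Write $D_j:=h_jq_{\ell_j}-k_jq_{m_j}$, with $\ell_j,m_j\in\Delta_j$, for the level-$j$ part of the total frequency $J=\sum_j D_j$.

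You claim that unless every $D_j$ is tiny, $J$ is dominated by the top non-tiny level. This is false, so the small-$J$ terms do not factorize into per-level diagonal sums. An off-diagonal $D_{j+1}$ (with $\ell_{j+1}\neq m_{j+1}$) can be nonzero, negligible at its own scale $q_{N_j+1}$, yet as large as the level-$j$ terms, and cancel them. For instance, suppose $q_m=11q_\ell+q_{N_j}$ for some $\ell<m$ in $\Delta_{j+1}$, which is compatible with all hypotheses. Then $D_{j+1}=11q_\ell-q_m=-q_{N_j}$ and $D_j=2q_{N_j}-q_{N_j}$ (taking $\ell_j=m_j=N_j$) give $J=0$.

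No choice of thresholds $T_j$ rescues the dichotomy. Dominating the lower levels forces $T_j\gg 2^{2j}q_{N_{j-1}}$, which increases in $j$. Excluding cancellation by `tiny' higher levels forces $T_i\ll T_j$ for $i>j$. Your fallback of a gain $N^{-50\eta}$ per off-diagonal level also fails. $\widehat\mu$ is evaluated only at the total frequency, so each tuple gets the single factor $(1+|J|)^{-\eta}$, which is $\asymp 1$ for cancelling tuples. These tuples must be counted, not estimated by decay.

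What is missing is a per-level count that is \emph{uniform in a shift}. If $|J|<q_{N_n}$, then for every $j>n$ one has $|k_jq_{m_j}-h_jq_{\ell_j}-B_{j+1}|\ll 2^{2j}q_{N_{j-1}}$, where $B_{j+1}=\sum_{i>j}D_i$ is fixed by the higher levels. The paper's Lemma~\ref{local_count} bounds the $c(h)c(k)$-weighted number of level-$j$ solutions of such an inequality by $20N_j2^j$, uniformly in $B\in\bR$. Its proof only needs the window divided by $q_\ell$ to be below $\frac14$, which the hypothesis $q_N/q_M\ge N^{100}$ guarantees.

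The argument runs as follows. At most $O(j)$ offsets $r=m-\ell$ are admissible. For fixed $r$, consecutive solutions satisfy $h_i\ge 10^r i-1$, giving weight $1+2^{j+1}(\ln 9^r+3)9^{-r}$. This also handles the genuine within-level coincidences $hq_\ell\approx kq_m$ with $m\neq\ell$, on which your sketch only hedged; your within-level discussion is just the case $B=0$, which does not suffice.

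You then fix the levels from the top down and apply this bound at each step with $B=B_{j+1}$ frozen. Level $n$ is estimated trivially, at a cost of $2^{O(n)}$ absorbed by $n\le R/100$. This yields $\prod_{j>n}20N_j2^j$ against $\prod_j N_j^2$, i.e.\ a factor $O(2^j/N_j)=O(1/1000)$ per level, which is what beats the $2^{R}$ from the union bound.
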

    
    First, we need to modify a general lacunary sequence $(q_n)_{n\in\N}$, as in Theorem~\ref{thm1}, so that it satisfies the hypotheses of Proposition~\ref{main_prop_2}. We do this by discarding a fixed proportion of terms from the sequence $(q_n)_{n\in\N}$. The construction proceeds in two steps.
    
    In the first step, to ensure the condition $\frac{q_{N+1}}{q_N}> 10$, we replace the original lacunary sequence $(q_n)_{n\in\N}$ by a new sequence $(a_N)_{N\in\N}$ defined by
    \[
    a_N := q_{N \big(\floor{\frac{\log 10}{\log r}} + 1\big)}.
    \] This partitions the sequence $(q_n)_{n\in\N}$ into consecutive blocks of length $\lfloor \frac{\log 10}{\log r}\rfloor + 1$ and removes all but one element from each block. One can then verify directly that the resulting sequence $(a_N)_{N\in\N}$ satisfies $\frac{a_{N+1}}{a_N}> 10$ for all $N\in\N$. Note that if $r>10$, no elements need to be removed.

    Second, to ensure the condition $\frac{q_N}{q_M}\ge N^{100}$ whenever
    $M \le N_n < N$ for some $n$, we construct a third sequence $(b_N)_{N\in\N}$ by removing blocks of consecutive elements from $(a_N)_{N\in\N}$ as follows.
    Suppose that the first $N_n$ elements of the sequence $(b_N)_{N\in\N}$ have already been constructed from the first $N_n+Y_n$ elements of $(a_N)_{N\in\N}$.  We take the next unused $N_{n+1}-N_n$ elements of $(a_N)_{N\in\N}$ and append them to $(b_N)_{N\in\N}$. We then skip the next $\lfloor 100\,\frac{\log N_{n+1}}{\log 10}\rfloor$ elements of $(a_N)_{N\in\N}$. Repeating this procedure inductively, one checks directly that $(b_N)_{N\in\N}$ satisfies both conditions in Proposition~\ref{main_prop_2}. Note that, in this second step, once $n$ is sufficiently large we remove only a very small proportion of terms.

    Now consider the tree $\cT_2 ( (\{b_N x\})_{N\in\bN},\NN)$ and two sequences of positive integers
    $100 \le n_1 < n_2 < n_3 < \ldots$ and $100 \le R_1 < R_2 < R_3 < \ldots$
    satisfying $n_{i+1} > n_i + R_i+1$ and $100 n_i \le R_i \le 200 n_i$. By Proposition~\ref{main_prop_2},
    \[
    \mu(A_i) \le \frac{1}{1.01^{R_i}},
    \]
    where $A_i$ is defined by~\eqref{event} with $\NN$ as in the hypothesis. Then, by the Borel--Cantelli lemma,
    \[
    \mu \big( \limsup_{i \to \infty} A_i \big) = 0.
    \]
    Thus, for $\mu$-almost all $x \in [0,1]$, there is no infinite uncolored path in  $\cT_2 ( (\{b_N x\})_{N\in\bN},\NN)$.

    Consequently, for $\mu$-almost all $x \in [0,1]$, one has
    \[
    \norm{b_N x - \delta} < \frac{2000}{N} \qquad \text{for infinitely many } N\gs 1,
    \]
    uniformly in $\delta \in [0,1]$. Reverting to the original indices, one can verify directly that
    \[
    \norm{q_N x - \delta} < \frac{4000 (\floor{\frac{\ln 10}{\ln r}} + 1)}{N}
    \qquad \text{for infinitely many } N \gs 1.
    \]
    
    \vspace{1ex}

    \subsection*{Proof of Proposition~\ref{main_prop_2}: Fourier analysis}
    
    Let $N_n$ be as in the hypothesis. Note that the event $A=A(n, R, \cT_2(  \{ q_N x\}, \NN))$ can be written as
    \begin{multline*}
        \Big\{ x \in [0, 1): \text{there exist nested intervals $I_n \subset \ldots \subset I_{n+R}$ of the form} \\
        I_j := I_{j, k_j} = \Big[ \frac{k_j}{2^j}, \frac{k_j+1}{2^j} \Big)
        \text{ such that} \sum_{N_{j-1} < N \le N_j}\hspace{-4mm} \Id \big( \{ q_N x \} \in I_j \big) = 0 \text{ for all } n \le j \le n+R \Big\}.
    \end{multline*}
    
    By the union bound,  
    \begin{equation} \label{product}
        \mu(A) \le \max_{I_n \subset \ldots \subset I_{n+R}} 2^{n+R+1} \mu\Big( \bigcap_{j=n}^{n+R} A_j (I_j) \Big),
    \end{equation} where
    \[
        A_j (I_j) := \Big\{ x \in [0, 1): \sum_{N_{j-1} < N \le N_j} \Id \big( \{ q_N x \} \in I_j \big) = 0 \Big\}.
    \]

    Next, we replace the indicator function of each $I_j$ by $g^{-}(x)$ from Lemma~\ref{smoothing}. We obtain
    \[
        \mu(A_j (I_j)) \le \mu\bigg( \Big\{ x \in [0, 1): \mathop{  \mathop{\sum\sum}_{ N_{j-1} < N \le N_j  }   }_{h_j \in \bZ} g^{-} \Big( \frac{q_N x - t_j + h_j}{2^{-j-1}}  \Big) = 0 \Big\}  \bigg),
    \] where $t_j$ is the center of $I_j$. By Poisson summation,
    \[
        \mathop{  \mathop{\sum\sum}_{ N_{j-1} < N \le N_j  }   }_{h_j \in \bZ} g^{-} \Big( \frac{q_N x - t_j + h_j}{2^{-j-1}}  \Big) = \frac{1}{2^{j+1}} \mathop{  \mathop{\sum\sum}_{ N_{j-1} < N \le N_j  }   }_{h_j \in \bZ} \widehat{g}^{-} \Big( \frac{h_j}{2^{j+1}} \Big) e \big( h_j q_N x - h_j t_j \big). 
    \] Next, using~\eqref{tail_decay}, we can write
    \begin{multline*}
        \frac{1}{2^{j+1}} \mathop{  \mathop{\sum\sum}_{ N_{j-1} < N \le N_j } }_{h_j \in \bZ} \widehat{g}^{-} \Big( \frac{h_j}{2^{j+1}} \Big) e \big( h_j q_N x - h_j t_j \big) =  
        \frac{N_j - N_{j-1}}{2^{j+1}} + \\ 
        + \frac{1}{2^{j+1}} \mathop{ \mathop{ \sum\sum}_{N_{j-1} < N \le N_j}}_{1 \le |h_j| \le 2^{2j} } \widehat{g}^{-} \Big( \frac{h_j}{2^{j+1}} \Big) e \big( h_j q_N x - h_j t_j \big) + O\big( N_j^{-100} \big). 
    \end{multline*} Since $\widehat{g}^{-}$ is even, the contributions of positive and negative $h_j$ in the last double sum have the same absolute values. Note that $N_j - N_{j-1} \ge \frac{N_j}{2}$ by definition of $N_j$. Then for all sufficiently large $j$ we can write
    \[
        \mu\big(A_j (I_j)\big) \le \mu \bigg( \Big\{ x \in [0, 1): \bigg|    \mathop{\mathop{\sum\sum}_{N_{j-1} < N \le N_j}}_{ 1 \le h_j \le 2^{2j} } \widehat{g}^{-} \Big( \frac{h_j}{2^{j+1}} \Big) e \big( h_j q_N x - h_j t_j \big) \bigg| > \frac{N_j}{4} \Big\} \bigg).
    \] Next, using the last inequality, we can write
    \[
        \mu \bigg( \bigcap_{j=n}^{n+R} A_j (I_j) \bigg) \le 
        \mu \bigg( \Big\{ x\in[0, 1): \prod_{j=n}^{n+R} \bigg| \mathop{\mathop{\sum\sum}_{N_{j-1} < N \le N_j}}_{ 1 \le h_j \le 2^{2j} } \hspace{-3mm}\widehat{g}^{-} \Big( \frac{h_j}{2^{j+1}} \Big) e \big( h_j q_N x - h_j t_j \big) \bigg| > \prod_{j=n}^{n+R} \frac{N_j}{4} \Big\} \bigg).
    \] Then, applying Chebyshev inequality, we find
    \begin{equation*}
        \mu \Big( \bigcap_{j=n}^{n+R} A_j (I_j) \Big) \le  \Big( \prod_{j=n}^{n+R} \frac{4}{N_j} \Big)^2 \int_0^1 \prod_{j=n}^{n+R} \bigg| \mathop{\mathop{\sum\sum}_{N_{j-1} < N \le N_j}}_{ 1 \le h_j \le 2^{2j} }\hspace{-2mm} \widehat{g}^{-} \Big( \frac{h_j}{2^{j+1}} \Big) e \big( h_j q_N x - h_j t_j \big) \bigg|^2 d\mu(x).
    \end{equation*} Opening the square and using the property~\eqref{rajchman_cond} of the measure $\mu$, we get
    \begin{equation} \label{orthogonality}
        \mu \Big( \bigcap_{j=n}^{n+R} A_j (I_j) \Big) \le 4^{2(R+1)} \Big( \prod_{j=n}^{n+R} \frac{1}{N_j^2} \Big) S(n, R),
    \end{equation} where

    \[
    S(n, R) :=  \sum_{\bm{\ell}, \bm{m}, \bm{h},\bm{k}} c\left(\bm{h}\right)c\left(\bm{k}\right)
    \left(1 +  J(\bm{\ell}, \bm{m}, \bm{h}, \bm{k})\right)^{-\eta},
    \] with: 
    \begin{itemize}
    \item $\bm{\ell} = (\ell_{n},\ldots,\ell_{n+R}),\bm{m} = (m_{n},\ldots,m_{n+R})$ with all $\ell_j, m_j$ running over the integers in $\Delta_j := (N_{j-1},N_j]$;
    
    \item $\bm{h} = (h_{n},\ldots,h_{n+R}),\bm{k} = (k_{n},\ldots,k_{n+R})$ with
    $1 < h_j, k_{j} \leq 2^{2j}$ for all $n \leq j \leq n+R$;
    
    \item $c\left(\bm{h}\right) := \prod\limits_{j = n}^{n+R} \min\left(\frac{2^{j+1}}{h_j},1\right)$; 
    
    \item $J(\bm{\ell}, \bm{m}, \bm{h}, \bm{k}) := \sum\limits_{j=n}^{n+R} \big( h_j q_{\ell_j} - k_j q_{m_j} \big)$.
    \end{itemize}
    
    \vspace{1ex}
    
    Below we will show that $S(n, R)$ satisfies the following bound:
    
    \begin{equation} \label{bound_S_}
            S(n, R) \le
            10 N_n^2 2^{2n} (\log 2^{2n})^2 30^R \prod_{j=n}^{n+R} N_j 2^j + 1.
    \end{equation} Combining 
    \eqref{product}, \eqref{orthogonality}, and \eqref{bound_S_}, we finally find
    \[
        \mu\big(A(n, R)\big) \le 2^{25+4n+R+1} \cdot 4^{2(R+1)} \cdot 30^{R+1} (\log 4^n)^2 \Big( \prod_{j=n}^{n+R} \frac{2^j}{N_j} \Big) \le (\log 4^n)^2 \frac{2^{25+4n} \cdot 960^{R+1}}{1000^{R+1}} \le \frac{1}{1.01^R}, 
    \] where for the last inequality we have also used the estimates $(\log 4^n)^2 \le 2^{0.1 n}$ and $n \le \frac{R}{100}$. This concludes the proof, assuming that~\eqref{bound_S_} holds.\\

    \subsection*{Proof of Proposition~\ref{main_prop_2}: Counting problem}

    In order to show \eqref{bound_S_}, we partition the sum $S(n, R)$ in $S_1,S_2$ where $S_1$ corresponds to the sum over ($\bm{h}$, $\bm{k}$, $\bm{\ell}$ $\bm{m}$) where
    \[
        \big|J(\bm{\ell}, \bm{m}, \bm{h}, \bm{k})\big| < q_{N_n},
    \] and $S_2$ corresponds to the remaining vectors. For $S_2$, we use the trivial bound of $c(\bm{h}) \leq 1$ and $\big|J(\bm{\ell}, \bm{m}, \bm{h}, \bm{k})\big| \geq q_{N_n}$
    to obtain (recall that $R \asymp n$ and $q_n \ge 10^n$)
    \[
        S_2 \leq \frac{1}{\left(q_{N_n}\right)^{\eta}}\sum_{\bm{\ell}, \bm{m}, \bm{h},\bm{k}} 1 \ll
        \frac{1}{10^{\eta N_n}} \exp \big(O(R \log N_{n+R})\big) \leq \frac{\exp(O(n^2))}{\exp(\eta 2^n)},
    \]
    showing $S_2 \leq 1$ for $n$ sufficiently large. Next, $S_1$ can be trivially bounded as
        \begin{equation} \label{S1bound}
            S_1 \le \mathop{\sum_{\bm{\ell}, \bm{m}, \bm{h},\bm{k}}}_{|J(\bm{\ell}, \bm{m}, \bm{h}, \bm{k}) | < q_{N_n}} c\left(\bm{h}\right)c\left(\bm{k}\right).
        \end{equation} 
    
    The following Lemma will be a crucial step in the evaluation of the last sum.
    
    \begin{lemma}\label{local_count}
    Let $j \geq 1$ be fixed and set
    $c(h) := \min(1, \frac{2^{j+1}}{h})$ for $h \in \bN$. Further, let $\Delta_j$ and $(q_n)_{n \in \bN}$ be as above. Then uniformly in $B$, we have
    \[
        \mathop{\sum_{\ell,m \in \Delta_j}
        \sum_{1 \leq h,k \leq 2^{2j}} }_{| k q_{m} - hq_{\ell}  - B| < \frac14 q_{N_{j-1}}}
        c(h)c(k) \leq 20\cdot N_j 2^j.
    \]
    \end{lemma}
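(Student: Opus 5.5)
We organize the count by the pair $(\ell,m)$ and use lacunarity ($q_{N+1}/q_N>10$) to show that, once $\ell$, $m$ and one of the frequencies are fixed, the other frequency is essentially determined. Write $\Delta_j=(N_{j-1},N_j]$, so $\#\Delta_j = 1000\cdot 2^j$ and $N_j\asymp 2000\cdot 2^j$. All indices in $\Delta_j$ satisfy $q_\ell,q_m\ge q_{N_{j-1}+1}>10\,q_{N_{j-1}}$. The constraint
\[
|kq_m-hq_\ell-B|<\tfrac14 q_{N_{j-1}}
\]
therefore confines $kq_m-hq_\ell$ to a window of length $\tfrac12 q_{N_{j-1}}$. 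This window is shorter than $\tfrac1{20}\min(q_\ell,q_m)$.

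\emph{Step 1 (fix $\ell,m$ and $h$).} As $k$ varies, the values $kq_m$ are spaced by $q_m>\tfrac12 q_{N_{j-1}}$, so at most one $k$ satisfies the constraint. Symmetrically, for fixed $\ell,m,k$ at most one $h$ works. Hence for each $(\ell,m)$ the admissible pairs $(h,k)$ form the graph of an injective partial map $h\mapsto k(h)$.

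\emph{Step 2 (use the weights).} For fixed $(\ell,m)$ we bound $\sum_h c(h)c(k(h))$. We split according to whether $\ell=m$.
\begin{itemize}
\item If $\ell=m$, the condition reads $|(k-h)q_\ell-B|<\tfrac14 q_{N_{j-1}}$, which forces $k-h=d$ for a single fixed $d$. The sum is then $\sum_h c(h)c(h+d)\le\sum_h c(h)^2\le 2^{j+1}+2^{2j+2}\sum_{h>2^{j+1}}h^{-2}\le 2\cdot 2^{j+1}=4\cdot 2^j$.
\item If $\ell\neq m$, say $q_m>10q_\ell$ (the other case is symmetric), then $kq_m\approx hq_\ell+B$ gives $k\approx (hq_\ell+B)/q_m$. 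So $k$ grows at most like $h/10$ plus an offset depending on $B$. Using $c(h)c(k)\le c(h)$ gives the crude bound $\sum_h c(h)\ll 2^j j$, which is too large. Instead I would use $c(h)c(k)\le \min(c(h),c(k))$ together with injectivity in both directions. Pairs with $h\le 2^{j+1}$ and $k\le 2^{j+1}$ are at most $2^{j+1}$ in number by injectivity. Pairs where one frequency is large are controlled by $\sum_{h}c(h)^2$-type bounds via Cauchy--Schwarz: $\sum c(h)c(k(h))\le (\sum c(h)^2)^{1/2}(\sum c(k)^2)^{1/2}\le 4\cdot 2^j$.
\end{itemize}
Cauchy--Schwarz with injectivity in fact handles both cases uniformly, giving $\le 4\cdot 2^j$ for every $(\ell,m)$.

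\emph{Step 3 (sum over $(\ell,m)$).} Summing the per-pair bound over all $(\#\Delta_j)^2\asymp 2^{2j}$ pairs gives $2^{3j}$, which is too much. This is the main obstacle: we must show that for most pairs $(\ell,m)$ with $\ell\ne m$ there is no admissible $(h,k)$ at all, or only ones with tiny weight.

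For $q_m>10q_\ell$ and $h\le 2^{2j}$, we have $hq_\ell+B\in[B,\,B+2^{2j}q_\ell]$, while $kq_m\ge q_m$. So, for fixed $\ell$ and $B$, admissible $k\ge1$ requires $q_m\lesssim |B|+2^{2j}q_\ell$. The case $|B|$ large is handled by a symmetric argument with the roles of $\ell,m$ determined by $B$.

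The key point is to bound the weight via the gap $q_m/q_\ell\ge 10^{m-\ell}$. The relation $kq_m\approx hq_\ell+B$ forces the larger frequency to satisfy $h\gtrsim 10^{m-\ell}k$ whenever $|B|$ is small. Then $c(h)\lesssim 2^{j+1}10^{-(m-\ell)}/k$, and summing over $k$ with weight $c(k)$ gives $\ll 2^j\cdot 10^{-(m-\ell)}\cdot j$. Summing over $m-\ell\ge1$ gives a geometric series, so for fixed $\ell$ the total over all $m$ is $O(2^j)$ with a small constant. This yields $\ll \#\Delta_j\cdot 2^j\le 20N_j2^j$ after tracking constants.

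For general $B$, at most one of the two frequencies can be forced large by $B$. I expect the delicate part to be this $B$-dependence: showing that a single shift $B$ cannot make many pairs $(\ell,m)$ simultaneously admissible with large weight. I would handle it by fixing $k$ and $m$ and noting that $hq_\ell\in B-kq_m+O(q_{N_{j-1}})$ determines $\ell$ up to $O(1)$ choices, again by lacunarity.
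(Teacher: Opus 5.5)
\textbf{There is a genuine gap: the proposal does not establish uniformity in $B$.} Your Steps 1--2 are fine: for a fixed pair $(\ell,m)$ there is at most one $k$ per $h$, and the per-pair bound is $O(2^j)$. You also correctly see that what is needed is $O(2^j)$ per fixed $\ell$ after summing over $m$. But uniformity in $B$ is the step that actually carries the lemma, since it is applied with $B=B_{j+1}$ built from all higher levels. You leave it open, and the route you sketch does not work.

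Your decay $h\gtrsim 10^{m-\ell}k$ holds only for small $B$. For $B=q_m-q_\ell$, the pair $(\ell,m)$ has the solution $h=k=1$ of weight $1$, however large $m-\ell$ is. Your proposed fix also fails. You fix $k,m$ and claim that $hq_\ell\in kq_m-B+O(q_{N_{j-1}})$ pins $\ell$ to $O(1)$ choices. This is false in general: $q_\ell$ need only lie in a range of multiplicative width $2^{2j}$, so $\asymp j$ values of $\ell$ can occur (e.g.\ $q_\ell=16^\ell$, $kq_m-B=16^L$). Even granting it, the weights would not sum to the right order.

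The missing idea is to difference two solutions of the same pair. Set $m=\ell+r$, $\rho=q_m/q_\ell\ge 10^r$ and $B_\ell=B/q_\ell$. If $(h_0,k_0)$ and $(h_i,k_i)$ with $k_0<k_i$ both satisfy $|k\rho-h-B_\ell|<\tfrac14$, then $|(k_i-k_0)\rho-(h_i-h_0)|<\tfrac12$, and $B$ has disappeared. So for each pair, every solution except the first obeys the $B=0$ structure $k_i\ge i$, $h_i\ge 10^r i-1$, while the first solution contributes at most $1$.

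Separately, a solution exists at all only if $\rho\in\bigl[B_\ell/2^{2j},\,B_\ell+2^{2j}+\tfrac14\bigr]$ (lower end trivial if $B_\ell<0$). This window has multiplicative width at most $2^{2j+2}$, so lacunarity leaves only $O(j)$ admissible $r$ for each $\ell$, uniformly in $B$. This is exactly where $B$ is absorbed: the first solutions cost $O(j)$ per $\ell$, and the rest decay geometrically in $r$.

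\textbf{There is also a lost factor of $j$ in your small-$B$ computation.} From $c(h)\lesssim 2^{j+1}10^{-r}/k$ and $\sum_k c(k)/k\asymp j$ you get $\ll j\,2^j10^{-r}$. Summing over $r\ge1$ gives $O(j2^j)$, not the $O(2^j)$ you claim. You must keep the minimum $c(h)\le\min\bigl(1,2^{j+1}/(10^r k)\bigr)$. This yields a per-$r$ contribution of order $2^{j+1}(r+1)10^{-r}$, which is summable in $r$ to $O(2^j)$; it matches the paper's $1+2^{j+1}(\ln 9^r+3)/9^r$.

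This is not cosmetic. The lemma is applied at each of the $R\asymp n$ levels, and the resulting product must beat $\prod_j 2^j/N_j$. A stray factor $j$ per level destroys this, and for the same reason the explicit constant $20$ must actually be tracked rather than left as $\ll$.
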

   
    \begin{proof}  
 It suffices to examine the contribution of the terms with $m \ge \ell$; we will then only need to multiply by a factor of $2$ to derive the upper bound for the sum in question. \par   Let $r := m - \ell$. Then
    \[
    \big| kq_{r+\ell} - hq_{\ell} - B \big| < \frac{q_{N_{j-1}}}{4} \quad \Longrightarrow \quad \Big| k \frac{q_{r+\ell}}{q_{\ell}} - h - B_{\ell} \Big| < \frac{1}{4},
    \] where $B_{\ell} := \mfrac{B}{q_{\ell}}$. The aim is to prove that for any fixed $\ell \in \Delta_j$ and for any real $B_{\ell}$, we have
    \begin{equation} \label{the_aim}
    \sum_{r \leq N_j} \mathop{\sum_{1 \leq k,h \leq 2^{2j}}}_{| k \frac{q_{\ell+r}}{q_{\ell}} - h - B_{\ell}| < \frac{1}{4}}c(h)c(k) \leq 10 \cdot 2^j.
    \end{equation}
    The result will then follow after summing over all $\ell \in \Delta_j$. \par First, for $\ell \in \Delta_j$ write
    \[
    A_{\ell} := \Big\{0 \leq r \leq N_j: \exists h,k \in[1,2^{2j}] \text{ s.t. } \Big|k \frac{q_{\ell+r}}{q_{\ell}} - h  - B_{\ell} \Big| < \frac{1}{4} \Big\}.
    \]
   Note that for $r \in A_{\ell}$ one has
    \[
    \frac{B_{\ell}}{2^{2j}} \leq \frac{q_{\ell+r}}{q_{\ell}} \leq B_{\ell} + 2^{2j} + \frac14.
    \]
    When $B_{\ell} > 2^{2j} + \frac14$, one necessarily has
    \[
    \frac{B_{\ell}}{2^{2j}} \leq \frac{q_{\ell+r}}{q_{\ell}} \leq 2B_{\ell},
    \] and when $B_{\ell} < 2^{2j} + \frac14$, one has
    \[
    1 \leq \frac{q_{\ell+r}}{q_{\ell}} \leq 2^{2j+2}.
    \] Then, since $\frac{q_{\ell+r}}{q_{\ell}} \geq 10^r$, in both cases there are at most 
    \[  1 + \log_{10} 2^{2j+2} = 1 + \frac{2j + 2}{\log_{2}10}  \ls  \frac{2j}{3} + 2  \] 
    admissible values of $r$ in $A_\ell$. Let us fix one such $r$ and compute the sum
    \[
    \mathop{\sum_{1 \leq k,h \leq 2^{2j}}}_{| k \frac{q_{\ell+r}}{q_{\ell}} - h  - B_{\ell}| < \frac{1}{4}} c(h)c(k).
    \] Let $(k_0,h_0), (h_1,k_1), \ldots (h_T,k_T)$ denote all solutions to $|k \frac{q_{\ell+r}}{q_\ell}-h-B_\ell| < \frac14 $ with $k_0 < k_1 <\ldots < k_T$ (and consequently $h_0 < h_1 < \ldots < h_T$). Then for $i \geq 0$ we have
    \[
    \frac{q_{\ell+r}}{q_{\ell}}(k_i-k_0) = (h_i - h_0) + O(1),
    \] and since $k_i - k_0 \geq i$, we obtain $h_i \geq 10^ri - 1$. Consequently, for any $r \in A_{\ell}$ we get
    \begin{align*}
    \mathop{\sum_{1 \leq k,h \leq 2^{2j}}}_{| k \frac{q_{\ell+r}}{q_{\ell}} - h  - B_{\ell}| < \frac{1}{4}} \hspace{-5mm} c(h)c(k) &\leq 1 + \sum_{i \geq 1} c(i)c(9^ri) = 1 + \sum_{ i \le 2^{j+1} / 9^r} 1 + \sum_{2^{j+1} / 9^r < i \le 2^{j+1}} \frac{2^{j+1}}{9^r i} + \sum_{i > 2^{j+1}} \frac{2^{2(j+1)}}{9^r i^2} \\[-2ex]  &\ls 1 + 2^{j+1}\frac{\ln 9^r + 3}{9^r} .
    \end{align*}
    Summing over $r$ for $\ell$ fixed, since $A_\ell$ contains at most $\frac{2j}{3}+2$ values of $r$ we find
    \begin{multline*}
    \sum_{r \leq N_j} \mathop{\sum_{1 \leq k,h \leq 2^{2j}}}_{| k \frac{q_{\ell+r}}{q_{\ell}} - h - B_{\ell}| < \frac{1}{4}} \hspace{-4mm}c(h)c(k)  =    \sum_{r \in A_{\ell}}\hspace{-2mm} \mathop{\sum_{1 \leq k,h \leq 2^{2j}}}_{| k \frac{q_{\ell+r}}{q_{\ell}} - h - B_{\ell}| < \frac{1}{4}} \hspace{-3mm} c(h)c(k)  \ls  \sum_{r \in A_{\ell}} \Big(   1 + 2^{j+1}\frac{\ln 9^r + 3}{9^r}  \Big) \\  \ls
    \frac{2j}{3} + 2 +  \sum_{r \geq 0}  2^{j+1}\frac{\ln 9^r + 3}{9^r}  \ls 10 \cdot2^{j}. \qquad
    \end{multline*} 
    
    This proves the estimate in \eqref{the_aim} and thus the statement of the Lemma.
    \end{proof}

    \vspace{1ex}
    
    We can now find an upper bound for the sum appearing in the right hand side of \eqref{S1bound}.   
   The condition $|J(\bm{\ell}, \bm{m}, \bm{h}, \bm{k})| < q_{N_n}$ implies 
    \[ 
    q_{N_n} \geq \big|  k_{n+R} q_{m_{n+R}} - h_{n+R} q_{\ell_{n+R}} \big| - \Big| \sum_{j=n}^{n+R-1} \big( h_j q_{\ell_j} - k_j q_{m_j} \big)\Big|.
    \] Note that
    \[
    \Big| \sum_{j=n}^{n+R-1} \big( h_j q_{\ell_j} - k_j q_{m_j} \big)\Big| < \sum_{j=n}^{n+R-1} 2^{2j} q_{N_{j}} <  \sum_{j=n}^{n+R-1} \frac{2^{2j}}{N_{j+1}^{100}\cdots N_{n+R-1}^{100}}  q_{N_{n+R-1}} <  \frac{q_{N_{n+R-1}}}{2^{98(n+R)}}  \cdot 
    \] 
    Thus, we have necessarily
        \begin{equation} \label{restriction}
            \big|  k_{n+R} q_{m_{n+R}} - h_{n+R} q_{\ell_{n+R}}\big| < \frac{q_{N_{n+R-1}}}{2^{98(n+R)}} 
            + q_{N_n} < \frac{q_{N_{n+R-1}}}{4} \cdot 
        \end{equation} 
       We write $\bm{\ell}_j, n \leq j \leq n+R$ for the truncation $\bm{\ell}_j = (\ell_j,\ldots,\ell_{n+R})$ and define analogously $\bm{m}_j, \bm{h}_j,\bm{k}_j$. Letting $B_j = B_{j}(\bm{\ell}_j, \bm{m}_j, \bm{h}_j, \bm{k}_j) := \sum_{i = j}^{n+R} \big( h_i q_{\ell_i} - k_i q_{m_i} \big)$, we obtain by the same argumentation as above that any solution to $|J(\bm{\ell}, \bm{m}, \bm{h}, \bm{k})| < q_{N_n}$ satisfies for all $n < j \leq n+R$
    
    \begin{equation} \label{restriction_j}
            \big|k_{j} q_{j}- h_{j} q_{\ell_{j}} - B_{j+1}\big| < \frac{q_{N_{j-1}}}{4}.
    \end{equation} 
       Combining \eqref{S1bound} with \eqref{restriction_j}, we find 
    \begin{multline*}
         S_1  \ls  \sum_{\substack{\bm{\ell}, \bm{m}, \bm{h},\bm{k}, \\ |J(\bm{\ell}, \bm{m}, \bm{h}, \bm{k})| < q_{N_n}}} c(\bm{h}) c(\bm{k}) \leq
           \mathop{\mathop{\mathop{\sum\sum}_{\ell_{n+R}, m_{n+R} \in \Delta_{n+R}}}_{1 \leq h_{n+R}, k_{n+R} \leq 2^{2(n+R)}}}_{| k_{n+R} q_{m_{n+R}} - h_{n+R} q_{\ell_{n+R}} | <   \frac14 q_{N_{n+R-1}} }  \hspace{-4mm} c(h_{n+R}) c(k_{n+R}) \cdot \\
           \cdot  \mathop{\mathop{\mathop{\sum\sum}_{\ell_{n+R-1}, m_{\ell+R-1} \in \Delta_{n+R-1}}}_{1 \leq h_{n+R-1},k_{n+R-1} \leq 2^{2(n+R-1)}}}_{ | k_{n+R-1} q_{m_{n+R-1} - h_{n+R-1} q_{\ell_{n+R-1}} - B_{n+R} } | <  \frac14 q_{N_{n+r-1}}} \hspace{-20mm} c(h_{n+R-1})c(k_{n+R-1}) \cdot \ldots
            \\ 
            \ldots \cdot \mathop{\mathop{\mathop{\sum\sum}_{\ell_{n+1}, m_{\ell+1} \in \Delta_{n+1}}}_{1 \leq h_{n+1},k_{n+1} \leq 2^{2(n+1)}}}_{  | k_{n+1} q_{m_{n+1}} - h_{n+1} q_{\ell_{n+1}} - B_{n+2} | < \frac14 q_{N_n}}  \hspace{-5mm}c(h_{n+1})c(k_{n+1})
            \cdot 
            \mathop{\mathop{\sum\sum}_{\ell_{n}, m_{\ell} \in \Delta_{n}}}_{1 \leq h_{n},k_{n} \leq 2^{2n}}c(h_{n})c(k_{n}).
    \end{multline*}   
    The  innermost sums get estimated trivially, i.e.,
        \[  \mathop{\mathop{\sum\sum}_{\ell_{n}, m_{\ell} \in \Delta_{n}}}_{1 \leq h_{n},k_{n} \leq 2^{2n}}c(h_{n})c(k_{n})
        \leq 2  N_n^2 2^{2n}.
        \]
        Next, we use Lemma \ref{local_count} to deduce
        \[
           \mathop{\mathop{\mathop{\sum\sum}_{\ell_{n+1}, m_{\ell+1} \in \Delta_{n+1}}}_{1 \leq h_{n+1},k_{n+1} \leq 2^{2(n+1)}}}_{ | k_{n+1} q_{m_{n+1}} - h_{n+1} q_{\ell_{n+1}} - B_{n+2}| < \frac14 q_{N_n} }  \hspace{-5mm}c(h_{n+1})c(k_{n+1})  \leq 20 N_{n+1} 2^{n+1}.
        \]
    We continue to apply Lemma \ref{local_count} iteratively to the remaining next inner sums, which in the end shows
    \[
    S_1 \leq N_n^2 2^{2n} \prod_{j = n+1}^{n+R} 20 N_j 2^j
    \leq N_n^2 30^R 2^{2n} \prod_{j = n+1}^{n+R} N_j 2^j,
    \] and this completes the proof of~\eqref{bound_S_}.

    \subsection*{Proof of Corollary \ref{cor1}}
    
    Here we deduce Corollary~\ref{cor1} from Proposition~\ref{main_prop} below using Kaufman's measures. This is done exactly as in  previous works, and we include the proof for completeness.
    
    We briefly mention that Kaufman constructed probability measures supported on $\Bad$ that satisfy the following two key properties: \begin{itemize}
    \item[(i)] $\mu(J) \ls c_1|J|^\rho$ for all intervals $J \subseteq [0, 1],$ 
    \item [(ii)] $\widehat{\mu}(t) \ls c_2(1 + |t|)^{-\eta} $ for all $t\in \bR.$
    \end{itemize} Here $c_1, c_2>0$ are absolute constants, while $0 < \rho <1$ can be chosen arbitrarily close to $1$ and $\eta>0$ depends on the choice of $\rho$. For the precise values of the parameters appearing here, we refer to the refinement of Kaufman's construction due to Queffelec \& Ramar\'e \cite{QR}.   
    
    Property (i) above is the one which allows for the calculation of the Hausdorff dimension of $\bG$ in the result of Pollington and Velani via the \emph{Mass Distribution Principle} \cite{falconer}.

    Probability measures for which $\widehat{\mu}(t) \to 0$ as $|t|\to \infty$ are known as \emph{Rajchman measures} and have been studied extensively in the literature, see \cite{lyons}. It is worth mentioning that when a measure $\mu$ supported on a set $F\subseteq \bR$ satisfies property (ii) for some $ 0< \eta < 1/2,$ then the set $F$ has Hausdorff dimension $\dimH(F) \gs 2\eta,$ see \cite{mattila}.

    \begin{prop} \label{main_prop}
    Let $\alp \in \cK$ and $\gamma \in [0,1)$. Furthermore, let $\mu$ be a probability measure on $[0, 1)$ such that $\widehat{\mu}(t) \ll (1 + |t|)^{-\eta}$ for some $\eta > 0$. Then for $\mu$-almost all $\beta \in [0,1]$, one has
    \begin{equation} \label{ineq_in_main_prop}
        n \|n\alp -\gamma\| \|n\beta-\delta\| \ls \frac{c}{\log n} \qquad \text{ for infinitely many } n\gs 1 
    \end{equation} uniformly in $\delta \in [0,1]$.
    \end{prop}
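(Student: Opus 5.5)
Fix $\alpha\in\cK$ and $\gamma$. The plan is to pass to a lacunary subsequence $(q_k)$ of integers on which the first factor $\|q_k\alpha-\gamma\|$ is of size $O(1/q_k)$, and then apply Theorem~\ref{thm1} to the second factor along this subsequence.

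\textbf{Step 1: constructing $(q_k)$.} Let $(Q_k)$ be the convergent denominators of $\alpha$. Since $\alpha\in\cK$, we have $Q_k\le e^{Ak}$ for some $A$. The Fibonacci-type recursion gives $Q_{k+2}\ge 2Q_k$, so $\log Q_k\asymp k$.

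For the inhomogeneous target $\gamma$, we use the Ostrowski expansion of $\gamma$ with respect to $\alpha$ (equivalently, the standard inhomogeneous version of Minkowski/Dirichlet for convergents). This yields, for every $k$, an integer $n_k$ with $Q_k\le n_k\le 2Q_{k+1}$ (or a comparable range) such that
\[
\|n_k\alpha-\gamma\|\ll \frac{1}{Q_k}\ll_\alpha \frac{1}{n_k}.
\]
The second bound uses $Q_{k+1}/Q_k$ being bounded on average. If partial quotients are unbounded, I would pass to the subsequence of indices with $Q_{k+1}\le e^{2A}Q_k$, which still has positive lower density in $k$ since $\alpha\in\cK$.

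Choosing every $T$-th such index, for $T$ fixed and large, we obtain a sequence with $n_{k+1}/n_k\ge 2$, hence lacunary. It satisfies $\|n_k\alpha-\gamma\|\le C_1/n_k$ and, crucially, $\log n_k\asymp k$.

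\textbf{Step 2: applying Theorem~\ref{thm1}.} Theorem~\ref{thm1}, applied to the real lacunary sequence $(n_k)$ and the measure $\mu$ satisfying \eqref{rajchman_cond}, gives a constant $C_2$ such that for $\mu$-a.e.\ $\beta$,
\[
\|n_k\beta-\delta\|<\frac{C_2}{k}\quad\text{for infinitely many } k, \text{ uniformly in } \delta\in[0,1).
\]
For each such $k$,
\[
n_k\|n_k\alpha-\gamma\|\,\|n_k\beta-\delta\|\le C_1\frac{C_2}{k}\le \frac{c}{\log n_k},
\]
using $\log n_k\le A'k$. Since $n_k\to\infty$, this gives infinitely many $n$ satisfying \eqref{ineq_in_main_prop}, proving Proposition~\ref{main_prop}.

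\textbf{Step 3: deducing Corollary~\ref{cor1}.} Take Kaufman measures $\mu_\rho$ supported on $\Bad$ with $\mu_\rho(J)\le c_1|J|^\rho$ and Fourier decay exponent $\eta(\rho)>0$. The set of $\beta$ given by Step 2 has full $\mu_\rho$-measure. By the mass distribution principle its intersection with $\Bad$ has Hausdorff dimension at least $\rho$. Taking the union over $\rho\to1$ gives full dimension. Since $\delta$ may be reduced modulo $1$, this also covers $\delta\in\bR$.

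\textbf{Main obstacle.} The delicate point is Step 1: producing a lacunary sequence with $\|n_k\alpha-\gamma\|\ll 1/n_k$ for arbitrary $\gamma$ while keeping $\log n_k\ll k$. Inhomogeneous approximation only guarantees $\|n\alpha-\gamma\|\ll 1/Q_k$ with $n$ up to about $Q_{k+1}$, so I would rely on the density argument from $\cK$ to select indices where $Q_{k+1}\asymp Q_k$.
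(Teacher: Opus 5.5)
Your proposal is correct and follows the paper's argument: produce a lacunary integer sequence $(q_k)$ with $\|q_k\alpha-\gamma\|\ll 1/q_k$ and $\log q_k\asymp k$, apply Theorem~\ref{thm1} to get $\|q_k\beta-\delta\|\ll 1/k$ infinitely often uniformly in $\delta$, and multiply the two bounds. The only difference is that the paper takes Step~1 directly from \cite[Lemma~2.1]{cz} (with $8^k<q_k\le 4^{6\Lambda(\alpha)k}$), while you sketch it yourself; your density argument over indices with $Q_{k+1}\le e^{2A}Q_k$ is valid but not needed, since rotating the points $\{n\alpha\}$, $0\le n<Q_{k+1}$, by $Q_k\alpha$ already gives, for every $k$, some $n\in[Q_k,Q_k+Q_{k+1})$ with $\|n\alpha-\gamma\|<1/Q_{k+1}<2/n$.
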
 
    
    \vspace{1ex}
    
    \textit{Proof of Corollary~\ref{cor1} from Proposition~\ref{main_prop}.} 
    
    Let $0 < \rho < 1$ be arbitrary, and let $\mu$ be one of Kaufman's measures such that (i) holds for $\rho$. Let $\mathcal{U}$ be the set of $\beta \in [0, 1)$ such that there exists $\delta \in [0, 1)$ such that the inequality~\eqref{ineq_in_main_prop} holds only for finitely many $n \in \bN$. Then by Proposition~\ref{main_prop} one has $\mu(\mathcal{U})=0$. Now take $\mathfrak{S} = \Bad \cap ([0, 1] \backslash \mathcal{U})$. Then $\mu(\mathfrak{S})=1$. Next, by the mass distribution principle~\cite{falconer} one has $\dimH(\mathfrak{S}) \gs \rho$. Since $\rho$ can be chosen arbitrarily close to~$1$, we conclude that $\dimH(\mathfrak{S}) = 1$. \\
    
    \textit{Proof of Proposition~\ref{main_prop} from Theorem~\ref{thm1}.} 
    
    Given $\alp \in \cK$ and $\gam \in [0, 1)$, it is shown in \cite[Lemma 2.1]{cz} that there exists a lacunary sequence $(q_n)_{n \in \N}$ of integers such that 
    \begin{equation} \label{first}
    \|q_n\alp - \gamma\| \ls \frac{8}{q_n} 
    \end{equation} and
    \[
    8^n < q_n \le 4^{6 \Lambda(\alpha) n}
    \] for all $n \in \N$. Here
    \[
    \Lambda(\alpha) = \sup \Big\{ \frac{q_k (\alpha)}{k}: k \ge 1 \Big\}.
    \] We note that $\Lambda(\alpha)$ equals the L\'evy constant $\frac{\pi^2}{12 \log 2}$ for Lebesgue almost all $\alpha \in [0, 1)$. In particular, $\Lambda(\alp)<\infty$ for all $\alpha \in \cK$ and $\cK$ is a set of full Lebesgue measure. Then, by Theorem~\ref{thm1}, for $\mu$-almost all $\beta \in [0, 1)$ one has
    \begin{equation} \label{second}
    \norm{q_n \beta - \delta} \le \frac{c}{n} \qquad \text{for infinitely many } n \gs 1
    \end{equation} for all $\delta \in [0, 1)$. Thus, multiplying~\eqref{first} and~\eqref{second}, we find
    \[
    \norm{q_n \alpha - \gamma} \norm{ q_n \beta - \delta} \le \frac{8}{q_n} \cdot \frac{c}{n} \le \frac{c_2}{q_n \log q_n}
    \] as desired. We note that in~\cite{ct} and~\cite{eduard} the  bound $\norm{q_n \beta - \delta} \ll \frac{(\log n)^{\kappa + \eps}}{n}$ was used instead of~\eqref{second}.

    %SECTION 4: Lower bound
    
    %%%%%%%%%%%%%%%%%%%%%%%%%%%%%%%%%%%%
    
    \section{The non-covering case} \label{sec_lower}
    
  We  now proceed with the proof of Theorem \ref{thm2}. We will employ the colored binary tree  associated to the sequence $\{q_nx\}$, with  the number of points that are placed at the first $n$ levels of the tree equal to
    \[   
        N_n := \floor{L \cdot 2^0} + \ldots + \floor{L \cdot 2^n}. 
    \] Here $L \in (0,1)$ will be a sufficiently small parameter,  as in Section~\ref{sec_binary}. Our aim is to prove that for a suitable choice of $L,$ the tree contains a thick infinite uncolored path for Lebesgue almost all $x\in [0,1)$. As alluded to earlier, we follow the strategy outlined in Section~\ref{sec_binary} for random i.i.d. points, with the appropriate modifications that are necessary to make up for the lack of independence.\par 
Fix a starting level $n_0$. For $n \gs n_0,$ we write $\cI_n$ for the set of vertices at level $n$ such that all of their ancestors at levels $n-1, \ldots, n_0$  are uncolored and have uncolored left and right neighbors, and    
    \[
        I(\cI_n) := \bigcup_{k: I_{n,k} \in \cI_n} I_{n,k}
    \] for the union of these vertices.  
Note that $\# \cI_{n_0} = 2^{n_0}$. We denote the intervals in $\cI_n$ by $J_{n,j}$ for $1 \le j \le \#\cI_n$. Analogously to the case of independent points, we aim to find a lower bound for the probability that all events \begin{equation} \label{survived_size}
    \# \cI_n \ge 2^{(1 - \frac{\eps}{2})n}, \qquad n\gs n_0
\end{equation}hold simultaneously, where $\eps>0$ is as in the statement of Theorem~\ref{thm2}. \par We may now introduce the main technique that we use to make up for the lack of independence of the points $\{q_n x\}.$  For each $n\gs 1,$ write $(N_{n-1},N_n] = \Delta_n' \cup \Delta_n,$ where $\Delta_n'$ and $\Delta_n$ are consecutive disjoint intervals of integers with \[ \# \Delta_n ' = \floor{L \cdot 2^{(1-\frac{\eps}{2})n}} \qquad (n\gs 1). \] With this partition of the interval $(N_{n-1}, N_n],$  the points $\{q_N x\}$ that are placed on the $n$-th level of the tree are split into a ``buffer block" of points $\{q_Nx\}, N \in \Delta_n'$ we shall ignore, and a ``main block" of points $\{q_Nx\}, N \in \Delta_n$ we will work with. \par By ignoring the points that come from the buffer blocks, we ensure that the following property is satisfied: whenever $M,N$ are indices of points placed on different levels of the tree and $\{q_Nx\}$ is in one of the main blocks, i.e. whenever $M < N_n \ls N$ and $N \in \Delta_n$ for some $n,$ then  by \eqref{gap_condition} we ensure that
\[
    \frac{q_N}{q_M} \ge \Big( 1 + \frac{1}{N^{1-\eps}} \Big)^{\frac{L}{10} \cdot N^{1-\eps/2}} \ge \exp \Big( \frac{L}{100} N^{1 - \frac{\eps}{2}} \cdot \frac{1}{N^{1-\eps}} \Big) \gg N^{100}.
\]
At the same time, instead of calculating a lower bound for the probability that all events $\#\cI_n \gs 2^{(1-\frac{\varepsilon}{2})n}, n\gs n_0$ hold simultaneously, it will be enough to bound from below the probability that at most $2L\# \cI_n$ intervals of $\cI_n$ get colored by the points $\{q_Nx\}, N \in \Delta_n.$ If this event happens, then the points $\{q_Nx\}, N \in \Delta_n$ from the main blocks exclude at most $6L\#\cI_n$ intervals of $\cI_n$ from belonging to some thick uncolored path. Taking into account the $\#\Delta'_n \ls L \#\cI_n$ points from the buffer block, there will be at most $9L\#\cI_n$ intervals of $\cI_n$ that cannot belong to a thick uncolored path, and then inductively
\begin{equation*} \label{growth_req}
    \#\cI_{n+1} \ge 2  (1 - 9L)\cdot \#\cI_n \ge 2(1-9L) \cdot 2^{(1-\frac{\eps}{2})n} \ge 2^{(1-\frac{\eps}{2})(n+1)},
\end{equation*} when $L>0$ is chosen sufficiently small with respect to $\varepsilon.$ Since this reduction can be made, in the rest of this section, whenever we speak about colored vertices of the binary tree, we will always implicitly mean colored by points in some of the main blocks; the points coming from the buffer blocks will be completely ignored since they are treated in a deterministic (worst case) way.\\

An arbitrary dyadic interval $I_{r,j} = [ \frac{j}{2^r}, \frac{j+1}{2^r})$ at level  $n_0 \le r \le n$ is colored by a point $\{q_Nx\}, N\in \Delta_n$ if and ony if
\begin{equation} \label{original_cond}
        \{ q_N x \} \in I_{r,j} = \Big[ \frac{j}{2^r}, \frac{j+1}{2^r} \Big) \ \ \Longleftrightarrow \ \ x \in \bigcup_{k=0}^{q_N} \Big[ \frac{k}{q_N} + \frac{j}{2^r q_N}, \frac{k}{q_N} + \frac{j+1}{2^r q_N} \Big).
\end{equation}
Because we seek a lower bound for the probability that a thick uncolored path exists, we can replace the sets appearing in \eqref{original_cond} by slightly enlarged sets; this will only decrease the probability. More precisely, we replace the condition in \eqref{original_cond} by 
        \begin{equation} \label{cond_on_x_}
            x \in J_{N,r,j} : = \bigcup_{k=0}^{q_N} \Big[ \frac{c_{k,1}}{2^{\ceil{\log q_N} + 2r}}, \frac{c_{k,2}}{2^{\ceil{\log q_N} + 2r}} \Big),
        \end{equation} where $c_{k,1}$ and $c_{k,2}$ are the largest and respectively  smallest integer  such that
        \[
            \frac{c_{k,1}}{2^{\ceil{\log q_N} + 2r}} \le \frac{k}{q_N} + \frac{j}{2^r q_N} \quad \text{resp.} \quad \frac{c_{k,2}}{2^{\ceil{\log q_N} + 2r}} \ge \frac{k}{q_N} + \frac{j+1}{2^r q_N} \cdot
        \] Therefore, if for $N_{r-1} < N \le N_r$ we define the function $X_{N, r, j} (x) := \Id_{J_{N,r,j}} (x)$ we clearly have 
        \[
            \Id \big( \{ q_N x\} \in I_{r,j} \big) \le X_{N,r,j} (x).
        \] In view of this, in order to obtain a thickened uncolored (finite) path from $n_0$ to $n$ consisting of intervals $(I_{r,j_r})_{r = n_0}^{n}$ it suffices that\footnote{for completeness, we define $j_r+t \pmod {2^r}$} 
        \begin{equation} \label{thick}
            \mathop{\mathop{\sum\sum}_{n_0 \le r \le n}}_{N\in \Delta_r} X_{N,r,j_r+t}(x) = 0 \qquad \text{ for } t = -1,0,1.
        \end{equation} The reason for considering these enlarged intervals with dyadic endpoints is that they allow for an easier calculation of the conditional probabilities that show up later in the proof.

        We now define the desired event at the level $n$ as
        \begin{multline*}
            B_n := \Big\{x \in [0, 1): \text{ the number of paths $(j_r)_{r = n_0}^{n}$ such that \eqref{thick} holds is} \geq 2^{(1- \tfrac{\varepsilon}{2})n} \Big\}.
        \end{multline*}

        Since all events $\#\cI_r \gs 2^{(1-\frac{\varepsilon}{2})r}, n_0 \ls r \ls n$ hold as long as the event $B_{n_0} \cap \ldots \cap B_n$ takes place, in the remaining part of the proof we will show   that
        \[
            \lambda(B_{n_0} \cap \ldots \cap B_n) \ge \prod_{r=n_0}^n \big( 1 - 2^{-\frac{\eps r}{4}} \big).
        \] The desired result will then follow by taking $n \to \infty$ and then $n_0 \to \infty$. Since for all $n\gs n_0$ we have
        \[
            \lambda(B_{n_0} \cap \ldots \cap B_n) = \lambda(B_n | B_{n-1} \cap \ldots \cap B_{n_0}) \cdot \lambda(B_{n-1} | B_{n-2} \cap \ldots \cap B_{n_0}) \cdot \ldots \cdot \lambda(B_{n_0}),
        \]   we assume by induction hypothesis that
        \[
            \lambda(B_r | B_{r-1} \cap \ldots \cap B_{n_0}) \ge 1 - 2^{\frac{\eps r}{4}} \qquad \text{for } n_0 \le r \le n-1
        \] (recall that for $r=n_0$ it is trivially true by the definition of $\cI_{n_0}$). Thus, we only need to show
        \[
            \lambda(B_n | B_{n-1} \cap \ldots \cap B_{n_0}) \ge 1 - 2^{-\frac{\eps n}{4}}.
        \]  As in the previous section, let $\cP_1,\ldots, \cP_T$ be all possible collections of paths $(j_r)_{r=n_0}^{n}$ with cardinality $\#\cP_i \gs 2^{(1-\eps/2)n}$ and define 
        \[
            V_i = \Big\{x \in [0, 1): \eqref{thick}  
            \text{ holds iff  } (j_r)_{r=n_0}^n \in \mathcal{P}_i\Big\} \qquad (1\ls i \ls T).
        \] Then we can decompose the event $B_{n-1} \cap \ldots \cap B_{n_0}$ into the disjoint union
        \[
            B_{n-1} \cap \ldots \cap B_{n_0} = V_1 \cup V_2 \cup \ldots \cup V_T.
        \] Since 
        \begin{equation} \label{fixed_collection}
            \lambda(B_n | V_1 \cup \ldots \cup V_T ) \ge \min_{1 \le i \le T} \lambda(B_n | V_i),
        \end{equation}
         we can work with a fixed collection $\cP : = \cP_i$ of paths of cardinality $\ge 2^{(1-\eps/2)n}$, the corresponding set of vertices $\cI_n$ of the same cardinality, and the corresponding event $V := V_i$. It will be enough to find a lower bound for $\lam(B_n|V).$ \\

         \medskip

        Let $g^{+} $ be the function from Lemma~\ref{smoothing}. Then the function $t \mapsto g^{+} ( 2^{n+1} ( t - t_{n,j} ) ) $ is a weighted indicator for $I_{n,j}$ (recall that $t_{n,j}$ is the center of $I_{n,j}$). By the properties of $g^{+}$, it is $\ge 1$ on an interval of size at least $\mfrac{10}{9}\,|I_{n,j}|$ and is supported on an interval of size at most $\mfrac{3}{2}\,|I_{n,j}|$. Let $\cI_n$ be the set of intervals on the level $n$ corresponding to the paths from $\cP$.  Define
        \[
              C_n := \mathop{\mathop{\sum\sum}_{  \,I_{n,j} \in \cI_n}}_{N \in \Delta_n} g^{+} \Big( 2^{n+1} \big( \{ q_N x\} - t_{n,j} \big) \Big).
        \] The function $C_n$ is a smoothened version of the counting function of points $\{q_Nx\}, N\in \Delta_n$ lying in $\cI_n.$ \par  We begin by computing  the conditional expectation of $  C_n $. The sigma-algebra generated by the random variables $\big\{ X_{N, r, j} : N\in \Delta_r, \ 0 \le j < 2^r, \ n_0 \leq r \leq n -1\big\}$ is clearly contained in the sigma-algebra generated by the sets 
        \begin{equation} \label{generic_set}
            D_{\ell} := \Big[ \frac{\ell}{2^{\lceil \log q_{N_{n-1}}\rceil+2r}},  \frac{\ell+1}{2^{\lceil \log q_{N_{n-1}}\rceil + 2r}} \Big), \quad 0 \leq \ell < 2^{\lceil \log q_{N_{n-1}}\rceil+2r}.
        \end{equation} 
     Let $D$ be an (arbitrary) union of some intervals of the form~\eqref{generic_set} (corresponding to $N_{n-1}$) such that $D \subseteq V$. Then 
        \[
            \bE [  C_n | D] = \frac{1}{\lambda(D)} \int_{D}  \mathop{\mathop{\sum\sum}_{  \,I_{n,j} \in \cI_n}}_{N \in \Delta_n}  g^{+} \Big( 2^{n+1} \big( \{ q_N x\} - t_{n,j} \big) \Big) dx. 
        \] Applying Poisson summation, we find
        \begin{align*}
            \bE [  C_n | D] &= \frac{1}{\lambda(D)} \int_{D}  \mathop{\mathop{\sum\sum}_{  \,I_{n,j} \in \cI_n}}_{N \in \Delta_n} \sum_{h \in \bZ} g^{+} \Big( 2^{n+1} \big( q_N x - t_{n,j} + h \big) \Big) dx  \\
            &= \frac{1}{\lambda(D)} \int_{D}  \mathop{\mathop{\sum\sum}_{  \,I_{n,j} \in \cI_n}}_{N \in \Delta_n} \frac{1}{2^{n+1}} \sum_{h \in \bZ} \widehat{g}^{+} \Big( \frac{h}{2^{n+1}} \Big) e\Big( h\big(q_N x - t_{n,j}\big) \Big) dx.
        \end{align*} Assume that $D = D_1 \cup \ldots \cup D_H$, where $D_1, \ldots, D_H$ are disjoint intervals of the form~\eqref{generic_set}, so that
        \[
            \lambda(D_i) = \frac{1}{2^{\lceil \log q_{N_{n-1}} \rceil + 2(n-1)}} \qquad \text{for all } 1 \le i \le H.
        \] Then
        \begin{equation} \label{integral_for_exp}
           \begin{split} \frac{1}{\lambda(D)} \int_{D} e\big( hq_N x \big) dx &= \frac{2^{\lceil \log q_{N_{n-1}} \rceil + 2(n-1)}}{H} \sum_{i=1}^H \int_{D_i} e\big( hq_N x \big) dx \\  &\ll 2^{\lceil \log q_{N_{n-1}} \rceil + 2(n-1)} \cdot \frac{1}{hq_N}. \end{split}
        \end{equation}
       Thus, using the decay rate of $|\widehat{g}^{+}(x)|$, we find
        \begin{multline} \label{expectation_asymptotics}
            \bE [  C_n | D ] = \widehat{g}^{+} (0) \frac{\# \cI_n \cdot (N_n - N_{n-1})}{2^{n+1}} + \\
            +O\bigg( \frac{1}{\lambda(D)}  \mathop{\mathop{\sum\sum}_{  I_{n,j} \in \cI_n}}_{N\in \Delta_n} \frac{1}{2^{n+1}} \sum_{0 < |h| \le 2^{2n}} \frac{1}{|h|} \Big| \int_{D} e\big( h q_N x \big) dx \Big| \bigg) + O\Big( \# \cI_n N_n \frac{1}{2^{100n}} \Big) = \\
            =\widehat{g}^{+} (0) \frac{\# \cI_n \cdot (N_n - N_{n-1})}{2^{n+1}} +
            O\bigg( q_{N_{n-1}} 2^{2n} \# \cI_n \sum_{ N \in \Delta_n} \sum_{0 < |h| \le 2^{2n}} \frac{1}{|h|} \cdot \frac{1}{|h| q_N} \bigg) + \\
            + O\Big( \# \cI_n N_n \frac{1}{2^{100n}} \Big) = \widehat{g}^{+} (0) \frac{\# \cI_n \cdot (N_n - N_{n-1})}{2^{n+1}} + O\Big( 2^{4n} \frac{q_{N_{n-1}}}{q_{N_{n-1}+1}} \Big) + O\Big( \# \cI_n N_n \frac{1}{2^{100n}} \Big) = \\
            \widehat{g}^{+} (0) \frac{\# \cI_n \cdot (N_n - N_{n-1})}{2^{n+1}} + O\big(2^{-90 n}\big).
        \end{multline} 
        In particular, for any set $D$ of the form~\eqref{generic_set} such that $D \subseteq V$ one has
        \[
            \bE \big[   C_n | V \big] = \bE \big[   C_n | D \big] + O\big( 2^{-90n} \big).
        \] Recall that in light of~\eqref{fixed_collection}, our goal is to get a lower bound for $\lambda \big(B_n  | V \big).$  By the discussion in the beginning of this section, if the event $B_n$ does not hold, then $C_n \gs 3L \#\cI_n,$ and thus also $C_n \gs 2 \bE[C_n|V]$ by~\eqref{expectation_asymptotics} and the properties of $g^{+}(x)$ (recall that $2 \le \widehat{g}^{+}(0) \le \mfrac{5}{2}$), which in turn implies 
        \[
            \big|    C_n   - \bE[  C_n | V] \big| > \bE[  C_n | V].
        \]
        It is thus enough to provide an upper bound for $\lambda\big(\big|   C_n - \bE[ C_n | V] \big| > \bE[ C_n | V]  \big|  V \big).$ \par Applying Chebyshev's inequality, we find 
        \begin{multline*}
            \lambda\Big(\big|   C_n - \bE[  C_n | V] \big| > \bE[  C_n | V] \ \Big| \ V \Big) \le \\
            \ls \frac{1}{\bE[  C_n | V]^2 \cdot \lambda(V)} \int_{V} \bigg( \mathop{\mathop{\sum\sum}_{  \,I_{n,j} \in \cI_n}}_{N \in \Delta_n}  g^{+} \Big( 2^{n+1} \big( \{ q_N x\} - t_{n,j} \big) \Big) - \bE[ C_n | V] \bigg)^2 dx \\
            = \frac{1}{\bE[  C_n | V]^2 \cdot \lambda(V)} \int_{V} \bigg( \mathop{\mathop{\sum\sum}_{  \,I_{n,j} \in \cI_n}}_{N \in \Delta_n }   g^{+} \Big( 2^{n+1} \big( \{ q_N x\} - t_{n,j} \big) \Big) \bigg)^2 dx - 1 \qquad \\
            = \frac{1}{\bE[  C_n | V]^2 \cdot \lambda(V)} \int_{V} \mathop{\mathop{\sum\sum}_{I_{n,j_1}, I_{n,j_2} \in \cI_n}}_{  M,N \in \Delta_n} g^{+} \Big( 2^{n+1} \big( \{ q_M x\} - t_{n,j_1} \big) \Big) g^{+} \Big( 2^{n+1} \big( \{ q_N x\} - t_{n,j_2} \big) \Big) dx - 1.
        \end{multline*}
        
        We first evaluate the contribution of  $M,N$ that are close to each other. Namely, when $|M-N| \le 2^{(1-3\eps/4)n}$ we use the following trick: since a given point $\{q_N x\}$ can belong to  the support of $t\mapsto g^+(2^{n+1}(t-t_{n,j}))$ for at most $2$ values of $j,$ we have
        \[
            \sum_{  I_{n,j} \in \cI_n} g^{+} \Big( 2^{n+1} \big( \{ q_N x\} - t_{n,j} \big) \Big) \le 2.
        \] Applying this inequality to the sum over $j_2$, we find
\begin{multline*}
            \frac{1}{\bE[  C_n| V]^2 \lambda(V)} \int_{V}  \mathop{\mathop{\sum\sum}_{I_{n,j_1}, I_{n,j_2} \in \cI_n}}_{ \substack{ M,N \in \Delta_n : |M-N| \le 2^{(1-3\eps/4)n}}} \hspace{-5mm} g^{+} \Big( 2^{n+1} \big( \{ a_M x\} - t_{n,j_1} \big) \Big)    
            g^{+} \Big( 2^{n+1} \big( \{ q_N x\} - t_{n,j_2} \big) \Big) dx  \\
             \le \frac{2}{\bE[  C_n | V]^2 \lambda(V)} \int_{V} \mathop{\mathop{\sum\sum}_{  I_{n,j_1} \in \cI_n}}_{\substack{  M,N \in \Delta_n \\ |M-N| \le 2^{(1-3\eps/4)n}}} \hspace{-2mm} g^{+} \Big( 2^{n+1} \big( \{ a_M x\} - t_{n,j_2} \big) \Big) dx  \\
           \le 2\frac{2^{(1-3\eps/4)n}}{\bE[  C_n | V]^2} \bE[  C_n | V] \ll \frac{2^{(1-3\eps/4)n}}{\# \cI_n} \ll \frac{2^{(1-3\eps/4)n}}{2^{(1-\eps/2)n}} \ll 2^{-\frac{\eps n}{4}}.  \qquad
        \end{multline*}

        It remains to evaluate the contribution from $M, N$ such that $|M-N| > 2^{(1-3\eps/4)n}$. Here we apply Poisson summation again:
        \begin{multline*}
            \frac{1}{\bE[  C_n | V]^2 \cdot \lambda(V)} \int_{V} \hspace{-6mm}\mathop{\mathop{\sum\sum}_{I_{n,j_1}, I_{n,j_2} \in \cI_n}}_{\substack{  M,N \in \Delta_n : |M-N| > 2^{(1-3\eps/4)n}}} \hspace{-7mm}  g^{+} \Big( 2^{n+1} \big( \{ a_M x\} - t_{n,j_1} \big) \Big)  
            g^{+} \Big( 2^{n+1} \big( \{ q_N x\} - t_{n,j_2} \big) \Big) dx - 1 \\ = \frac{1}{\bE[  C_n | V]^2 \cdot \lambda(V)} \int_{V}  \mathop{\mathop{\sum\sum}_{I_{n,j_1}, I_{n,j_2} \in \cI_n}}_{\substack{  M,N \in \Delta_n : |M-N| > 2^{(1-3\eps/4)n}}} \hspace{-2mm} 
            \frac{1}{2^{2(n+1)}} \sum_{h_1, h_2 \in \bZ}  \widehat{g}^{+} \Big( \frac{h_1}{2^{n+1}} \Big) \widehat{g}^{+} \Big( \frac{h_2}{2^{n+1}} \Big) \cdot \\ \vspace{-2mm} \cdot e\Big( h_1 \big( a_M x - t_{n, j_1} \big) + h_2 \big( q_N x - t_{n, j_2} \big) \Big) dx - 1.
        \end{multline*} Note that the contribution of the zero frequencies $h_1 = h_2 = 0$ can be bounded from above by
        \[
            \le \frac{1}{\bE[ C_n | V]^2} \# \cI_n^2 (N_n - N_{n-1})^2 \frac{1}{2^{2(n+1)}} \widehat{g}^{+} (0)^2 \le 1 + O \big( 2^{-90n} \big)
        \] following~\eqref{expectation_asymptotics}. Next, if $\max\{|h_1|,|h_2|\} > 2^{2n}$ for either $i = 1$ or $i = 2$, we can bound the full expression by $O(2^{-90n})$ from~\eqref{tail_decay}. Thus, it remains to evaluate
        \begin{multline} \label{expression_to_evaluate}
            \frac{1}{\bE[  C_n | V]^2 \cdot \lambda(V)} \int_{V} \sum_{I_{n,j_1}, I_{n,j_2} \in \cI_n} \sum_{\substack{  M,N \in \Delta_n \\ |M-N|>2^{(1-3\eps/4)n}}}   
            \frac{1}{2^{2(n+1)}} \sum_{\substack{|h_1|, |h_2| \le 2^{2n} \\ |h_1| + |h_2| \neq 0}} \widehat{g}^{+} \Big( \frac{h_1}{2^{n+1}} \Big) \widehat{g}^{+} \Big( \frac{h_2}{2^{n+1}} \Big) \cdot \\ \cdot e\Big( h_1 \big( a_M x - t_{n, j_1} \big) + h_2 \big( q_N x - t_{n, j_2} \big) \Big) dx.
        \end{multline}

        Note that, by the condition $|M-N| > 2^{(1-3\eps/4)n}$, we have (assuming without loss of generality that $N>M$)
        \[
            \frac{q_N}{q_M} \gg \Big( 1 + \frac{1}{N^{1-\eps}} \Big)^{2^{(1-3\eps/4)n}}
            \gg \exp\big( 2^{\frac{\eps n}{5}} \big)
            \gg 2^{100n}.
        \] On the other hand, if $h_2 \neq 0$, then $\max \Big|\frac{h_1}{h_2}\Big| \ll 2^{2n}$. This clearly implies that, for any pair $(h_1,h_2)$, we have
        \[
            |h_1 q_M + h_2 q_N| \gg \min(q_M,q_N) \gg q_{N_{n-1}+1}.
        \] Then, similarly to~\eqref{integral_for_exp}, we find
        \[
            \frac{1}{\lambda(V)} \int_{V} e\big( h_1 q_M x + h_2 q_N x \big) dx \ll q_{N_{n-1}} 2^{2n} \cdot \frac{1}{q_{N_{n-1}+1}}.
        \] Consequently, the expression~\eqref{expression_to_evaluate} does not exceed
        \[
            \frac{q_{N_{n-1}} 2^{2n}}{\bE[  C_n | V]^2} \cdot \# \cI_n^2 N_n^2 \frac{2^{4n}}{2^{2n}} \frac{1}{q_{N_{n-1}+1}} \ll 2^{-90n}.
        \] Thus,
        \[
            \lambda\Big(\big|  C_n - \bE[  C_n | V] \big| > \bE[  C_n | V] \ \Big| \ V \Big) \le 2^{-\frac{\eps n}{4}}. 
        \] We finally conclude that
        \[
           \prod_{n \ge n_0}  \lambda\Big( \#\cI_n \gs 2^{(1-\frac{\varepsilon}{2})n} \Big) \ge \prod_{n \ge n_0} \Big( 1 - 2^{-\frac{\eps n}{4}} \Big) > 0,
        \] and since the above product can be made arbitrarily close to $1$ by choosing $n_0$ sufficiently large, Theorem~\ref{thm2} follows.

    %SECTION 5: Fractals
    
    %%%%%%%%%%%%%%%%%%%%%%%%%%%%%%%%%%%
    
    \section{Intersection with fractal sets} \label{sec_fractal}
    
     In this section, we prove Theorem~\ref{thm4}: if $G$ is as in the hypothesis of Theorem \ref{thm4} and $E:=E((q_n),x,\nu)$ is the random set defined in \eqref{def_E_set}, we show that under the assumptions in each of  the cases $(1)$ and $(2),$ 
    \[
    \dimH (G \cap E) = \frac{1}{\nu} + \dimH (G) - 1
    \]
    holds for almost all $x \in [0,1)$.  We first give the proof for the lower bound, which applies to all \textit{analytic} sets $G$ (which include compact sets and, in particular, Ahlfors regular sets as special cases) and will be the main part of our work. The   corresponding upper bound  follows directly from the work of Bugeaud and Durand~\cite{BD} and applies to \textit{any} real-valued sequence $(q_n)_{n \in \bN}$, but it requires the set $G$ to be Ahlfors regular (the definition will be given below).   We conclude the section with a proof of the remaining parts of Corollary~\ref{cor2}.  \\

    \subsection*{Lower bound. Baire category argument} 
        For the proof of the lower bound in Theorem \ref{thm4}, we make use of the following Baire category argument from~\cite{KPX}: 

    \newcommand{\KPXBaire}{\cite[Lemma~3.4]{KPX}}
    
    \begin{lemma} [Khoshnevisan, Peres, Xiao \KPXBaire] \label{34}
    Let $\mathbb{P}$ be a Borel probability measure and let $E = E(x) \subseteq [0,1]$ be a random Borel measurable set with the following property: for every compact set $F$ with $\dimH(F) > s$, one has that 
    that \begin{equation}\label{nonempty_inter}
        \mathbb{P}[E(x) \cap F \neq \emptyset] = 1.
    \end{equation} Then for every analytic set $G$ we get
    \[
        \mathbb{P}[\dimH(E(x) \cap G) \geq \dimH(G) - s] = 1.
    \]
    \end{lemma}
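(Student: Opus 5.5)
We argue by contradiction, combining the hypothesis \eqref{nonempty_inter} with an auxiliary random set of prescribed codimension that is independent of $x$. We may assume $\dimH(G) > s$, since otherwise there is nothing to prove. Suppose the conclusion fails. Then there is a rational $\gamma$ with $0<\gamma<\dimH(G)-s$ such that the event
\[
    \Omega_\gamma := \big\{x : \dimH(E(x)\cap G) < \dimH(G)-s-\gamma'\big\}
\]
has positive $\mathbb{P}$-probability for some rational $\gamma'>0$. Both parameters range over countable sets, so the positivity can be localized in this way. Set $\theta := \dimH(G)-s-\gamma'$, so that on $\Omega_\gamma$ we have $\dimH(E\cap G)<\theta$.

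The key auxiliary object is an independent random closed set $\Gamma$ of \emph{codimension} $\theta$. Concretely, take fractal (Mandelbrot) percolation on the dyadic tree of Section~\ref{sec_binary}, retaining each child independently with probability $2^{-\theta}$. We use two classical facts about $\Gamma$ due to Hawkes and Lyons, which we quote rather than reprove.
\begin{enumerate}
    \item[(a)] For any Borel set $A$ with $\dimH(A)<\theta$, one has $A\cap\Gamma=\emptyset$ almost surely.
    \item[(b)] For any compact $K$ with $\dimH(K)>\theta$, with positive probability $\dimH(K\cap\Gamma)\geq \dimH(K)-\theta-\eta$ for any fixed $\eta>0$.
\end{enumerate}
Since $G$ is analytic, Davies' theorem provides a compact $K\subseteq G$ with $\dimH(K)>\dimH(G)-\gamma'/2$. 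Applying (b) to $K$ with $\eta=\gamma'/4$, we obtain an event $\Theta$ of positive probability (in the randomness of $\Gamma$) on which $K\cap\Gamma$ is a compact set with $\dimH(K\cap\Gamma)>s$.

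Now work on the product space of $x$ and $\Gamma$, with product measure. First, by (a) applied for each fixed $x\in\Omega_\gamma$ to $A=E(x)\cap G$, together with Fubini, we get $E(x)\cap G\cap\Gamma=\emptyset$ for almost every pair $(x,\Gamma)$ with $x\in\Omega_\gamma$. Second, for every fixed realization $\Gamma\in\Theta$, the set $F:=K\cap\Gamma$ is compact with $\dimH F>s$. The hypothesis \eqref{nonempty_inter} then gives $\mathbb{P}[E(x)\cap F\neq\emptyset]=1$, so $E(x)\cap G\cap\Gamma\neq\emptyset$ for $\mathbb{P}$-a.e.\ $x$. Integrating over $\Theta$, this holds for almost every pair in $\{x \text{ arbitrary}\}\times\Theta$. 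By independence, the set $\Omega_\gamma\times\Theta$ has measure $\mathbb{P}(\Omega_\gamma)\cdot\mathbb{P}(\Theta)>0$, and the two conclusions contradict each other on it. Hence $\mathbb{P}[\dimH(E\cap G)\geq\dimH(G)-s]=1$.

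The main obstacle is the measure-theoretic bookkeeping rather than any estimate. One needs joint measurability of $\{(x,\Gamma): E(x)\cap G\cap\Gamma\neq\emptyset\}$ so that Fubini applies. We handle this through the Borel measurability of $E$ and the fact that $\Gamma$ is a random closed set, projecting via analyticity if necessary. One also needs the percolation facts (a) and (b) for sets that are merely Borel or analytic. For (b) we reduce to compact sets via Davies' theorem; for (a) we use a first-moment covering argument, which works for arbitrary sets with small Hausdorff dimension.
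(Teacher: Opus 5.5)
Your argument is correct. It is the standard codimension argument with an independent fractal-percolation set: the Hawkes--Lyons facts (a) and (b), plus Fubini on the product space. This is how the cited lemma of Khoshnevisan, Peres and Xiao is proved; the paper itself gives no proof beyond that citation.

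The only blemishes are cosmetic. The event you call $\Omega_\gamma$ actually depends on $\gamma'$, and it is $\gamma'$ that must satisfy $0<\gamma'<\dimH(G)-s$, so that $\theta\in(0,1)$. You also never need $\Omega_\gamma$ to be measurable, since the same Fubini argument shows it is contained in a $\mathbb{P}$-null set.
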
 

    Li, Shieh, and Xiao~\cite{lixi} used this result to determine the hitting probability $\bP (E \cap G \neq \emptyset)$ and the packing dimension of $E \cap G$ for the Dvoretzky random set $E$ with i.i.d.\ centers and an analytic set $G$. This argument was also crucial in the work of Bugeaud and Durand: in~\cite[Lemma~3.4]{BD} they extended the argument of Lemma~\ref{34} to Hausdorff measures and general gauge functions when $G$ is compact, and the same result would play over to our setup. Precisely, the proof of Theorem~\ref{thm4} is a combination of Lemma~\ref{34} with the second moment method.\\

    Armed with Lemma~\ref{34}, in order to prove that the lower bound $ \dimH (G \cap E) \gs \frac{1}{\nu} + \dimH (G) - 1$ holds almost surely, it simply suffices to show that for any compact $F \subseteq [0,1]$ with $\dimH(F)>1 - \frac{1}{\nu},$ we have $E \cap F \neq \emptyset$ almost surely. So, let $\sig := 1 - \frac{1}{\nu}$ and  $\varepsilon > 0$, and fix a compact set $F \subseteq [0,1]$ with $\dimH (F) \geq \sig   + \eps$. \par 
   We now describe in more details how the colored tree framework comes into the proof.  We consider the colored tree corresponding to the sequence of points $\{q_Nx\}$ and the sequence $(N_n)_{n\in\bN}$ defined by \begin{equation} \label{above_range}
        N_n =\lfloor L 2^0 \rfloor + \lfloor L 2^{\frac{1}{\nu}} \rfloor + \ldots + \lfloor L 2^{\frac{n}{\nu}} \rfloor,
    \end{equation} where $L>0$ will be some appropriately chosen parameter. This means that at level $n$ of the tree we place the next $\lfloor L 2^{\frac{n}{\nu}} \rfloor$ new points of the sequence, and a vertex at level $n$ is colored if it contains one of these points $\{q_Nx\}$   with $N_{n-1} < N \ls N_n.$ Observe the difference in the definition of $\NN$ compared to the previous sections, as now $N_n$ depends on $\nu.$ The reason for this choice of $(N_n)_{n\in\bN}$  is the following: choosing $L \le \frac{1}{2026}\,\nu^{-\nu}$ to ensure that  $\frac{1}{2^n} \le \frac{1}{10 N^{\nu}}$ for all $N_{n-1} < N \ls N_n,$  every $\delta$ corresponding to a path with infinitely many colored vertices will necessarily belong to the set $E$ defined in \eqref{def_E_set}. \par  In what follows, an interval/vertex $I_{n,k}$ at level $n$ of the tree will be called: \newline (i) an \textit{$F$-interval} or \textit{$F$-vertex} if it contains some point in $F,$ \newline (ii)  an \textit{$E$-interval} or  \textit{$E$-vertex} if it contains one of the points $\{q_Nx\}$ with $N_{n-1} < N \ls N_n$ (equivalently if it is colored according to the definition in Section \ref{sec_binary}), and \newline (iii) an \textit{$EF$-interval} or \textit{$EF$-vertice} if it is both $E$- and $F$-interval/vertex. \par   Write $E_n(x)$ for the union of all $E$-intervals at level $n$. Then clearly the set $E$ from the statement of Theorem~\ref{thm4} contains the set
    \begin{equation} \label{smaller_subset}
     E_\infty := \limsup_{n \to \infty}  E_n (x).
    \end{equation}
   In order to show that almost surely\ $E \cap F \neq \emptyset$, it suffices to prove that almost surely $E_\infty \cap F \neq \emptyset$, and in turn, to prove this last statement it suffices to prove that almost surely there exists an infinite path of $F$-vertices that contains $E$-vertices infinitely often (in other words, there is a path containing infinitely many \textit{$EF$-vertices}). Lemma~\ref{34} would then imply that
    \[
    \dimH (E \cap G) \ge \dimH (  E_\infty \cap G) \ge \dimH (G) - \sig - \varepsilon
    \]
    almost surely. Letting $\eps \to 0$, we obtain the desired result.

    \subsection*{Lower bound. Moment computation}
   By Frostman's Lemma \cite[Theorem~8.8]{mattila} there exists a probability measure $\chi$ supported on $F$ and a constant $C>0$ such that   \[ \chi(B(x, r)) \le C r^{s} \qquad \text{ for all } x \in \bR \slash \bZ, \ \ r > 0   \] where $B(x,r)$ denotes the arc of length $2r$ centered at $x.$ Write $\tilde{F} \subseteq F$ for the support of $\chi.$ \par We aim to show that for any vertex $I_0$ such that $I_0 \cap \tilde{F}\neq \emptyset$ we have  \[ E\cap F \cap I_0 \neq \emptyset  \qquad \text{ almost surely.} \] This will imply that almost surely, all $F$-vertices $I_0$ with $I_0 \cap \tilde{F}\neq \emptyset$ have an $EF$-descendant, and thus almost surely there exists an infinite path with infinitely many $EF$-vertices, which gives $E_\infty \cap F \neq \emptyset.$  \par So, let $I_0$ be a vertex with $I_0\cap \tilde{F}\neq \emptyset,$ located at level $n_0$ of the tree.  For $n \gs n_0$ large enough,  set 
    \begin{equation*}
        K_n(F) = \{ 0 \le k < 2^n : I_{n,k} \subseteq I_0 \text{ and }  I_{n,k} \cap F \neq \emptyset \}.
    \end{equation*}
    Since $\chi(I_{n,k}) \le C 2^{-s n}$ for any binary interval $I_{n,k}$,
    \[
        1 \le \sum_{k \in K_n (F)} \chi(I_{n,k}) \le \# K_n (F) \cdot C 2^{-s n}, \qquad \text{which implies} \qquad \# K_n (F) \ge \frac{1}{C} 2^{s n}.
    \] Furthermore, we may assume that $\# K_n(F) \asymp 2^{s n}$, since restricting $F$ to a subset $F' \subset F$ with $\# K_n(F') \asymp 2^{s n}$ can only reduce the probability that $E \cap F \cap I_0 \neq \emptyset.$ 
  
    Next, at level $n$ we will estimate the probability that unusually many/few intervals $I_{n,k}$ with $k \in K_n (F)$ are colored by the points $\{ q_N x\}$. To this end, we again apply the Chebyshev inequality after bounding the second centered moment of the number of points  within 
    $\bigcup_{k \in K_n(F)} I_{n,k}$.   The proof will be split into two main cases, regarding whether the sequence $(q_N)_{N \in \bN}$ is a sequence of integers or an arbitrary real-valued sequence.\\

    \textit{Case 1: Integer-valued $(q_N)_{N \in \bN}$.} 
    We restrict to a subsequence $q_{n_k}$ indexed by the set $\bI$ satisfying the conditions of Theorem~\ref{thm4}. For convenience, we keep using the notation $q_N$ for this subsequence.
    % The estimate for the second moment then reduces to a gcd sum of the type appearing in the statement of Theorem~\ref{thm4}. 
    The number of points of the subsequence arriving at   level $n$ of the tree is equal to $\tilde N_n := \# (\bI \cap [N_n])$.     By the assumptions on the set $\bI$, there exist  infinitely many $n$ such that the number of points of the subsequence placed on the $n$-th level of the tree  satisfies \[ \tilde N_n - \tilde N_{n-1} \ge \kappa \frac{N_n}{f(N_n)}\] for some $\kappa > 0$. Indeed,  otherwise we would get
    \[
        \frac{N_{2M}}{f(N_{2M})} \ll \sum_{M < n \le 2M} \big( \tilde N_n - \tilde N_{n-1} \big) = o\Big( \frac{N_{2M}}{f(N_{2M})} \Big) \qquad \text{as } M \to \infty,  
    \] which is a contradiction. On the other hand, we may also assume that $\tilde N_n-\tilde N_{n-1}\le 2\tilde N_{n-1}$ by discarding extra points (the probability of finding an $EF$-interval can only decrease under such a restriction).\\
    
    Let $g^{-}(x)$ be the function from Lemma~\ref{smoothing}. Then the (weighted) number of points $\{q_N x\}$ in $K_n(F)$ is bounded from below by
    \[
    C_n := C_n (x) = \sum_{k \in K_n (F)} \sum_{\tilde N_{n-1} < N \le \tilde N_n} g^{-} \Big( 2^{n+1} \big( \{ q_N x\} - t_{k,n} \big) \Big) dx, 
    \] where $t_{k,n}$ is the center of $I_{n,k}$. By Poisson summation,
    \[
        C_n = \sum_{k \in K_n (F)} \frac{1}{2^{n+1}} \mathop{  \mathop{\sum\sum}_{ \tilde N_{n-1} < N \le \tilde N_n  } }_{h \in \bZ} \widehat{g}^{-} \Big( \frac{h}{2^{n+1}} \Big) e\big( h q_N x - h t_{k,n} \big).
    \] Then the expectation of $C_n$ satisfies
    \begin{equation} \label{expectation_C_n}
        \bE[C_n] = \# K_n (F) \frac{\widehat{g}^{-}(0) (\tilde N_n - \tilde N_{n-1})}{2^{n+1}}.
    \end{equation}

    The ``bad event'' now corresponds to the absence of $EF$-intervals at level $n$ after coloring. Using the second-moment method, we estimate the probability of the more likely event (note that we only need to show $C_n(x) > 0$)
    \[
    A_n := \Big\{ x \in [0, 1): \ |C_n - \bE[C_n]| > \frac{1}{100} \# K_n (F) \tilde N_n \cdot \frac{1}{2^n} \Big\}.
    \] 

    Next, by the Chebyshev inequality,
    \[
        \lambda (A_n) \ll
        \frac{2^{2n}}{(\# K_n (F) \tilde N_n)^2} \int_0^1 \bigg( \mathop{\mathop{\sum\sum}_{k \in K_n (F)}}_{\tilde N_{n-1} < N \le \tilde N_n} g^{-} \Big( 2^{n+1} \big( \{ q_N x \} - t_{k,n} \big) \Big) - \bE[C_n] \bigg)^2 dx.
    \] Opening the square we get
    \begin{multline} \label{opening_square}
    \lambda (A_n) \ll \frac{2^{2n}}{(\# K_n (F) \tilde N_n)^2}\hspace{-3mm} \mathop{\mathop{\sum\sum}_{k_1, k_2 \in K_n (F)}}_{\tilde N_{n-1} < M, N \le \tilde N_n}\hspace{-2mm} \int_0^1 g^{-} \Big( 2^{n+1} \big( \{ q_M x \} - t_{k_1, n} \big)\!\Big)  
    g^{-} \Big( 2^{n+1} \big(  \{ q_N x \} - t_{k_2,n} \big)\!\Big) dx \\  - \frac{2^{2n}}{(\# K_n (F) \tilde N_n)^2} \bE[C_n]^2 =: \sum_{\tilde N_{n-1} < M, N \le \tilde N_n} C(M, N),
    \end{multline} where
    \begin{multline*}
        C(M, N) = \frac{2^{2n}}{(\# K_n (F) \tilde N_n)^2} \sum_{k_1, k_2 \in K_n (F)} \int_0^1 g^{-} \Big( 2^{n+1} \big( \{ q_M x \} - t_{k_1, n} \big) \!\Big)    
    g^{-} \Big( 2^{n+1} \big(  \{ q_N x \} - t_{k_2,n} \big)\! \Big) dx \\ - \frac{2^{2n}}{(\# K_n (F) \tilde N_n)^2} \frac{\bE[C_n]^2}{(\tilde N_n - \tilde N_{n-1})^2}.
    \end{multline*}
  Next, we evaluate $C(M, N)$ in two different ways: \\

    \textit{First approach. } Here we apply the same trick used in Section~\ref{sec_lower}:
    \[
        \sum_{k_2 \in K_n (F)} g^{-} \Big( 2^{n+1} \big( \{ q_N x\} - t_{k_2, n} \big) \Big) \le 1.
    \] 
   
    This gives  
    \begin{equation} \label{first_bound}
    \begin{split} C(M, N) &\ll \frac{2^{2n}}{(\# K_n (F) \tilde N_n)^2} \sum_{k_1 \in K_n (F)} \int_0^1 g^{-} \Big( 2^{n+1} \big( \{ q_M x\} - t_{k_1, n} \big) \Big) dx \\
    &\ll \frac{2^n}{\# K_n (F) \tilde N_n^2} \ll \tilde N_n^{-\frac{\nu \eps}{2} - 1}, 
    \end{split}\end{equation} where we have used $\#K_n(F) \asymp 2^{s n} = 2^{n - n/\nu + \eps n}$, $2^n \asymp N_n^{\nu} \asymp \tilde N_n^{\nu} f(\tilde N_n)^{\nu} \ll \tilde N_n^{\nu + \nu \eps / 2}$ and simply omitted the contribution of the term with $\bE[C_n]^2$ since it is negative. \\

    \textit{Second approach. } Here we apply Poisson summation:
    \begin{multline} \label{before_split}
    C(M, N) = 
        \frac{2^{2n}}{(\# K_n (F) \tilde N_n)^2} \sum_{k_1, k_2 \in K_n (F)} \frac{1}{2^{2(n+1)}} \sum_{h_1, h_2 \in \bZ} \widehat{g}^{-} \Big( \frac{h_1}{2^{n+1}} \Big) \widehat{g}^{-} \Big( \frac{h_2}{2^{n+1}} \Big)\cdot \\
        \cdot\int_0^1 e\Big( h_1 (q_M x - t_{k_1, n}) - h_2 (q_N x - t_{k_2,n}) \Big) dx - \frac{2^{2n}}{(\# K_n (F) \tilde N_n)^2} \frac{\bE[C_n]^2}{(\tilde N_n - \tilde N_{n-1})^2}. 
    \end{multline} 
The contribution of term $h_1 = h_2 = 0$ is
    \[
        \frac{2^{2n}}{(\# K_n (F) \tilde N_n)^2} \sum_{k_1, k_2 \in K_n (F)} \frac{1}{2^{2(n+1)}} \big| \widehat{g}^{-}(0) \big|^2,
    \] which cancels out with the negative term containing $\bE[C_n]^2$. Using the decay rate of $\widehat{g}^{-}(x)$ we then get
    \begin{multline*}
        C(M, N) \ll \frac{2^{2n}}{(\# K_n (F) \tilde N_n)^2} \sum_{k_1, k_2 \in K_n (F)} \frac{1}{2^{2(n+1)}} \sum_{\substack{|h_1|, |h_2| \le 2^n n^{100} \\ |h_1|+|h_2| \neq 0}} \Big| \widehat{g}^{-} \Big( \frac{h_1}{2^{n+1}} \Big) \Big| \cdot \Big| \widehat{g}^{-} \Big( \frac{h_2}{2^{n+1}} \Big) \Big| \cdot \\
        \bigg| \int_0^1 e\big( h_1 q_M x - h_2 q_N x \big) dx \bigg| + O \Big( \frac{2^{2n}}{q_{\tilde N_{n-1}}} + \frac{1}{2^{100n}} \Big).
    \end{multline*}
    Next, by  orthogonality we find 
    \[
        C(M, N) \ll \frac{1}{\tilde N_n^2} \sum_{\substack{|h_1|, |h_2| \le 2^n n^{100} \\ |h_1|+|h_2| \neq 0}} c(h_1) c(h_2) \Id \Big( h_1 q_M = h_2 q_N \Big) + O \Big( \frac{2^{2n}}{q_{\tilde N_{n-1}}} + \frac{1}{2^{100n}} \Big),
    \] where
    \[
        c(h) := \min\Big( \frac{2^{n+1}}{|h|}, 1 \Big).
    \] We split the main term into three sums
    \[
        C(M, N) \ll T_1 + T_2 + T_3 + O \Big( \frac{2^{2n}}{q_{\tilde N_{n-1}}} + \frac{1}{2^{100n}} \Big),
    \] where $T_1$ is the sum over the range $\max(|h_1|, |h_2|) \le 2^{n+1}$, $T_2$ corresponds to the range $\min(|h_1|, |h_2|) \le 2^{n+1} < \max(|h_1|, |h_2|)$, and $T_3$ corresponds to the range $2^{n+1} < \min(|h_1|, |h_2|)$. Without loss of generality, assume that $M > N$; consequently, $|h_2| > |h_1|$. We have
    \[
        h_1 q_M = h_2 q_N \Longleftrightarrow \frac{h_1}{h_2} = \frac{q_N}{q_M} = \frac{q_N / \gcd(q_M, q_N)}{q_M / \gcd(q_M, q_N)}.
    \] Let $|h_1| =: \ell \frac{q_N}{\gcd(q_M, q_N)}$ and $|h_2| = \ell \frac{q_M}{\gcd(q_M, q_N)}$. In the range corresponding to $T_1$, we then get
    \[
        1 \le \ell \le \frac{2^{n+1} \gcd(q_M, q_N)}{q_M}.
    \] Since $c(h_1) = c(h_2) = 1$ in this range, we get
    \[
        T_1 \le \frac{1}{\tilde N_n^2} \sum_{\ell \le 2^{n+1} \gcd(q_M, q_N)/q_M} 1 \ll \frac{1}{\tilde N_n^2} \frac{2^n \gcd(q_M, q_N)}{\max(q_M, q_N)}.
    \] 
    
    Similarly, for $T_2$ we have $c(h_1) = 1$ and $c(h_2) = \frac{2^{n+1}}{|h_2|}$. Then, this implies (recall $|h_2| \leq 2^n n^{100}$),
    \[
        \ell \in J_{M,N} := \bigg[\frac{2^{n+1} \gcd(q_M, q_N)}{q_M}, \min \Big( \frac{2^{n+1} \gcd(q_M, q_N)}{q_N}, \frac{n^{100} 2^n \gcd(q_M, q_N)}{q_M} \Big) \bigg],  
    \] and thus
    \[
        T_2 \ll \frac{1}{\tilde N_n^2} \sum_{\ell \in J_{M, N}} \frac{2^{n+1} \gcd(q_M, q_N)}{\ell q_M} \ll
        \frac{2^n}{\tilde N_n^2} \frac{\gcd(q_M, q_N)}{\max(q_M, q_N)} \min \Big( \log \frac{\max(q_M, q_N)}{\min(q_M, q_N)}, \log n \Big), 
    \] which is of the desired shape since $\log n \asymp \log \log \tilde N_n$. 
    
    Next, for $T_3$ we get that $|h_1| = \ell \frac{q_N}{\gcd(q_M, q_N)}$ and $|h_2| = \ell \frac{q_M}{\gcd(q_M, q_N)}$ implies that 
       \[\ell \in 
        \tilde J_{M,N} := \bigg[ \frac{2^{n+1} \gcd(q_M, q_N)}{q_N}, \frac{2^n n^{100} \gcd(q_M, q_N)}{q_M} \bigg].
    \] 
    Consequently,
    \[
        T_3 \ll \frac{1}{\tilde N_n^2} \sum_{\ell \in \tilde J_{M,N}} \frac{2^{2(n+1)} \gcd(q_M, q_N)^2}{\ell^2 q_M q_N} \ll \frac{2^n}{\tilde N_n^2} \frac{\gcd(q_M, q_N)}{\max(q_M, q_N)}.
    \]

    \vspace{2ex}
    
    Finally, combining the bounds for $T_1$, $T_2$, and $T_3$, with~\eqref{first_bound}, we find that
    \[
        C(M, N) \ll \frac{2^n}{\tilde N_n^2} \min \bigg[ \frac{\gcd(q_M, q_N)}{\max(q_M, q_N)} \min \Big( \log \frac{\max(q_M, q_N)}{\min(q_M, q_N)}, \log \log \tilde N_n \Big), \ \frac{\tilde N_n^{1-\eps \nu / 2}}{2^n} \bigg].
    \] We can apply the bound $2^n \gg \tilde N_n^{\nu}$ in the denominator of the last fraction. Using~\eqref{opening_square} and the assumptions of Theorem~\ref{thm4} we then conclude that
    \[
        \lambda(A_n) \ll \frac{2^n \tilde N_n^{2-\nu}}{\tilde N_n^2 \psi(\tilde N_n) f^{\nu} (\tilde N_n)} \ll \frac{1}{\psi(\tilde N_n)}
    \] for an arbitrarily slowly growing function $\psi(n)$. Recall that the sequence of levels $n \in \{ n_i : i \ge 1 \}$ can be chosen arbitrarily sparse. In this way, we can always guarantee that
    \[
        \sum_{i=1}^{\infty} \lambda(A_{n_i}) \le \sum_{i=1}^{\infty} \frac{1}{\psi(N_{n_i})} < \infty.
    \] The desired result then follows by the first Borel-Cantelli lemma and Lemma~\ref{34}. \\

    \textit{Case 2: Real-valued almost-lacunary $(q_N)_{N \in \bN}$.} In this case we do not restrict to any subsequence, and the proof goes similarly until~\eqref{before_split}, with $N_n$ in place of $\tilde N_n$. Next, since we do not have orthogonality in this case, we evaluate the last integral in~\eqref{before_split} directly.   We split the sum over $M,N$ into two parts. For the first part, we impose the restriction $|M-N| \le N_n^{\nu \eps} \cdot (\psi(n))^{-1}$. We then apply the first bound~\eqref{first_bound}:
    \[
        \sum_{\substack{N_{n-1} < M,N \le N_n \\ |M-N| \le N_n^{\nu \eps} (\psi(n))^{-1}}} C(M, N) \ll \sum_{\substack{N_{n-1} < M,N \le N_n \\ |M-N| \le N_n^{\nu \eps} (\psi(n))^{-1}}} N_n^{-\nu \eps - 1} \ll \frac{1}{\psi(n)}.    
    \] For the remaining sum, with $|M-N| > N_n^{\nu \eps} (\psi(n))^{-1}$, we show that the quantity $h_1 q_M - h_2 q_N$ cannot be small in absolute value. Indeed, let
    \[
        \Delta_n := \floor{\frac{2^{\eps n}}{\psi(n)}}
    \] and assume again that $q_M > q_N$ and $|h_2| > |h_1|$. By the growth-rate assumption on $(q_N)_{N\in \bN}$, we have for sufficiently large $n$
    \begin{align*}
        \frac{q_M}{q_N} \ge \frac{q_{N+\Delta_n}}{q_N} & \ge \exp\Big(\Delta_n \log (1 + \tfrac{1}{\Phi(n)})\Big)  \\
       & \ge \exp \Big( \frac{1}{2} \cdot \frac{2^{\eps n}}{\psi(n) \Phi(n)} \Big) \ge \exp(n \log 2 + 101 \log n) \ge 2^{n+1} n^{100}, 
    \end{align*} since $\Phi(n) \ll 2^{o(1) \cdot n}$. Then we conclude that $|h_1 q_M - h_2 q_N| > q_N$. 
    
    Hence,
    \[
        \int_0^1 e \Big( h_1 q_M x - h_2 q_N x \Big) dx \ll \frac{1}{q_N}, 
    \] which gives
    \[
        \lambda(A_n) \ll \frac{1}{\psi(n)} + \frac{2^{2n} n^{200}}{q_{N_{n-1}}} + \frac{1}{2^{100 n}} \ll \frac{1}{\psi(n)}.
    \] Then we again obtain the desired result by choosing a sufficiently sparse sequence of levels $n $  and applying the first Borel-Cantelli lemma together with Lemma~\ref{34}. \\

    \subsection*{Upper bound}
    
    Here, we prove the upper bound in Theorem \ref{thm4}, that is, we show that for $G$ Ahlfors regular with dimension $s \in (0,1]$
    and any sequence of real numbers $(q_n)_{n \in \mathbb{N}}$ we have almost surely,
    \begin{equation}\label{intersec_dim_upper} \dimH\bigl(E((q_n),x,\nu)\cap G\bigr)
    \leq \frac{1}{\nu}+\dimH(G)-1,\end{equation}
    provided that the right-hand-side above is non-negative.
    Furthermore, we will show that if $\frac{1}{\nu}+\dimH(G)-1 < 0$, then $E((q_n),x,\nu)\cap G  = \emptyset$ almost surely.\\
    
    The proof  follows directly from the work of Bugeaud and Durand~\cite{BD}, but we provide some details for completeness. Let us recall the definition of Ahlfors regular sets, following~\cite[Definition~2.1]{BD}:
    \begin{defn}
    A compact set $G \subseteq \mathbb{T}$ is called Ahlfors regular with dimension $s \in (0,1]$ if there exists a real number $c > 0$ such that for all $x \in G$ and $r > 0$ one has
    \[
        \frac{r^{s}}{c} \leq \mathcal{H}^{s}(G \cap B(x,r)) \leq cr^{s},
    \] where $B(x,r)$ is the open arc centered at $x$ with length $2r$ and $\mathcal{H}^{s}$ is the Hausdorff measure. 
    \end{defn} We remark that the middle-third Cantor set is Ahlfors regular with dimension $\kappa = \frac{\log 2}{\log 3}$, and that $\mathbb{T}$ is Ahlfors regular with dimension $1$; we refer to \cite{ahlfors_ex} for further details and examples. We also note that, for Ahlfors regular sets $G$, the Hausdorff, box-counting, and packing dimensions measures coincide up to a constant (see~\cite{falconer}).\\
    
    We make use of the result of~\cite[Theorem~3.1]{BD} rephrased below: \\
    
    \begin{lemma}~\cite[Theorem~3.1]{BD}\label{packing_dichotomy}
    Given $\mathbf{r} = (r_n)_{n \in \mathbb{N}}$
    and a sequence of (not necessarily independent) random variables $X_n$, defined on a probability space $(\Omega, \mathcal{A},\mathbb{P}),$ that are all uniformly distributed in $[0,1)$, we define the random set
    \[E(\mathbf{r},\omega) = \{\gamma \in [0,1): \lVert X_n(\omega) - \gamma\rVert \leq r_n \text{ for inf. many } n\gs 1\}.\]
    
    \begin{enumerate}
    \item If $g$ and $h$ are doubling gauge functions, with $\mathcal{P}^g(G) < \infty$ (where $\mathcal{P}$ denotes the packing premeasure)
    and $\sum_{n \in \mathbb{N}} \frac{h(r_n)r_n}{g(r_n)} < \infty$, then
    \begin{equation}\mathbb{P}[\mathcal{H}^h(E(\mathbf{r},\omega) \cap G) = 0] = 1.\end{equation}
    \item If $g$ is a doubling gauge function with 
    $\sum_{n \in \mathbb{N}} \frac{r_n}{g(r_n)} < \infty$, then
    \begin{equation}\mathbb{P}[E(\mathbf{r},\omega) \cap G = \emptyset] = 1.\end{equation}
    \end{enumerate}
    \end{lemma}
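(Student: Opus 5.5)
The plan is a first-moment (Borel--Cantelli) argument. It uses only that each $X_n$ is uniformly distributed; no independence between different $n$ is needed. The geometric input is that a finite packing premeasure $\mathcal{P}^g(G)$ controls how many arcs of length $r$ are needed to cover $G$. Since $E(\mathbf{r},\omega)=\limsup_n B(X_n,r_n)$, only the arcs that actually meet $G$ matter. The event $B(X_n,r_n)\cap G\neq\emptyset$ equals $\{X_n\in G_{(r_n)}\}$, where $G_{(r)}$ is the closed $r$-neighbourhood of $G$ in $\mathbb{T}$. By uniformity,
\[
\mathbb{P}\big[B(X_n,r_n)\cap G\neq\emptyset\big]=\lambda\big(G_{(r_n)}\big).
\]

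The first step, which I expect to be the main technical point, is the covering estimate
\[
\lambda\big(G_{(r)}\big)\ll \frac{r}{g(r)}\qquad (0<r\le r_0).
\]
To prove it, take a maximal family of points $y_1,\dots,y_M\in G$ with pairwise distances $\ge 2r$. The disjoint balls $B(y_i,r)$ centred in $G$ form an admissible packing, so by the definition of the packing premeasure at scales $\le 2r$ we get $M\,g(r)\ll \mathcal{P}^g(G)+1<\infty$ once $r$ is small. By maximality, $G\subseteq\bigcup_i B(y_i,2r)$, hence $G_{(r)}\subseteq\bigcup_i B(y_i,3r)$. Therefore $\lambda(G_{(r)})\le 6rM\ll r/g(r)$. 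The doubling property of $g$ absorbs the constant changes of radius ($r$ versus $2r$ and $3r$). I would also dispose of the case $r_n\not\to0$. If $g(r_n)$ is bounded below along a subsequence, then $\sum_n r_n/g(r_n)<\infty$ forces $r_n\to0$ anyway. Otherwise one truncates and works only with $n$ large.

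Part (2) follows immediately. Summability gives $\sum_n \mathbb{P}[X_n\in G_{(r_n)}]\ll\sum_n r_n/g(r_n)<\infty$. The first Borel--Cantelli lemma, which needs no independence, then shows that almost surely only finitely many arcs $B(X_n,r_n)$ meet $G$. Every point of $E\cap G$ lies in infinitely many such arcs, so $E(\mathbf{r},\omega)\cap G=\emptyset$ almost surely.

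For part (1), fix $m$ and $\delta>0$, with $m$ large enough that $2r_n<\delta$ for all $n\ge m$. Then $E\cap G\subseteq\bigcup_{n\ge m} \big(B(X_n,r_n)\cap G\big)$. The nonempty pieces $B(X_n,r_n)\cap G$ form a $\delta$-cover, and each has diameter at most $2r_n$. Using doubling of $h$, this gives
\[
\mathcal{H}^h_\delta(E\cap G)\le \sum_{n\ge m} h(2r_n)\,\Id\big(X_n\in G_{(r_n)}\big)\ll \sum_{n\ge m} h(r_n)\,\Id\big(X_n\in G_{(r_n)}\big).
\]
Taking expectations and applying the covering estimate yields
\[
\mathbb{E}\big[\mathcal{H}^h_\delta(E\cap G)\big]\ll \sum_{n\ge m}\frac{h(r_n)\,r_n}{g(r_n)},
\]
which is the tail of a convergent series and so tends to $0$ as $m\to\infty$. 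Hence $\mathcal{H}^h_\delta(E\cap G)=0$ almost surely for each $\delta$. Letting $\delta\to0$ along a countable sequence gives $\mathcal{H}^h(E\cap G)=0$ almost surely.

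Besides the covering lemma, the only care needed is measurability of these random quantities, which is routine since $E$ is a Borel limsup of random arcs.
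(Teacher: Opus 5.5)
Your proof is correct. The paper gives no proof of its own here: it simply imports the lemma from Bugeaud--Durand, whose argument is essentially this first-moment scheme. That scheme bounds $\lambda(G_{(r)})\ll r/g(r)$ using a maximal $2r$-separated subset of $G$ and the finiteness of $\mathcal{P}^g(G)$. It then applies the first Borel--Cantelli lemma for (2), and bounds the expected $\mathcal{H}^h_\delta$-cost of the cover by the arcs meeting $G$ for (1).

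Two small remarks. First, part (2) as printed omits the hypothesis $\mathcal{P}^g(G)<\infty$, which you tacitly carry over from (1). It is genuinely needed: take $G=\mathbb{T}$, $g(r)=r^{1/2}$, $r_n=n^{-3}$ and i.i.d.\ uniform $X_n$; then $E$ is almost surely a dense $G_\delta$, so $E\cap G\neq\emptyset$. Second, you only ever use $\mathbb{P}[X_n\in A]\ll\lambda(A)$. So your proof also covers the paper's application to real-valued $q_n$, where $\{q_nx\}$ is not exactly uniform but has density at most $\lceil q_n\rceil/q_n$.
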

    
    We will now prove \eqref{intersec_dim_upper}. Let $G$ be an Ahlfors regular set of dimension $s$, and let $\nu \geq 1$ be arbitrary.
    We fix $\varepsilon > 0$ arbitrary and apply Lemma \ref{packing_dichotomy} with $\Omega = [0,1), \mathcal{A} = \mathcal{B}([0,1)),\mathbb{P} = \lambda$ to the sequence $X_n(x) := q_nx \pmod 1$, with $h(x) := x^{1 / \nu + s + \varepsilon -1}, g(x) := x^{s + \varepsilon/2}$ (which are both trivially doubling) and $r_n = \frac{1}{n^{\nu}}$.
    Since $G$ has Hausdorff dimension $s$ and for Ahlfors regular sets, the packing and Hausdorff measures coincide (up to a constant), we have
    \[P^{g}(G) \ll \mathcal{H}^g(G) = \mathcal{H}^{s+\varepsilon/2}(G) < \infty.\]
    Further, we have
    \[ \sum_{n \in \mathbb{N}} \frac{h(r_n)r_n}{g(r_n)} = \sum_{n \in \mathbb{N}} \frac{1}{n^{1 + \nu \varepsilon/2}} < \infty,\]
    thus Lemma \ref{packing_dichotomy} (1) shows $\mathbb{P}[\mathcal{H}^h(E(\mathbf{r},x) \cap G) = 0] = 1$, and therefore,
    $\dimH(E(\mathbf{r},x) \cap G)) \leq \frac{1}{\nu} + s + \varepsilon -1$ almost surely. With $\varepsilon \to 0$, the result follows.
    In case  $\frac{1}{\nu}+\dimH(G)-1 < 0$, pick $\varepsilon > 0$ such that $\frac{1}{\nu}+\dimH(G)-1- \varepsilon < 0$.
    Choosing once more $g(x) := x^{s + \varepsilon/2}$, and now applying Lemma \ref{packing_dichotomy} (2), shows that almost surely, the intersection $E \cap G$ is empty, as required. \\

    \subsection*{Proof of Corollary~\ref{cor2}~(3)}
    
    Let $P \in \bZ[x]$ be a polynomial of degree $d \ge 1$. We will construct a set $\cM \subseteq P\big([\frac{N}{2},N]\big)$ with
    \begin{equation}\label{size_req}
    \#\cM \ge \frac{N}{(\log N)^{O(1)}}
    \end{equation}
    such that, apart from $O_P(N)$ exceptional pairs $(m,n)\in\cM^2$, one has
    \[
    \frac{\gcd(m,n)}{\max\{m,n\}} \le \frac{1}{N^{d-1}}.
    \]
    It is then immediate that Theorem~\ref{thm4} implies Corollary~\ref{cor2}(3). Indeed, the diagonal contribution $m=n$ (corresponding to $m=k$ in Theorem~\ref{thm4}), together with the contribution of the $O_P(N)$ exceptional pairs, is bounded using the estimate $N^{1-\nu-\eps\nu}$ for the $\min(\cdot,\cdot)$ term in Theorem~\ref{thm4}. For the remaining pairs, we have
    \[
    \sum_{\substack{m,n\in\cM\\ \gcd(m,n)/\max\{m,n\}\le N^{1-d}}}
    \frac{\gcd(m,n)}{\max\{m,n\}}\,
    \log\log N
    \ \ll\ 
    (\#\cM)^2\,N^{1-d}\,\log\log N,
    \]
    which yields the desired bound when $d > \nu+1$.
    
    Using (a variant of) the higher-dimensional sieve developed in~\cite{dh08}, we construct $\cM \subseteq P\big([\frac{N}{2},N]\big)$ satisfying~\eqref{size_req} and the following additional properties:
    \begin{enumerate}
    \item[(i)] For every $m\in\cM$ one has $\tau(m)\ll_P 1$, where $\tau$ denotes the divisor function.
    \item[(ii)] For each $e\in\bN$ let
    \[
    \rho_P(e):=\#\bigl\{a\bmod e:\ P(a)\equiv 0 \pmod e\bigr\}
    \]
    be the root-counting function (which is multiplicative). Then, if $e\mid m$ for some $m\in\cM$, then $\rho_P(e)\ll_P 1$. \\ 
    \end{enumerate}
    
    Next, for $e\in\bN$ define
    \[
    S_e:=\{m\in\cM:\ e\mid m\}.
    \]
    Then
    \begin{equation}\label{S_e_S_e_bound}
    \#\bigl\{(m,n)\in\cM^2:\ \gcd(m,n)>N\bigr\}
    \ \le\ \sum_{e>N} (\#S_e)^2
    \ \le\ \Bigl(\max_{e>N}\#S_e\Bigr)\sum_{e>N}\#S_e
    \end{equation}
    (compare~\cite[(4.8)]{richert}). We first bound $\max_{e>N}\#S_e$. For any $e>N$ we have
    \[
    \#S_e
    =\sum_{\substack{m\in\cM\\ e\mid m}}1
    \le \sum_{\substack{k\le N\\ e\mid P(k)}}1
    =\sum_{\substack{\ell=1\\ P(\ell)\equiv 0\ (\mathrm{mod}\ e)}}^{e}
      \ \sum_{\substack{k\le N\\ k\equiv \ell\ (\mathrm{mod}\ e)}}1
    \le \frac{\rho_P(e)}{e}\,N+\rho_P(e)
    \ll \rho_P(e)\ll_P 1,
    \]
    and hence $\max_{e>N}\#S_e\ll_P 1$. Next,
    \[
    \sum_{e>N}\#S_e
    \le \sum_{m\in\cM}\#\{e>N:\ e\mid m\}
    \le \sum_{m\in\cM}\tau(m)
    \ll_P \#\cM
    \ll \frac{N}{(\log N)^{O(1)}}
    \ll N,
    \]
    using~(i). Substituting these bounds into~\eqref{S_e_S_e_bound} yields
    \[
    \#\bigl\{(m,n)\in\cM^2:\ \gcd(m,n)>N\bigr\}\ \ll_P\ N,
    \]
    as required.\\
    
    It remains to construct $\cM$ with the stated properties. If $P$ has no fixed prime divisor, then $\cM$ can be obtained directly from the sieve framework of~\cite[Chapters~9--10]{dh08}. In the general case, $P$ might have finitely many fixed prime divisors; we make a minor modification of the construction (such as restricting to a suitable residue class modulo the product of these finitely many primes) in order to eliminate their influence.

    \begin{lemma}\label{jdim_sieve}
    Let $h = \prod_{i = 1}^j g_i$ be a polynomial with $g_i \in \mathbb{Z}[x]$ irreducible and $g_i \neq g_{i'}$ for all $i \neq i'$. 
    Let $C$ be such that $\rho_h(p) < p-1$ for all $p > C$.
    Then there exists $\varepsilon = \varepsilon_h > 0$ such that
    \[\#\left\{\frac{2N}{3} \leq n \leq N: \forall C < p \leq N^{\varepsilon}: \gcd(h(n),p) = 1\right\} \gg \frac{N}{(\log N)^{O(1)}},
    \]
    where the implied constant only depends on $h$.
    \end{lemma}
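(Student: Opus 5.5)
This is a standard lower-bound sieve statement of dimension $\kappa=j$ (the number of distinct irreducible factors, counted with their root densities). I would deduce it from the fundamental lemma of sieve theory, or from the Diamond--Halberstam $\beta$-sieve of~\cite{dh08}, applied to the sequence $\mathcal{A}=\{h(n): \frac{2N}{3}\le n\le N\}$ with sifting set $\mathcal{P}=\{p: C<p\le N^{\varepsilon}\}$.

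The first step is to set up the sieve axioms. For squarefree $d$ composed of primes in $\mathcal{P}$, the Chinese remainder theorem gives
\[
\#\{n\in[\tfrac{2N}{3},N]: d\mid h(n)\}=\frac{\rho_h(d)}{d}\cdot\frac{N}{3}+r_d,\qquad |r_d|\le\rho_h(d).
\]
Here $\rho_h$ is multiplicative, so $g(d)=\rho_h(d)/d$ is the sieve density. Next I would check the local conditions. The hypothesis $\rho_h(p)<p-1$ for $p>C$ gives $g(p)<1$, and in fact $1-g(p)\ge 1/p$, so no local factor vanishes. For all but finitely many $p$, $h$ is squarefree mod $p$, so $\rho_h(p)\le\deg h$. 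By the prime ideal theorem applied to each $g_i$, we have $\sum_{p\le x}\rho_h(p)\log p/p=j\log x+O(1)$. This is the dimension-$\kappa$ condition with $\kappa=j$, and it yields
\[
\prod_{C<p\le z}\Bigl(1-\frac{\rho_h(p)}{p}\Bigr)\asymp_h (\log z)^{-j}.
\]
The primes with $\rho_h(p)\ge p-1$ are all $\le C$ and are excluded, so they cause no trouble.

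The main step is to apply the fundamental lemma (or the lower-bound $\beta$-sieve) with level of distribution $D=N^{1/2}$ and sifting range $z=N^{\varepsilon}$. This needs $s=\log D/\log z=1/(2\varepsilon)$ to exceed the sieving limit $\beta_\kappa$ of the $\kappa$-dimensional sieve, which is achieved by taking $\varepsilon=\varepsilon_h$ small. The sieve then gives
\[
S(\mathcal{A},\mathcal{P},z)\ge \frac{N}{3}\prod_{C<p\le z}\Bigl(1-\frac{\rho_h(p)}{p}\Bigr)\bigl(1+O(e^{-s})\bigr)-\sum_{d\le D,\,d\mid P(z)}\mu^2(d)|r_d|.
\]
The remainder sum is at most $\sum_{d\le D}\tau_{\deg h}(d)\ll D(\log D)^{O(1)}=N^{1/2+o(1)}$. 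The main term is $\gg N(\log N)^{-j}$ once $s$ is large enough that $1+O(e^{-s})\ge 1/2$. Together these give the claimed bound $N/(\log N)^{O(1)}$.

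The only real obstacle is verifying the sieve dimension hypothesis in the form required by~\cite{dh08}. This comes down to the prime ideal theorem for each $\mathbb{Q}[x]/(g_i)$, which supplies the average of $\rho_{g_i}(p)$. The distinctness of the $g_i$ ensures that $\rho_h(p)=\sum_i\rho_{g_i}(p)$ for all but finitely many $p$, namely those dividing pairwise resultants. Those finitely many primes, and any prime dividing a discriminant, can be absorbed into the implied constant or into $C$. Since a lower bound with an unspecified power of $\log$ suffices, the crude fundamental lemma is enough, and no optimal sieve constants are needed.
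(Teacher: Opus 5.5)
Your proposal is correct and is essentially the paper's argument. The paper simply invokes the fundamental lemma (higher-dimensional sieve) of Diamond--Halberstam together with a Mertens-type product estimate, noting that the finitely many primes where $\rho_h(p)<p$ can fail lie below $C$ and are not sieved. You do exactly this, with more detail on the level of distribution and on the remainder bound $\rho_h(d)\le\tau_{\deg h}(d)$.

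One small simplification: the prime ideal theorem is not needed. For $p>C$ the hypothesis forces $h\not\equiv 0 \pmod p$, hence $\rho_h(p)\le\deg h$; squarefreeness mod $p$ is irrelevant here. This crude bound already gives $\prod_{C<p\le z}(1-\rho_h(p)/p)\gg_h(\log z)^{-\deg h}$, which suffices for an unspecified power of $\log N$.
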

    
    \begin{proof}
    This follows from a standard application of a higher-dimensional sieve (by applying a fundamental-type Lemma and Selberg's sieve, combined with a Mertens-type formula, see Chapters 9 and 10 in~\cite{dh08}).
    Note that the result there was proven under the assumption of having no fixed prime divisor (i.e. $\rho_h(p) < p$ for all $p$). Note that $\rho_h(p) = p$ implies that $p$ divides the discriminant $D_h$. Since $h$ has no multiple roots, $D_h \neq 0$ and thus only finitely many primes divide $D_h$, showing that the condition $\rho_h(p) < p$ can only be violated for the first finitely many primes. Since we only sieve by primes where this condition is satisfied, the estimates done in~\cite{dh08} play over immediately. 
    \end{proof}
    
    Write $P = c \prod_{i = 1}^j f_i^{e_i}, e_i \geq 1$ with $f_i \in \mathbb{Z}[x]$ irreducible and let $H := \prod_{i = 1}^j f_i$. Since $\rho_f (n)$ is multiplicative, it suffices to understand it on prime-powers. Note that $\rho_f(p) \leq \max\{\deg f,p\}$; further note that  if $p \nmid D_f$ (where $D_f$ denotes the discriminant), then for all $r \in \mathbb{N}$,
    $\rho_f(p^r) \leq \rho_f(p)$ (see~\cite{nagel}).\\
    
    Next, we rewrite $H(n)$ as $H(n) = a_d n^d + \ldots + a_1 n + a_0$, $a_d \neq 0$. We claim that there exists $t = t_H \in \mathbb{N}$ so that for all primes $p$ there exists $b_p \pmod {p^t}$ with
    $H(b_p) := c_p \neq 0 \pmod {p^t}$. Indeed, let $b_p \in  (p^{t/2d},p^{t/d})$. Then 
    $H(b_p) = a_d b_p^d + O_{H}(b_p^{d-1})$, which for $t$ sufficiently large satisfies $0 < H(b_p) < p^t$, thus $H(b_p) \neq 0 \pmod {p^t}$.

    We now take a constant $C = C_H := \max\{D_H,\deg H\}$, $P_1 := \prod_{p \leq C} p^t$, and let $c_0 \pmod {P_1}$ be the solution of the congruence system $c_0 \equiv b_p \pmod {p^t}, \forall p \leq C$. Define 
    $h(n) := H(P_1 n + c_0) = \prod_{i=1}^jf_i(P_1 n + c_0)$
    and observe the following properties: 
    
    \begin{itemize}
    \item[(i)] $g_i(n) := f_i(P_1n + c_0)$ is irreducible;
    \item[(ii)] $\forall n \in \mathbb{N}$ and $\forall p \leq C$ one has $p^t \nmid h(n)$;
    \item[(iii)] $\forall p > C$ one has $\rho_h(p) < p$;
    \item[(iv)] $\forall p > C$ and $\forall r \geq 1$ one has $\rho_h(p^r) \ll 1$.
    \end{itemize} 
    
    Indeed, (i) follows from irreducibility of $f_i$, and (ii) follows from the construction of $t,c_0$ above. For (iii), note that $\gcd(p,P_1) = 1$ for $p > C$, thus we have $\rho_h(p) = \rho_H(p) \leq \deg H < p$. Finally, (iv) follows from $\rho_h(p^r) = \rho_H(p^r) \leq \rho_H(p)$ since $p > D_H$. 
    
    Using (i) and (iii), we apply Lemma~\ref{jdim_sieve} with $N_1 := \frac{N}{P_1 +1}$ in place of $N$. This provides us with a set $\mathcal{M}_1 \subset [2N_1/3,N_1]$ with $\#\mathcal{M}_1 \gg \frac{N}{(\log N)^{O(1)}}$ such that for all $m \in \mathcal{M}_1$ and all $1 \leq i \leq j$, we have
    \[p \mid g_i(m) \implies p < C \text{ or } p > N_1^{2\varepsilon} > N^{\varepsilon}.\] In particular,
    using (ii), we obtain that for all $m \in \mathcal{M}_1$, we have
    $\tau(h(m)) \ll_h 1$. We define (element-wise) $\mathcal{M}_2 := P_1\mathcal{M}_1 + c_0 \subseteq [N/2,N]$,
    and see that for $m \in \mathcal{M}_2$, $\tau(H(m)) \ll 1$, and further, setting $\mathcal{M} := P(\mathcal{M}_2)$, we get 
    $\tau(m) \ll 1$ for all $m \in \mathcal{M}$, since $P(n) \mid c H(n)^{\deg P}$. Finally, for all $e | n \in h(\bN)$ one has $\rho_P (e) \ll 1$ by (ii) and (iv).

    %APPENDIX. RANDOM MODEL
    
    %%%%%%%%%%%%%%%%%%%%%%%%%%%%%%%%%%%%%
    
\appendix

\makeatletter
\renewcommand{\@seccntformat}[1]{\csname the#1\endcsname:\enspace}
\makeatother

\section{Random model for Littlewood--Cassels}\label{app:random-model}

    In this appendix we prove Theorem~\ref{thm3}. We note that throughout the proof the implied constants in $\ll,\gg$ do not depend on the constants $C$ or $\varepsilon$ defined in~\eqref{fake_khintchine} and~\eqref{fake_convergence}, respectively. Consequently, at the end $C$ and $\varepsilon$ can be chosen sufficiently large and respectively small.

We will need the following lemma on the size of $\sum_n \psi(n)$ along narrower exponential ranges:

\begin{lemma} \label{about_exp_gap}
Assume \eqref{fake_khintchine}. Then for every $c > 1$, one has
\begin{equation}\label{fake_khin_c}
    \limsup_{N \to \infty} \sum_{2^N \leq n \leq 2^{(c^N)}} \psi(n) \gg_c C.
\end{equation}
\end{lemma}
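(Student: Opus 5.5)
The approach is a covering argument. Fix $c>1$. I will cover each range $(N,2^N]$ by a bounded number of ranges of the form $[2^M,2^{(c^M)}]$, with $M\to\infty$ as $N\to\infty$. Then \eqref{fake_khintchine} forces at least one of these ranges to carry a large share of the mass of $\psi$. Only $\psi\ge 0$ is needed here, not monotonicity.

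First, take $N$ large with $\sum_{N<n\le 2^N}\psi(n)>C$; there are infinitely many such $N$ by \eqref{fake_khintchine}. Set $M_0:=\lfloor \log N\rfloor$, so that $2^{M_0}\le N$. Define inductively $M_{i+1}:=\lfloor c^{M_i}\rfloor$, so that the ranges $[2^{M_i},2^{(c^{M_i})}]$ cover $[2^{M_0},2^{M_k}]$ without gaps. Since $c^{M}$ exceeds $M$ once $M$ is large in terms of $c$, and $M_0\to\infty$ with $N$, this sequence is increasing for large $N$.

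Second, I check that a bounded number of steps already reaches beyond $2^N$. We have $M_1\ge c^{\log N-1}-1\gg_c N^{\log c}$. Hence $M_2\ge c^{M_1}-1\ge \exp\bigl(\log c\cdot N^{\log c}/O_c(1)\bigr)-1$, which exceeds $N$ for all sufficiently large $N$. So $2^{(c^{M_2})}\ge 2^{M_3}\ge 2^N$, and
\[
(N,2^N]\subseteq \bigcup_{i=0}^{2}\,[2^{M_i},2^{(c^{M_i})}].
\]
This tower-type growth is the main point. It shows the number of pieces does not grow with $N$.

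Third, since $\psi\ge 0$,
\[
C<\sum_{N<n\le 2^N}\psi(n)\le \sum_{i=0}^{2}\ \sum_{2^{M_i}\le n\le 2^{(c^{M_i})}}\psi(n).
\]
So some $i\in\{0,1,2\}$ satisfies $\sum_{2^{M_i}\le n\le 2^{(c^{M_i})}}\psi(n)>C/3$. Every $M_i$ is at least $M_0\ge \log N-1$, so this happens for arbitrarily large $M$ as $N$ runs through the infinitely many admissible values. Relabelling $M$ as $N$ gives \eqref{fake_khin_c}, even with constant $C/3$ independent of $c$; the dependence on $c$ enters only through how large $N$ must be.

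The only step needing care is the second one: verifying the bounded number of steps uniformly in $N$, and in particular handling the floors and the requirement that $M_0$ is large enough for $c^{M}>M$. This is routine once $N\ge N_0(c)$.
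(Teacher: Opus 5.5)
Your argument is correct and is essentially the paper's: both cover the hypothesised range by a bounded number of ranges of the form $[2^M,2^{(c^M)}]$ and then pigeonhole, using only $\psi\ge 0$. The paper first passes to the case $c=4$ via $(N,2^N]\subseteq[2^R,2^{(4^R)}]$ for $2^R\le N<2^{R+1}$, then splits into $m$ pieces with $c^m>4$ to get $C/m$; your direct iteration $M_{i+1}=\lfloor c^{M_i}\rfloor$ needs only three pieces and gives the $c$-independent constant $C/3$.
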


\begin{proof}
Take $m \in \bN$ such that $c^m > 4$. Then by the pigeonhole principle, 
\[
    \limsup_{R \to \infty} \sum_{2^R \leq n \leq 2^{(c^R)}} \psi(n) > \frac{1}{m}
    \limsup_{R \to \infty} \sum_{2^R \leq n \leq 2^{(4^R)}} \psi(n).
\] Choosing $2^R \leq N \leq 2^{R+1}$ with $N$ as in~\eqref{fake_khintchine}, it follows immediately that
\[
    \limsup_{R \to \infty} \sum_{2^R \leq n \leq 2^{(4^R)}} \psi(n) > C,
\] which proves the claim.

\end{proof}

We will also need a few standard preliminaries on the distribution of integers in diophantine Bohr sets:

\begin{lemma}\cite[Lemma 6.2]{BHV20}\label{62}
    Let $\alpha$ be irrational with convergent denominators $(q_k)_{k \in \bN}, N \in \bN, \varepsilon_N > 0$ such that $N^{-1} < 2\varepsilon_N < \lVert q_2\alpha\rVert$. For $q_K \leq N < q_{K+1}$, we define 
    \[
        M := \max \Big( \varepsilon_N N, \ \min \Big( \varepsilon_N q_{K+1},\frac{N}{2q_K} \Big) \Big).
    \] Then 
    \[
        \lfloor M \rfloor \leq \#\{n \leq N: \lVert n\alpha \rVert < \varepsilon_N\} \leq 32M.
    \] 
\end{lemma}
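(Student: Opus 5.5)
The bound rests on the best-approximation properties of the convergent denominators $(q_k)$. I would use three standard facts: $\frac{1}{2q_{k+1}}<\norm{q_k\alpha}<\frac{1}{q_{k+1}}$; if $0<m<q_{k+1}$ then $\norm{m\alpha}\ge\norm{q_k\alpha}$; and $\norm{j q_k\alpha}=j\norm{q_k\alpha}$ as long as $j\norm{q_k\alpha}<\frac12$. The lower and upper bounds are proved separately, and each splits according to which term realises $M$.

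\emph{Lower bound.} I would give two constructions and keep the better one.

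First, take the multiples $n=jq_K$ with $1\le j\le \min\bigl(\varepsilon_N q_{K+1},\frac{N}{2q_K}\bigr)$. Each satisfies $n\le N$ and $\norm{jq_K\alpha}\le j\norm{q_K\alpha}<\frac{j}{q_{K+1}}\le\varepsilon_N$. The bound is strict except possibly at the endpoint $j=\varepsilon_Nq_{K+1}$, which only costs the floor. This yields $\bigl\lfloor\min\bigl(\varepsilon_Nq_{K+1},\frac{N}{2q_K}\bigr)\bigr\rfloor$ solutions.

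Second, for the term $\varepsilon_N N$, I would use a pigeonhole argument. Cover the circle by $\lceil 1/\varepsilon_N\rceil$ arcs of length less than $\varepsilon_N$ and distribute the points $\{n\alpha\}$, $0\le n\le N$, among them. Some arc receives at least $(N+1)/\lceil1/\varepsilon_N\rceil$ points. Subtracting the smallest index in that arc from the others gives distinct $n\le N$ with $\norm{n\alpha}<\varepsilon_N$. The hypothesis $2\varepsilon_N>N^{-1}$ is what lets one check that this count is at least $\lfloor\varepsilon_NN\rfloor$. This is the step where the constants are tight and I expect some fiddling, possibly by choosing the arc length slightly differently or using half-open arcs.

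\emph{Upper bound.} Let $S$ denote the set of solutions $n\le N$. For $n_1<n_2$ in $S$ we have $\norm{(n_2-n_1)\alpha}<2\varepsilon_N$. Let $k$ be minimal with $\norm{q_k\alpha}<2\varepsilon_N$; the hypothesis $2\varepsilon_N<\norm{q_2\alpha}$ guarantees $k\ge3$. By best approximation, every difference of two elements of $S$ is at least $q_k$. Moreover, any difference smaller than $q_{k+1}$ must be of the form $jq_k$ with $j\norm{q_k\alpha}<2\varepsilon_N$. Hence $S$ decomposes into chains of arithmetic progressions with step $q_k$, each of length at most $2\varepsilon_N/\norm{q_k\alpha}+1\ll \varepsilon_Nq_{k+1}+1$, and distinct chains are separated by at least about $q_{k+1}$. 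This gives
\[
\#S\ll \Bigl(\frac{N}{q_{k+1}}+1\Bigr)\bigl(\varepsilon_N q_{k+1}+1\bigr)\ll \varepsilon_NN+\varepsilon_Nq_{k+1}+\frac{N}{q_{k+1}}+1 .
\]
I would then bound each term case by case.

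If $k\ge K+1$, there are no two solutions at all, since differences are below $N<q_{K+1}$. So $\#S\le1\le 2\varepsilon_NN\le 2M$.

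If $k<K$, then $q_{k+1}\le q_K\le N$, so $\varepsilon_Nq_{k+1}\le\varepsilon_NN$. Also $\frac{1}{2q_{k+1}}<\norm{q_k\alpha}<2\varepsilon_N$, so $\frac{N}{q_{k+1}}<4\varepsilon_NN$. Every term is therefore $\ll\varepsilon_NN$.

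If $k=K$, there is a single chain because $N<q_{K+1}$. Its length is at most both $\ll\varepsilon_Nq_{K+1}+1$ and $N/q_K+1$. This gives $\#S\ll\min\bigl(\varepsilon_Nq_{K+1},\frac{N}{2q_K}\bigr)+1$, and the $+1$ is absorbed by $\varepsilon_NN>\frac12$.

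The main obstacle is the chain decomposition in the upper bound: showing rigorously that solutions within distance $q_{k+1}$ of one another differ by multiples of $q_k$, and then tracking constants so that the total is at most $32M$. I would do this using the Ostrowski expansion of differences below $q_{k+1}$.
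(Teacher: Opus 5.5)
The paper quotes this lemma from \cite{BHV20} without proof, so I am judging your argument on its own. The overall plan is sound: two lower-bound constructions, and for the upper bound best-approximation separation plus a case split on $k$ versus $K$. Your multiples-of-$q_K$ construction is correct. However, two steps fail as written.

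First, the pigeonhole for the term $\varepsilon_N N$. A partition into $m=\lceil 1/\varepsilon_N\rceil$ arcs only yields $\lceil (N+1)/m\rceil-1\approx N/m$ solutions. When $1/\varepsilon_N\notin\bN$ this falls short of $\lfloor\varepsilon_N N\rfloor$ by roughly $N(\varepsilon_N-1/m)$, which is unbounded as $N\to\infty$. For example, $\varepsilon_N=0.11$, $N=1000$ (admissible for $\alpha=(\sqrt5-1)/2$) gives $100$ instead of $110$. The hypothesis $2\varepsilon_N>N^{-1}$ plays no role here. No other partition helps either, since any partition into arcs of length at most $\varepsilon_N$ has at least $\lceil 1/\varepsilon_N\rceil$ pieces. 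What you need is a single half-open arc of length exactly $\varepsilon_N$, positioned by averaging. The arcs $[\{n\alpha\},\{n\alpha\}+\varepsilon_N)$, $0\le n\le N$, have total length $(N+1)\varepsilon_N$, so some point of the circle lies in at least $\lceil (N+1)\varepsilon_N\rceil$ of them. The corresponding points $\{n\alpha\}$ then lie in a half-open arc of length $\varepsilon_N$. Subtracting the smallest index gives $\lceil (N+1)\varepsilon_N\rceil-1\ge\lfloor\varepsilon_N N\rfloor$ distinct solutions.

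Second, the structural claim in the upper bound is false. Suppose $a_{k+1}\ge 2$. Then $d=q_k+q_{k-1}<q_{k+1}$ is not a multiple of $q_k$. Yet $q_{k-1}\alpha-p_{k-1}$ and $q_k\alpha-p_k$ have opposite signs, so $\norm{d\alpha}=\norm{q_{k-1}\alpha}-\norm{q_k\alpha}$. This is $<2\varepsilon_N$ whenever $2\varepsilon_N\in(\norm{q_{k-1}\alpha}-\norm{q_k\alpha},\,\norm{q_{k-1}\alpha}]$, a range compatible with the minimality of $k$. The same holds for every semiconvergent $jq_k+q_{k-1}$ with $j<a_{k+1}$. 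For large $N$ such differences really occur in $S$. So neither the chain decomposition nor the ``single chain'' when $k=K$ can be proved, via Ostrowski or otherwise.

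You need no structure, only separation. For any $n_0$, the points $\{n\alpha\}$ with $n_0<n\le n_0+q_{k+1}$ are pairwise at distance at least $\norm{q_k\alpha}>1/(2q_{k+1})$. Hence at most $2\varepsilon_N/\norm{q_k\alpha}+1<4\varepsilon_N q_{k+1}+1$ of them lie in $(-\varepsilon_N,\varepsilon_N)$. Covering $[1,N]$ by $\lceil N/q_{k+1}\rceil$ such windows recovers exactly your displayed bound. Your case split then closes with explicit constants:
\begin{itemize}
\item $\#S<16\varepsilon_N N$ if $k<K$;
\item $\#S\le\min\bigl(4\varepsilon_N q_{K+1}+1,\,N/q_K\bigr)\le 6M$ if $k=K$, since elements of $S$ are $\ge q_K$ and pairwise at least $q_K$ apart;
\item $S=\emptyset$ if $k>K$, since every element of $S$ is at least $q_k>N$.
\end{itemize}
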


\vspace{1ex}

\begin{lemma}\cite[Lemma 6.3]{BHV20}\label{63}
For any $\varepsilon_N> 0, N \in \mathbb{N}, \gamma \in \mathbb{R}$ we have
\[
    \#\{n \leq N: \lVert n\alpha - \gamma\rVert < \varepsilon_N\} \leq \#\{n \leq N: \lVert n\alpha \rVert < 2\varepsilon_N\} +1,
\] and if $\#\{n \leq \mfrac{N}{2}: \lVert n\alpha - \gamma\rVert < \mfrac{\varepsilon_N}{2} \} \geq 1$, then 
\[
    \# \big\{n \leq N: \lVert n\alpha - \gamma\rVert < \varepsilon_N \big\} \geq \# \big\{n \leq N: \lVert n\alpha \rVert < \frac{\varepsilon_N}{2} \big\} +1.
\]
\end{lemma}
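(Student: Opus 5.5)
The first inequality I would prove with a pigeonhole/difference argument. Let $n_1<n_2<\dots<n_k$ be all integers $n\le N$ with $\norm{n\alp-\gam}<\eps_N$. If $k=0$ there is nothing to show. Otherwise, for each $2\le i\le k$ the difference $d_i:=n_i-n_1$ lies in $[1,N-1]$. By the triangle inequality,
\[
\norm{d_i\alp}\le \norm{n_i\alp-\gam}+\norm{n_1\alp-\gam}<2\eps_N .
\]
The $d_i$ are pairwise distinct, so the $k-1$ numbers $d_2,\dots,d_k$ are distinct elements of $\{n\le N:\norm{n\alp}<2\eps_N\}$. This gives $k\le \#\{n\le N:\norm{n\alp}<2\eps_N\}+1$. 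No use of irrationality or of Lemma~\ref{62} is needed here.

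For the second inequality I would use the reverse operation, translation. Fix $n_0\le \mfrac{N}{2}$ with $\norm{n_0\alp-\gam}<\mfrac{\eps_N}{2}$. For every integer $m$ with $\norm{m\alp}<\mfrac{\eps_N}{2}$ we have $\norm{(n_0+m)\alp-\gam}<\eps_N$, and $n_0+m\in[1,N]$ exactly when $m\in[1-n_0,\,N-n_0]$. The value $m=0$ contributes $n_0$ itself, which is the ``$+1$''. It then remains to show that the nonzero $m$ in the window $W:=[1-n_0,N-n_0]$ with $\norm{m\alp}<\mfrac{\eps_N}{2}$ are at least as numerous as those in $[1,N]$. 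Since $\norm{m\alp}=\norm{-m\alp}$, the negative part of $W$ folds onto $[1,n_0-1]$. So the task is to compare the count over $[1,N-n_0]\cup[1,n_0-1]$, counted with multiplicity, with the count over $[1,N]$.

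This comparison is the step I expect to be the main obstacle, because a window of length $N$ containing $0$ need not contain as many small values of $\norm{m\alp}$ as $[1,N]$ does. I would try two approaches, in this order.

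\emph{First}, use the structure of the Bohr set $\{m:\norm{m\alp}<\mfrac{\eps_N}{2}\}$. By the three-gap/Ostrowski structure, its elements in any interval of length $N$ form a few arithmetic-like progressions. Combined with the hypothesis $n_0\le\mfrac{N}{2}$, this should show that $N-n_0\ge\mfrac{N}{2}$ and the folded piece $[1,n_0-1]$ together cover at least the solutions in $[1,N]$.

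\emph{Second}, if that fails, I would settle for a version with an absolute constant loss. Lemma~\ref{62} shows that the count over $[1,\mfrac{N}{2}]$ is comparable to the count over $[1,N]$ up to a factor of $32$, and such a version is all the appendix actually needs, since only orders of magnitude of these counts enter the proof of Theorem~\ref{thm3}.
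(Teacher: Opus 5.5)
Your proof of the first inequality is correct. It is the standard difference argument, and it needs nothing about $\alpha$.

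The second part has a genuine gap, and the step you flag cannot be closed: with $n\le N$ in the right-hand count, as transcribed here, the inequality is false.

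\emph{Counterexample.} Take $\gamma=0$, $N=100$, $\varepsilon_N=10^{-2}$ and $\alpha=10^{-6}\sqrt2$. Then $\|n\alpha\|<1.5\cdot 10^{-4}<\varepsilon_N/2$ for every $n\le N$, so the hypothesis holds with $n_0=1$. The left-hand side equals $100$ and the right-hand side equals $101$. Even more simply, any $\varepsilon_N>1$ with $N\ge 2$ gives $N$ versus $N+1$ for every $\alpha$.

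This is exactly the degenerate case of your folding step. If every $m$ lies in the Bohr set, the window $[1-n_0,N-n_0]\setminus\{0\}$ yields $(N-n_0)+(n_0-1)=N-1$ values, one short of the $N$ values in $[1,N]$. So no three-gap or Ostrowski argument can produce the comparison you want. Your fallback via Lemma~\ref{62} does not prove the lemma either. It gives a different, constant-loss statement, and it needs that lemma's extra hypothesis $N^{-1}<2\varepsilon_N<\|q_2\alpha\|$, which Lemma~\ref{63} does not assume.

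The form of the second inequality that does hold has $\#\{n\le N/2:\|n\alpha\|<\varepsilon_N/2\}$ on the right. This is clearly what the hypothesis $n_0\le N/2$ is designed for. Your translation idea proves it in one line once you drop the folding. Take $n_0\le N/2$ as in the hypothesis. Each $1\le m\le N/2$ with $\|m\alpha\|<\varepsilon_N/2$ gives a distinct $n=n_0+m\in(n_0,N]$ with $\|n\alpha-\gamma\|<\varepsilon_N$, and $n_0$ itself supplies the $+1$.

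This corrected form is enough where the lemma is used. In Corollary~\ref{bohr_estim_cor}, applying Lemma~\ref{62} at $N/2$ with radius $\varepsilon_N/2$ gives a main term of $\varepsilon_N N/4$ instead of $\varepsilon_N N/2$. Since $b\ge 300$, this still leaves a positive multiple of $\varepsilon_N N$ after subtracting $64\varepsilon_N N/b$.
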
 

\vspace{1ex}

\begin{cor} \label{bohr_estim_cor}
There exist absolute $c_2 > c_1 > 0$ such that for
any $\alpha \in \Bad$ and $b \ge 300$ there exist $K = K(\alpha, b) > 0$ such that for any $\varepsilon_N \geq \mfrac{K}{N}$ the following inequalities hold:
\[
    c_1 N\varepsilon_N \le \# \Big\{n \leq N: \frac{\eps_N}{b} \le \lVert n\alpha - \gamma\rVert < \varepsilon_N \Big\} \le c_2 N\varepsilon_N.
\]
\end{cor}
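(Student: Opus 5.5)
The plan is to derive both inequalities directly from Lemmas~\ref{62} and~\ref{63}. We use only one property of $\alpha\in\Bad$: there is $A=A(\alpha)\ge 1$ with $q_{k+1}\le A q_k$ for all convergent denominators $q_k$ of $\alpha$. We write
\[
  \#\Big\{n\le N:\ \tfrac{\eps_N}{b}\le\|n\alpha-\gamma\|<\eps_N\Big\}
  = \#\{n\le N:\|n\alpha-\gamma\|<\eps_N\}-\#\{n\le N:\|n\alpha-\gamma\|<\eps_N/b\}.
\]
We then bound the first term from below and the second from above by multiples of $N\eps_N$. The constants will be absolute, while $K(\alpha,b)$ absorbs everything depending on $\alpha$.

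\emph{Homogeneous count.} Let $q_K\le N<q_{K+1}$. Then $q_K>N/A$, so $\frac{N}{2q_K}<\frac A2$. If $K(\alpha,b)\ge A$, then $N\eps\ge A/2$ for every $\eps\ge \eps_N/(2b)$ (after enlarging $K$ by a factor $2b$). In $M=\max(\eps N,\min(\eps q_{K+1},N/(2q_K)))$ the inner minimum is then at most $A/2\le \eps N$, so $M=\eps N$. Lemma~\ref{62} therefore gives
\[
\lfloor \eps N\rfloor\le \#\{n\le N:\|n\alpha\|<\eps\}\le 32\,\eps N
\]
throughout the relevant range. This requires $N^{-1}<2\eps<\|q_2\alpha\|$. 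The lower condition holds once $K\ge b$. The upper condition fails only for $\eps$ bounded below by a constant depending on $\alpha$; in that regime the upper bound is trivial (the count is at most $N$), and the lower bound can be obtained by applying the argument at a smaller admissible $\eps$ and using monotonicity. This is a routine case split.

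\emph{Upper bound.} By the first part of Lemma~\ref{63} and the homogeneous count at $2\eps_N$, the first term is at most $64\,\eps_N N+1\le 65\,\eps_N N$ once $\eps_N N\ge K\ge 1$. Hence $c_2=65$ works, since the difference is at most the first term.

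\emph{Lower bound.} To apply the second part of Lemma~\ref{63}, we must exhibit $n\le N/2$ with $\|n\alpha-\gamma\|<\eps_N/2$. For $\alpha\in\Bad$, the three-gap theorem shows that the largest gap among $\{n\alpha\}$, $n\le N/2$, is at most $C_\alpha/N$. Here $C_\alpha$ is about $2A$, because every gap is of order $\|q_{k-1}\alpha\|\asymp 1/q_k$ with $q_k\gg_A N$. So such an $n$ exists as soon as $K(\alpha,b)\ge 2C_\alpha$. Lemma~\ref{63} and the homogeneous count then give that the first term is at least $\lfloor \eps_N N/2\rfloor+1\ge \eps_N N/2$. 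For the subtracted term, the upper bound applied with $\eps_N/b$ in place of $\eps_N$ gives at most $65\,\eps_N N/b+1\le \frac{65}{300}\eps_N N+1$. Altogether the count is at least $\big(\tfrac12-\tfrac{65}{300}\big)\eps_N N-1\ge \tfrac14\eps_N N$ for $K$ large, so $c_1=\tfrac14$. The condition $b\ge300$ is exactly what keeps $65/b$ below $1/2$.

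The main obstacle is keeping $c_1,c_2$ absolute rather than dependent on $A(\alpha)$. The key observation is that $\eps_N\ge K/N$ with $K\ge A$ forces the term $\min(\eps q_{K+1},N/(2q_K))$ in Lemma~\ref{62} below $\eps N$. The other point needing care is the inhomogeneous existence statement needed to invoke the second half of Lemma~\ref{63}, which I would handle via the three-gap theorem as described.
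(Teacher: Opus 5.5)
Your main argument is the same as the paper's. You split the annulus count as a difference. For the upper bound you use the first half of Lemma~\ref{63} with Lemma~\ref{62}, where $M=\eps N$ is forced by $q_{k+1}\ll_\alpha q_k$ once $\eps N\gg_\alpha 1$. For the lower bound you use the second half of Lemma~\ref{63}, with its existence hypothesis supplied by the $O_\alpha(1/N)$ dispersion of $(n\alpha)_{n\le N/2}$. Even the constants essentially coincide.

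\textbf{The gap: the large-$\eps_N$ case split.} The one step that fails as written is your ``routine case split'' for the regime where the hypothesis $2\eps<\norm{q_2\alpha}$ of Lemma~\ref{62} fails. Your upper bound needs it at $\eps=2\eps_N$, i.e.\ $\eps_N<\norm{q_2\alpha}/4$.
\begin{itemize}
\item The threshold is not absolute. Since $\norm{q_2\alpha}<1/q_3$ and $q_3$ can be arbitrarily large for $\alpha\in\Bad$ (take $a_1=a_2=a_3$ large), this threshold is not bounded below by any absolute constant.
\item The trivial upper bound is not enough. The bound $N$ is at most $c_2\eps_N N$ only when $\eps_N\ge 1/c_2$. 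On the range $\norm{q_2\alpha}/4\le\eps_N<1/c_2$ it only gives the $\alpha$-dependent constant $4/\norm{q_2\alpha}$.
\item Monotonicity does not give the lower bound. The annulus count is not monotone in $\eps_N$. Passing to an admissible $\eps'<\eps_N$ yields only $\gg\eps' N$, which loses the unbounded factor $\eps_N/\eps'$. Also, the subtracted term at $\eps_N/b$ may itself lie outside the admissible range.
\end{itemize}

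\textbf{How to repair it.} The fix is easy because Lemma~\ref{63} is inhomogeneous. Set $\eps_0:=\norm{q_2\alpha}/100$.
\begin{itemize}
\item Upper bound: cover the arc $\{y:\norm{y-\gamma}<\eps_N\}$ by $\lceil\eps_N/\eps_0\rceil$ arcs of radius $\eps_0$.
\item Lower bound: pack $\gg\eps_N/\eps_0$ disjoint arcs of radius $\eps_0$ into the annulus.
\end{itemize}
Run your argument on each piece. The existence hypothesis for every piece follows from the same dispersion bound after enlarging $K$. The accumulated $+1$ errors total $O(\eps_N/\eps_0)=O_\alpha(1)$ and are absorbed by $\eps_N N\ge K$. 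Alternatively, in this regime $\eps_N\gg_\alpha 1$, so the discrepancy bound $D_N\ll_\alpha(\log N)/N$ for $\alpha\in\Bad$ suffices.

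The paper's proof passes over this regime without comment, so you were right to flag it; only your patch needs replacing.
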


\begin{proof}
First, note that for badly approximable $\alpha$ one has $\mfrac{q_{k+1}}{q_k} \asymp_{\alpha} 1$. Thus, applying the first part of Lemma~\ref{63} and then Lemma~\ref{62} (with $M=2\eps_N N$ and with $K(\alpha,b)$ taken sufficiently large) we find
\begin{multline*}
    \# \Big\{n \leq N: \frac{\eps_N}{b} \le \lVert n\alpha - \gamma\rVert < \varepsilon_N \Big\} \le \# \Big\{n \leq N: \lVert n\alpha - \gamma\rVert < \varepsilon_N \Big\} \le \\
    \# \Big\{n \leq N: \lVert n\alpha \rVert < 2\varepsilon_N \Big\} + 1 \le 64 \eps_N N + 1.
\end{multline*} 
For the lower bound, note that since the dispersion of $(n\alpha)_{n \leq N} \pmod 1$ is bounded by $\mfrac{K(\alpha,b)}{N}$ when $K$ is sufficiently large, we see that $\#\{n \leq \mfrac{N}{2}: \lVert n\alpha - \gamma\rVert < \mfrac{\varepsilon_N}{2} \} \geq 1$. Thus we may apply both parts of Lemma~\ref{63} and then Lemma~\ref{62} (with $M = \mfrac{\eps_N N}{2}$ and $M = \mfrac{2\eps_N N}{b}$ respectively) to find
\begin{multline*}
    \# \Big\{n \leq N: \frac{\eps_N}{b} \le \lVert n\alpha - \gamma\rVert < \varepsilon_N \Big\} = \# \Big\{n \leq N: \lVert n\alpha - \gamma\rVert < \varepsilon_N \Big\} - \\
    \# \Big\{n \leq N:  \lVert n\alpha - \gamma\rVert < \frac{\eps_N}{b} \Big\} \ge \# \Big\{ n \le N : \norm{n\alpha} < \frac{\eps_N}{2} \Big\} + 1 - \\
    \# \Big\{ n \le N: \norm{n\alpha} < \frac{2\eps_N}{b} \Big\} - 1 \ge \frac{\eps_N N}{2} - \frac{64 \eps_N N}{b} > \frac{\eps_N N}{4},
\end{multline*} which completes the proof.
\end{proof}

\vspace{1ex}

\subsection*{Covering case} 

Let $\psi:\N\to[0,\infty)$ be a monotonically decreasing function satisfying~\eqref{fake_khintchine}. We first observe that, without loss of generality, it suffices to prove the statement for functions $\psi$ satisfying the following additional properties:

\begin{enumerate}
\item[(i)] $\psi$ is constant on $b$-adic blocks $(b^k,b^{k+1}]$ and only takes values of the form $\mfrac{1}{2^{\ell}}$, i.e. for every $n\in[b^k,b^{k+1})$ one has $\psi(n)=\mfrac{1}{2^{\ell(k)}}$;
\item[(ii)] one has $\psi(n)\le \mfrac{C}{n}$ for all $n\in\bN$.
\end{enumerate}

The first part follows immediately from the Cauchy condensation test, at the cost of a factor of at most $2b$. 
The second part follows from the first one upon noticing that $\min \{ \psi(n), \mfrac{C}{n}\}$ satisfies~\eqref{fake_khintchine}.

Clearly, \eqref{inhomo_meas1} is equivalent to requiring that $[0,1)$ is covered infinitely often, almost surely, by the random arcs
\[
    r_n=\Bigl(X_n-\mfrac{\ell_n}{2},\,X_n+\mfrac{\ell_n}{2}\Bigr),
\] where
\begin{equation} \label{our_lengths}
    \l_n := \frac{\psi(n)}{\|n\alp-\gamma\|}.
\end{equation}
By Shepp's result~\cite{shepp}, it is sufficient that
\begin{equation}\label{shepp_cond}
    \sum_{n} \frac{1}{n^2} \exp \Big(\sum_{k \leq n} \l_k \Big) = \infty,
\end{equation}
provided that $(\l_n)_{n\in\mathbb{N}}$ is non-increasing. Since this is clearly not the case for the lengths $\ell_n$ defined in~\eqref{our_lengths}, we reorder them into a non-increasing sequence. Note that we may discard some of the lengths $\ell_n$ from the sequence: a covering by a subsequence clearly implies a covering by the original family.

For $\ell\in\bN$, define
\[
    S_{\ell} := \left\{ b^{2\ell} < n \le b^{b^{\ell}}: \frac{1}{b^{\ell}} \le \frac{\psi(n)}{\lVert n\alpha - \gamma\rVert} < \frac{1}{b^{\ell-1}} \right\}.
\] Note that $S_i\cap S_j=\emptyset$ for $i\neq j$. We only retain those lengths $\ell_k$ for which $k$ belongs to one of the sets $S_{\ell}$, $\ell\in\bN$. We will show that, for infinitely many $L$, one has
\begin{gather} \label{collection_size}
    T_1 := \sum_{\ell \le L} \# S_{\ell} \ll_{\alpha} b^{2L}, \\
\label{cond_on_sl}
    T_2 := \sum_{\ell \leq L} \frac{\#S_{\ell}}{b^\ell} \gg C L.
\end{gather}
This will imply Theorem~\ref{thm3} directly. Indeed, given such an $L$, pick the smallest $n$ with $n\ge T_1$. Then $\log n \le 3L\log b$ (for $n$ sufficiently large), and hence
\[
    \frac{1}{n^2} \exp \Big( \sum_{k \le n} \ell_k \Big)
    \ge \frac{1}{n^2} \exp \Big( \sum_{\ell \le L} \frac{\# S_{\ell}}{b^{\ell}} \Big)
    \ge \frac{1}{n^2} \exp \Big( C \frac{\log n}{3 \log b} \Big)
    \ge 1,
\]
since $C>0$ can be chosen sufficiently large in terms of $b$. The result then follows from Shepp's theorem.

Thus, we need to show~\eqref{collection_size} and~\eqref{cond_on_sl}. For $T_1$, we have (recall (i))
\begin{align*}
    T_1 & \le \sum_{\ell \le L} \# \Big\{ b^{2\ell} <  n \le b^{b^{\ell}}: \frac{1}{b^{\ell}} \le \frac{\psi(n)}{\norm{\alpha n - \gamma}} < \frac{1}{b^{\ell-1}} \Big\}  \\
   &  \quad\ls \sum_{\ell \le L} \sum_{2\ell \le j \le b^{\ell}} \# \Big\{ b^j < n \le b^{j+1}: \norm{\alpha n - \gamma} \le b^{\ell} \psi(b^j) \Big\}.
\end{align*} 

Note that by (ii), $b^{\ell}\psi(b^j)\le \mfrac{C b^{\ell}}{b^j}<1$ in the range $2\ell\le j$. Let $K(\alpha, b)$ be as in Corollary~\ref{bohr_estim_cor}. We split the indices $j$ into two classes: those for which $b^{\ell}\psi(b^j)\ge \mfrac{K(\alpha, b)}{b^j}$, where we will apply Corollary~\ref{bohr_estim_cor}; and those for which $b^{\ell}\psi(b^j)< \mfrac{K(\alpha, b)}{b^j}$, where we can bound the number of $n$ in the set
\[
    \Big\{ b^j < n \le b^{j+1}: \norm{\alpha n - \gamma} \le b^{\ell} \psi(b^j) \Big\}
\]
by $O_{\alpha}(1)$, since $\alpha\in\Bad$:
\begin{multline*}
    T_1 \ll \sum_{\ell \le L} \sum_{2\ell \le j \le b^{\ell}} b^j \cdot b^{\ell} \psi(b^j) \Id \Big( b^{\ell} \psi(b^j) \ge \frac{K(\alpha, b)}{b^j} \Big) + \sum_{\ell \le L} \sum_{2\ell \le j \le b^{\ell}} \Id \Big( b^{\ell} \psi(b^j) < \frac{K(\alpha, b)}{b^j} \Big)  \\
    \ll  \sum_{\ell \le L} \sum_{2\ell \le j \le b^{\ell}} b^j \cdot b^{\ell} \frac{K(\alpha, b)}{b^j} + b^L \ll_{\alpha} b^{2L}. 
\end{multline*}

Next, we evaluate $T_2$ from below using Corollary~\ref{bohr_estim_cor}:
\begin{multline*}
    T_2 \ge \sum_{L/2 < \ell \le L} \frac{1}{b^{\ell}} \sum_{2\ell \le j < b^{\ell/2}} \# \Big\{ b^j < n \le b^{j+1}: b^{\ell-1} \psi(b^j) < \norm{n\alpha-\gamma} \le b^{\ell} \psi(b^j) \Big\} \gg \\
    \sum_{L/2 < \ell \le L} \frac{1}{b^{\ell}} \sum_{2\ell \le j < b^{\ell/2}} b^{j+\ell} \psi(b^j) \Id \Big( b^{\ell} \psi(b^j) \ge \frac{K(\alpha, b)}{b^j} \Big) = \\
    \sum_{L/2 < \ell \le L} \frac{1}{b^{\ell}} \sum_{2\ell \le j < b^{\ell/2}} b^{j+\ell} \psi(b^j) - \sum_{L/2 < \ell \le L} \frac{1}{b^{\ell}} \sum_{2\ell \le j < b^{\ell/2}} b^{j+\ell} \psi(b^j) \Id \Big( b^{\ell} \psi(b^j) < \frac{K(\alpha, b)}{b^j} \Big) \ge \\
    \sum_{L/2 < \ell \le L} \sum_{b^{2\ell} < n < b^{b^{\ell/2}}} \psi(n) - \sum_{L/2 < \ell \le L} \frac{1}{b^{\ell}}\sum_{2\ell \le j < b^{\ell/2}} K(\alpha,\beta)\gg L \sum_{b^{2L} < n < b^{b^{L/4}}} \psi(n) - \sum_{L/2 < \ell \le L} \frac{K(\alpha, b)}{b^{\ell/2}} \gg \\
    L \sum_{b^{2L} < n < b^{b^{L/4}}} \psi(n) - O_{\alpha}(1) \gg CL
\end{multline*} for infinitely many $L$ since by Lemma~\ref{about_exp_gap}, 
\[
    \limsup_{L \to \infty} \sum_{b^{2L} < n < b^{b^{L/4}}} \psi(n) \gg C.
\]

\vspace{1ex}

\subsection*{Non-covering case}

Here we may again assume that $\psi(n)$ is constant on the intervals $(b^k,b^{k+1}]$, by the same reasoning as before. Moreover, we may assume that
\[
    \psi(n) \ge \mfrac{1}{n(\log n)^{100}},
\] since establishing non-covering for $\tilde{\psi}(n):=\max\!\left\{\psi(n),\,\mfrac{1}{n(\log n)^{100}}\right\}$ implies non-covering for $\psi(n)$, and since
\[
    \sum_{n\in\bN} \frac{1}{n(\log n)^{100}} < \infty,
\] this does not affect the condition~\eqref{fake_convergence}.

In this case, we partition the set of lengths $\ell_n = \mfrac{\psi(n)}{\norm{n\alpha}}$ into the subsets
\[
    S_{\ell} := \Big\{ n \in \bN: \frac{1}{b^{\ell}} \le \frac{\psi(n)}{\norm{n\alpha}} < \frac{1}{b^{\ell-1}} \Big\}.
\] Note that, since $\alpha \in \Bad$, we have $\norm{n\alpha} > \mfrac{c}{n}$ for some $c = c(\alpha, b) \in (0, 1)$ and all $n \in \bN$. Therefore,
\[
    \frac{c}{n} < \norm{n\alpha} \le \psi(n) b^{\ell} \le \frac{b^{\ell}}{n\log n},
\] implying that $c \log n < b^{\ell}$. Hence, we can write 
\[
    S_{\ell} = \Big\{ n \le b^{c_1 b^{\ell}}: \frac{1}{b^{\ell}} \le \frac{\psi(n)}{\norm{n\alpha}} < \frac{1}{b^{\ell-1}} \Big\},
\] where $c_1 = c_1 (\alpha) > 0$. In this case, our goal is to show that for all $L \in \bN$ one has
\begin{gather*}
    T_1 := \sum_{\ell \le L} \# S_{\ell} \gg \frac{b^L}{L^{100}}, \\
    T_2 := \sum_{\ell \le L} \frac{\# S_{\ell}}{b^{\ell-1}} \ll \eps L.
\end{gather*} Then, for a fixed $n$, choosing $L$ such that $T_1 (L-1) < n \le T_1 (L)$ (and, thus, $\log n \ge \mfrac{L}{2} \log b$) we will obtain
\[
    \sum_{n=1}^{\infty} \frac{1}{n^2} \exp \Big( \sum_{k \le n} \ell_k \Big) \ll \sum_{n=1}^{\infty} \frac{1}{n^2} \exp \Big( \eps \frac{2\log n}{\log b} \Big) < \infty,
\] which will give the desired result by Shepp's theorem.

First, we have
\[
    T_1 \gg \sum_{\ell \le L} \sum_{1 \le j < c_1 b^{\ell}} \# \Big\{ b^j < n \le b^{j+1} : b^{\ell-1} \psi(b^j) < \norm{n\alpha} \le b^{\ell} \psi(b^j) \Big\}.
\] Note that when $j$ is in the range
\[
    \frac{K(\alpha, b)}{b^j} \le \frac{b^{\ell}}{j^{100} b^j} \le b^{\ell} \psi(b^j) \le \frac{b^{\ell}}{j b^j} \le 1,
\] we may apply Corollary~\ref{bohr_estim_cor}. In particular, for the lower bound, we can narrow this range to $\ell - \log_b \ell < j \le b^{\ell/101}$. 

This gives
\[
    T_1 \gg \sum_{\ell \le L} \sum_{\ell - \log_b \ell < j \le b^{\ell/101}} b^{j+\ell} \psi(b^j) \gg \sum_{\ell \le L} b^{\ell} \sum_{\ell - \log_b \ell < j \le b^{\ell/101}} \frac{1}{j^{100}} \gg \frac{b^L}{L^{100}}. 
\]

Finally, combining Corollary~\ref{bohr_estim_cor} with the trivial bound we find
\begin{multline*}
    T_2 \le \sum_{\ell \le L} \frac{1}{b^{\ell-1}} \sum_{1 \le j \le c_1 b^{\ell}} \# \Big\{ b^j < n \le b^{j+1} : \norm{n\alpha} \le b^{\ell} \psi(b^j) \Big\} \ll \\
    \sum_{\ell \le L} \frac{1}{b^{\ell}} \sum_{1 \le j \le \ell - \log \ell} b^j + \sum_{\ell \le L} \frac{1}{b^{\ell}} \sum_{\ell - \log \ell < j \le c_1 b^{\ell}} b^{j+\ell} \psi(b^j) \ll O(1) + \sum_{\ell \le L} \sum_{b^{\ell/2} \le n \le b^{c_1 b^{\ell}}} \psi(n) \ll_b \eps L,
\end{multline*} where the last step follows from~\eqref{fake_khintchine} and the fact that changing the base from $2$ to $b$ in the summation range produces at most a constant factor depending on $b$ (by analogy with Lemma~\ref{about_exp_gap}). This completes the proof.

    \bibliographystyle{abbrv}
    \bibliography{Bib}

    \end{document}